\newcommand{\Z}{\mathbb{Z}}
\newcommand{\R}{\mathbb{R}}
\renewcommand{\O}{\mathcal{O}}
\newcommand{\vuoto}{\varnothing}
\newcommand{\iso}{\cong}
\newcommand{\dual}{^{\vee}}
\newtheorem{thr}{Theorem}[section]
\newtheorem{lmm}[thr]{Lemma}
\newtheorem{prp}[thr]{Proposition}
\newtheorem{crl}[thr]{Corollary}
\theoremstyle{definition}
\theoremstyle{remark}\newtheorem{rmk}[thr]{Remark}
\theoremstyle{remark}
\theoremstyle{remark}
\title{Clemens-Schmid exact sequence in characteristic $p$}
\author{B. Chiarellotto, N. Tsuzuki}
\begin{document}

\maketitle
\abstract{  For a semistable family of varieties over a curve in characteristic $p$, we prove the existence 
of a "Clemens-Schmid type" long exact sequence for the $p$-adic cohomology. 
The cohomology groups appearing  in such a long exact sequence are  defined locally.}

\let\thefootnote\relax\footnotetext { {\bf 2010 Mathematics Subject Classification} Primary:14F30, 32S40. Secondary: 14F35}
\footnote { {\bf Keywords}: Monodromy and Weight filtration, 
Clemens-Schmid exact sequence, Log-crystalline cohomology, Rigid cohomology.}

\section{Introduction}
 
Let $\Delta$ denote an open  disk around $0$ in the complex plane. 
Let $X$ be a smooth complex variety which is a K\"ahler manifold. 
Consider a semi-stable degeneration $\pi : X \rightarrow \Delta$, {\it i.e.}, 
a holomorphic, proper and flat map of relative dimension $n$ 
such that $\pi$ is smooth outside  the fiber $X_0 = \pi^{-1}(0)$ which is, furthemore, assumed 
to be a strict normal crossing divisor 
(in other words $X_0= \sum X_{0,i}$ is a sum of irreducible components $X_{0,i}$ of $X_0$ meeting transversally 
and each  $X_{0,i}$ is smooth). 
In this situation, for any $m$, one can  associate a limit cohomology  $H^m_{\mathrm{lim}}$ for $X_0$ 
(see \cite{mo} or \cite{st}). 
This $H^m_{\mathrm{lim}}$ is endowed with a nilpotent monodromy operator $N$ 
and a weight  filtration from a mixed Hodge structure. 
One has  $H^m(X_t) \simeq H^m_{\mathrm{lim}}$ as vector spaces for $t \ne 0$ where $X_t = \pi^{-1}(t)$; 
moreover, a topological argument shows  that 
$H^m(X) \simeq H^m(X_0):= H^m$ and $H_m(X) \simeq H_m(X_0):=H_m$ as well. 
By  $i$ we will indicate the inclusion $X_t \rightarrow X$. 
Then it is possible to define the Clemens-Schmid exact sequence (respecting MHS) \cite{cle} Chap. 1, 3.7 (see also \cite{mo}):

\begin{equation*} 
\cdots \rightarrow  H_{2n+2-m} \xrightarrow{\alpha} H^m  \xrightarrow{i^{\ast}} H^m_{\mathrm{lim}} \xrightarrow{N} 
H^m_{\mathrm{lim}} \xrightarrow{\beta} H_{2n-m} \xrightarrow{\alpha}  H^{m+2} \rightarrow \cdots, 
\end{equation*}

\noindent 
where the maps $\alpha$  are the natural maps arising from the Poincar\'e duality for $X_0$ considered as closed 
in the smooth variety $X$, and the maps $\beta$ 
are again  obtained via Poincar\'e duality for $H^m(X_t) \simeq H_{2n-m}(X_t)$ 
 composed with the natural map $H_{2n-m}(X_t) \rightarrow  H_{2n-m}(X)$ dual to $i^{\ast}$.

In order to  prove the exactness of this long  sequence one needs more than a  topological argument 
which  connects a global definition of the cohomology of $X$ with support in $X_0$ to a sequence involving the cohomology 
of the special and generic fibers. Indeed one  also needs a ``{\it weights}" argument. 
In fact one has to use the fact that the sequence respects the  weight filtrations of mixed Hodge structures of the vector spaces 
involved and moreover that the weight and monodromy  filtrations  on $H_{\mathrm{lim}}^n$ coincide. 
We recall, also, that the structure of the limit cohomology has been considered 
in the framework of  log-geometry (see \cite{il}).

\bigskip

In this article we deal with the analogous  situation in  characteristic  $p\! >0$. Namely, we  consider the following morphism

\begin{equation*}
	f: X \rightarrow C \ 
	\end{equation*}

\noindent 
over a finite field $k$ of characteristic $p$, where $X$ is a smooth variety of dimension $n+1$, $C$ is a smooth curve and 
$f$ is a proper and flat  morphism. We suppose  that for  a $k$-rational point $s$  of $C$, the fiber at $s$ of $f$, $X_s$, 
is a normal crossing divisor (NCD for short) and $f$ is smooth outside $X_s$. 
Let $\cal V$ be a complete and absolutely unramified 
discrete valuation ring of mixed characteristic whose residue (resp. fraction) field is $k$ (resp.  $K$). 
Then we prove the existence of a Clemens-Schmid sequence:
\begin{eqnarray} \label{sequenceA}
\\
\cdots &\rightarrow& H^m_{\rm rig}(X_s) \xrightarrow{\gamma} 
H^m_{\rm log-crys}((X_s,M_s)/{\cal V}^\times) \otimes K  \xrightarrow{N_m} 
H^m_{\rm log-crys}((X_s,M_s)/{\cal V}^\times) \otimes K(-1) \xrightarrow{\delta} H^{m+2}_{X_s,{\rm rig}}(X) \nonumber \\
&\xrightarrow{\alpha}& H^{m+2}_{\rm rig}(X_s) \rightarrow \cdots,  \nonumber
\end{eqnarray}
where $M$ is a log-structure on $X$ associated to the NCD $X_s$ (in \'etale topology), 
$M_s$ is the fiber of $M$ at $s$, and $(a)$ means the $a$-th Tate twist of Frobenius structure.

The role of the  limit cohomology will be played by the log-crystalline cohomology of the log-scheme $X_s$ 
endowed with the log-structure $M_s$ induced by the log-structure $M$ of $X$ given by the NCD $X_s$ itself.
We denote this limit cohomology by $H^m_{\rm log-crys}((X_s,M_s)/{\cal V}^\times)$ 
($\mathcal V^\times$ is endowed with the log-structure on $\mathcal V$ associated to $1 \mapsto 0$). 
We will  then consider  the  cohomology of the special fiber $X_s$ without any structure: 
and here we will apply rigid cohomology, $ H^m_{\mathrm{rig}}(X_s)$.
We now need to replace the "trascendental" topological argument used  to construct such a sequence.  
The underlying idea is that the bridge between the local ({\it i.e.}, based 
on the special fiber) and the global ({\it i.e.}, based on the whole geometric 
datum) will be given by two different definitions 
of the  cohomology of $X$ with support  in $X_s$, $H^m_{X_s, \mathrm{rig}}(X)$. 
Moreover  we will link it to the cohomology of the open complement of $X_s$ in $X$ 
which  will be understood in the framework 
of a generalized log-convergent cohomology theory introduced by Shiho. Hence  we will  have a long exact sequence

\begin{equation} \label{seqsupp}
\cdots \longrightarrow H^m_{X_s, \mathrm{rig}}(X)\,
\displaystyle{\mathop{\longrightarrow}^\alpha}\, H^m_{\mathrm{rig}}(X_s)\, 
\longrightarrow\, H^m_{ \mathrm{log-conv}}((X_s, M_s)/\mathcal V)\, \longrightarrow \cdots. 
\end{equation}

 \noindent
 Furthermore  the (absolute) log-convergent cohomology groups, $H^m_{\mathrm{log-conv}}((X_s, M_s)/\mathcal V)$, 
 will be linked to the limit cohomology of the special fiber via the following long exact sequence

 \begin{equation} \label{rel6}
\dots \rightarrow H^m_{\rm log-conv}((X_s, M_s)/{\cal V}) \rightarrow 
H^m_{\rm log-crys}((X_s,M_s)/{\cal V}^\times) \otimes K \xrightarrow{N_m} 
H^m_{\rm log-crys}((X_s,M_s)/{\cal V}^\times) \otimes K(-1) \rightarrow \cdots,
\end{equation}

\noindent 
because we will interpretate the monodromy in terms of Gauss-Manin connection. 
Then we can merge the two sequences \eqref{seqsupp} and \eqref{rel6} to obtain the 
$p$-adic analogue of the Clemens-Schmid sequence \eqref{sequenceA} as in the classical case discussed in \cite{cle}.

For the exactness of the merged  sequence we note that the log-crystalline cohomology of $X_s$ admits  a weight  structure 
(coming from the Frobenius action) and the existence of a monodromy operator. Following the work of Shiho,  
we will insert our  cohomology into a family.  
Moreover to such a family
we will associate a differential operator having  a regular  singular point at $s$ and endowed with a Frobenius structure. 
We will re-interpret the monodromy operator in terms of residue of the differential operator at $s$.  
In this differential setting  the equivalence between the  monodromy and weight filtration (given by Frobenius) 
has  been proved by Crew \cite{crew} 10.8. Therefore, we can prove the exactness of 
the Clemens-Schmid type sequence by weight arguments. 

We also have Poincar\'e duality in the rigid setting obtaining
$$ 
H^{m+2}_{X_s,{\rm rig}}(X)\iso H_{c,{\rm rig}}^{2n+2-m-2}(X_s)\dual(-\mathrm{dim}\, X) 
\iso H_{2n-m}^{\rm rig}(X_s)(-\mathrm{dim}\, X) 
$$ 
\cite{ls}, \cite{Pet:Cla03}, \cite{Ber:Dua97}: hence \eqref{sequenceA} 
is the complete analogue of the classical Clemens-Schmid exact sequence.

In this paper we will prove the exactness of the Clemens-Schmid type sequence when  $k$ is a finite field 
because we would like to avoid the difficulty of building up the relative theory. 
We believe, however, that it is possible to remove the finiteness  hypothesis. 
We do not treat the problem of the exactness for a  proper semistable family defined over ${\rm Spec}\, k[[t]]$.

It is tempting  to try to see our procedure  along the lines of Levine's article on  motivic tubular neighborhoods  \cite{le}. 
In that article  topological methods were replaced by the notion   of tubular neighborhoods. In our  $p$-adic realization,  
this corresponds  to the use of the tubes in characteristic 0 for our varieties in characteristic $p$. 
With respect to his approach, in our ``realization''  we have the advantage of a  weight filtration 
which can be  compared  to the monodromy  filtration and we can prove the exactness of our Clemens-Schmid sequence. 
We also mention that Nakkajima studied the kernel of the monodromy operator 
in crystalline settings in \cite{na}, Sect. 6. 

Here is an outline of this paper.  After  establishing our notations and conventions, we will show in section 3, 
how Shiho's theory of relative log-cohomology (log-crystalline, log-convergent and log-analytic) can be used in our setting. 
In particular we will understand the log-crystalline cohomology of the special fiber $X_s$ as a fiber at $s$ 
of the sheaves of relative log-cohomology on $C$ which are endowed with a convergent connection, 
whose residue at  $s$ will be the monodromy operator. In the section 4 we will construct the sequence.  
We will  show  how  the global long exact sequence should  be defined using ``local" objects via tubes:  
this will replace the topological methods in the classical case. 
To do that we will need to compare and link several cohomology theories: 
these  results will be obtained by choosing a good embedding system.  
In fact, in \cite{shiho:fund2} 2.2.4, Shiho gave a functorial definition for  a log tubular neighborhood 
$]X[^{\mathrm{log}}_{\mathscr P}$ arising from a (not exact, in general) closed immersion $X \rightarrow \mathscr P$ 
of  the log $k$-scheme $X$  to the formal log $\mathcal V$-scheme $\mathscr P$ (under some assumptions), 
by which he is able to link log-crystalline cohomology with log-convergent (log-analytic) cohomology. 
For our purpose the factorial construction of log tubular neighborhoods is not enough, 
and we need a good embedding system in the level of log formal schemes for certain \'etale hypercoverings 
which enjoys smoothness, log-smoothness and exactness properties in order to link log-convergent 
(log-analytic) cohomology with rigid cohomology. 
This is done in Propositions \ref{emb} and \ref{emb2}. 
In section 5 we will  prove the last ingredient for the exactness of the Clemens-Schmid sequence: 
the  monodromy filtration coincides with the weight filtration for the log-crystalline cohomology of $X_s$. 
This will be proved using the theory of the third section: namely the fact that we  may view this cohomology as a special fiber at $s$ 
of a module endowed with a log-connection and a Frobenius structure  on the curve (with a log-structure given by the special point) 
and with  monodromy  given by the residue of this differential module at that special point (hence the monodromy is  unipotent). 
In this sense  we will use Crew's  results on the equivalence of  two filtrations \cite{crew} \S10, 
which, in turn, was an adaptation of Deligne's methods for the \'etale setting \cite{de} 1.8.4.

\paragraph*{Acknowledgments}

We would like to thank A. Shiho and A. Lauder.  Lauder's article \cite{laud} was the starting point for this article. 
The first author was supported by Cariparo Eccellenza Grant "Differential methods in algebra, geometry and arithmetic" 
and was also supported by a JSPS grant.  
The second author was supported by a JSPS  Grant-in-Aid for research (B) 22340001. 
We thank Frank Sullivan. 

\section{Notation and setting} 

In this paper  we will indicate by $k$ a perfect field of characteristic $p>0$, if not otherwise indicated.
For simplicity we denote by  $\cal V$  the ring of Witt vectors of  $k$, $K$ its fraction field. 
The Frobenius is denoted by $\sigma$. We put ${\cal V}_i = {\cal V}/p^i{\cal V}$. 
Of course one could have taken for $\cal V$ any complete discrete valuation 
ring with residue field $k$ and ${\rm Frac}( {\cal V})=K$:  
but all the  $K$-cohomology groups we are going to consider 
in this case will be defined by tensoring with $K$ the cohomology groups defined 
over the fraction field of the ring of  Witt vectors of $k$. 
Hence the ramification does not trouble our constructions and results.  

We recall that a divisor $Z \subset Y$ of a Noetherian scheme is said to be a strict normal crossing divisor (SNCD) 
if  $Z$ is a reduced scheme and, if $Z_i , i \in J$ 
are the irreducible components of $Z$, then, for any $I \subseteq J$ (which might be empty) 
the intersection $Z_I= \cap _{i \in I}\, Z_i$ 
is a regular scheme of codimension 
the number of elements of $I$ (or it may be empty). Moreover $Y$ is said to be a normal crossing divisor (NCD) 
if, \'etale locally on $Y$, it is an SNCD.

We consider the following morphism

\begin{equation*} 
	f: X \rightarrow C 
\end{equation*}

\noindent
over a field $k$, where $X$ is a smooth variety of dimension $n+1$,  $C$ is a smooth curve and $f$ is a proper and flat morphism. 
We suppose that, for a $k$-rational point $s$ of $C$,  the fiber at $s$ of $f$, $X_s$, is an NCD  
in $X$ and $f$ is smooth on  $X \setminus X_s$. 
We use  $(X, M)$ to denote  the scheme  $X$ endowed with the log-structure given by the NCD, $X_s$, 
while $(C, N)$ denotes  the curve $C$ endowed with the log-structure $N$ given by $s$ (all for the \'etale topology). 
The induced  map $f : (X, M) \rightarrow (C, N)$ is log-smooth and proper. Then $s^{\times}$ is a log-point given 
by the $k$-rational point $s$ of $(C, N)$, {\it i.e.}, the induced log-structure given 
by the closed immersion $s \rightarrow C$ from $(C, N)$.
We refer to such a situation by a cartesian diagram

\begin{equation} \label{diag1}
\xymatrix{
(X_{s}, M_s)  \ar[d] \ar[r]&  (X, M)\ar[d]^{f}\\
  s^{\times} \ar[r] & (C, N).}
\end{equation}

\noindent
By $\mathcal V^\times$ we indicate $\mathcal V$ endowed with the log-structure associated to $\mathbb N \ni 1 \mapsto 0 \in \cal V$. 
Again $\mathcal V$ will indicate $\mathcal V$ endowed with the trivial log-structure. 

If we say a property is satisfied by a simplicial (formal) scheme (resp. a morphism 
of simplicial (formal) schemes), then we mean it is  satisfied 
at each level of the simplicial (formal) scheme (resp. the morphism 
of simplicial (formal) schemes). 

By $[A^{\bullet} \rightarrow B^{\bullet}]$, 
we understand the simple complex  associated  to the   double one  (also in the simplicial setting) 
for complexes $A^{\bullet}$, $B^{\bullet}$.

\section{Relative Cohomology}
Because in \eqref{diag1},  $C$ was  a smooth  curve over $k$ then it admits a smooth lifting ${C}_{\cal V}$ 
over $\cal V$ (7.4 III, SGA1): 
we indicate by $\mathscr C$ its completion along the special fiber $C$. 
Let us fix a lift $\hat s$ of $s$ in $C_{\cal V}$ and a section $t$  as a local coordinate of $\hat s$ in $C_{\cal V}$ over $\cal V$. 
$\hat s$ and $t$ also denote a lift of $s$ in $\mathscr C$ and a local coordinate 
 of $\hat s$ in $\mathscr C$ over $\cal V$, respectively. 
Then $1 \mapsto t$ defines a log-structure on $\mathscr C$ 
and we indicate it by ${\mathscr N}$. 
After shrinking $C$ it is also possible to endow $(\mathscr C, {\mathscr N})$ 
with a lift $\sigma_{\mathscr C}$ of Frobenius which is compatible with the Frobenius $\sigma$ on $\cal V$. 
We then have a sequence of exact closed immersions of log-schemes

\begin{equation} \label{map2bis} s^{\times}  \rightarrow (C, N) \rightarrow (\mathscr C, {\mathscr N}). 
\end{equation}

\noindent 
The log-scheme $(C, N)$ in \eqref{map2bis} is log-smooth over $k$, where $k$ is endowed with the trivial log-structure, 
and the formal log-scheme $(\mathscr C, {\mathscr N})$ in \eqref{map2bis} is log-smooth over $\mathcal V$. 
We will denote the reduction of $(\mathscr C, \mathscr N)$ 
modulo $p^i$ by $({\mathscr C}_i, {\mathscr N}_i)$. 
We will indicate by ${\mathscr C}_{K}$ (resp. $({\mathscr C}_{K}, {\mathscr N}_K))$ the rigid analytic space associated 
to the generic fiber of $\mathscr C$ (resp. with the log-structure ${\mathscr N}_K$ induced by $\mathscr N$), and denotes by $\hat{s}_K$ 
a point of $\mathscr C_K$ defined by $t = 0$.

As in section 2 we may induce on $X_{s}$ the log-structure of $(X,M)$ and we refer to it as $(X_s, M_s)$. 
Again  $(X, M)$ is log-smooth over $k$ endowed with the trivial log-structure 
while $(X_s, M_s)$ is log-smooth over $s^{\times}$. 
Then we have the following diagram:	
 
\begin{equation} \label{diag2}
\xymatrix{
(X_{s},M_s)  \ar[d]^{f_{s}} \ar[r]&    (X,M)\ar[d]^{f} &\\
  s ^{\times} \ar[r] & (C, N) \ar[r] &(\mathscr C, {\mathscr N}). }
\end{equation}
\noindent 
In this setting Kato and, later, Shiho (\cite{Ka:log}, \cite{shiho:rel1}, \cite{shiho:rel2}) 
were able to define the {\it log-crystalline cohomology sheaves of} 
$(X,M)/(C, N)$ {\it with respect to} $(\mathscr C, \mathscr N)$, 
and we will indicate them by 

\begin{equation*}
	\R^{m }f_{(X,M) / (\mathscr C, {\mathscr N}), {\rm crys} \ast } ({\O}_{\mathrm{crys}, X,K}).
\end{equation*}

\noindent
In this paper we will only work with the trivial log-isocrystal ${\O}_{\mathrm{crys}, X,K}$. 
In order to define the log-crystalline cohomology one needs to fix a Hyodo-Kato 
embedding system $({\mathscr P}_{\bullet}, {\mathscr M}_{\bullet})$ 
of an \'etale hypercovering $(X_\bullet, M_\bullet)$ of $(X, M)$ \cite{hy-ka} 2.18 
(see the notion of simplicial (formal) schemes and \'etale hypercovering in \cite{ct}, \cite{ts}):
\begin{equation}
\label{HKsystem}
\xymatrix{
 (X_{s,\bullet}, M_{s,\bullet}) \ar[d]^{\theta_s} \ar[r]& (X_{\bullet}, M_{\bullet}) \ar[d]^{\theta} \ar[r]^{i_{\bullet}}
 &  ({\mathscr P}_{\bullet}, {\mathscr M}_{\bullet})  \ar[d]^g   \\
(X_s, M_s) \ar[r] \ar[d]^{f_s} & (X,M) \ar[d]^f &   \ar[d] \\
 s^{\times} \ar[r] &(C, N) \ar[r] & (\mathscr C, {\mathscr N})} 
\end{equation}
where 
\begin{list}{}{}
\item[(i)] $\theta : X_\bullet \rightarrow X$ is an \'etale hypercovering such that the inverse image of $X_s$ 
is an SNCD in $X_\bullet$, $M_\bullet$ is the log-structure on $X_\bullet$ 
induced by $M$, 
and $\theta_s : (X_{s, \bullet}, M_{s, \bullet}) \rightarrow (X_s, M_s)$ is an induced \'etale hypercovering 
by base change;
\item[(ii)] $(\mathscr P_\bullet, {\mathscr M}_{\bullet})$ is a simplicial formal log-scheme separated of finite type 
over $(\mathscr P_\bullet, {\mathscr M}_{\bullet})$ with Frobenius endomorphism which extends 
the Frobenius endomorphism $\sigma_{\mathscr C}$ on $\mathscr C$ 
such that the log-structure ${\mathscr M}_{\bullet}$ comes from that on the Zariski topology of $\mathscr P_\bullet$, 
and that $(\mathscr P_{\bullet}, {\mathscr M}_{\bullet})$ 
is log-smooth over $(\mathscr C, {\mathscr N})$;
\item[(iii)] $i_\bullet $ is a closed immersion of simplicial formal log-schemes (not necessary exact). 
\end{list}

\noindent
In this situation the  {\it crystalline complex} 
$C_{(X_\bullet, M_\bullet)/(\mathscr C_i, \mathscr N_i)}$ on the Zariski site on $X_\bullet$ 
can be defined by the logarithmic de Rham complex of the log-PD-envelope of the closed immersion 
$i_\bullet : (X_\bullet, M_\bullet) \rightarrow 
(\mathscr P_\bullet, \mathscr M_\bullet)$ over $(\mathscr C_i, \mathscr N_i)$ (see \cite{Ka:log} 6.2, 6.4). 
Although the log-structures are defined on the \'etale sites, it is a complex of coherent sheaves 
on the Zariski site on $X_\bullet$ by our choice of the \'etale hypercovering and the embedding system. 
If $f_{(X, M)/(\mathscr C, \mathscr N), {\rm crys}}$ is the natural composite 
$$
      \left((X, M)/(\mathscr C, {\mathscr N})\right)_{\mathrm{crys}}^{\mathrm{log}}\, \longrightarrow\, 
      X_{\mathrm{Zar}}\, \longrightarrow\, 
      {\mathscr C}_{\mathrm{Zar}}
$$
where $\left((X,M)/(\mathscr C, {\mathscr N})\right)_{\mathrm{crys}}^{\mathrm{log}}$ is the log-crystalline site of 
$(X,M)/(\mathscr C, {\mathscr N})$ and $X_{\mathrm{Zar}}$ 
(resp. ${\mathscr C}_{\mathrm{Zar}}$) 
is the Zariski site of $X$ (resp. $\mathscr C$), then the log-crystalline cohomology is calculated by 
$$
     \R f_{(X,M) / (\mathscr C, {\mathscr N}), {\rm crys} \ast} ({\O}_{\mathrm{crys}, X,K}) \iso
      K \otimes \mathbb R\varprojlim_i\, 
     \R(f\theta)_\ast C_{(X_\bullet, M_\bullet)/(\mathscr C_i, \mathscr N_i)}. 
$$
Since $f$ is proper and log-smooth, Shiho proved the following proposition (claim in the proof of \cite{shiho:rel1} 1.15). 
See also \cite{BO78} B.9 for $\mathbb R\varprojlim$. 

\begin{prp}\label{consist} Under the notation above, 
\begin{list}{}{}
\item[\mbox{\rm (1)}] $\R(f\theta)_\ast C_{(X_\bullet, M_\bullet)/(\mathscr C_i, \mathscr N_i)} 
\otimes^{\mathbb L}_{\O_{\mathscr C_i}} \O_{\mathscr C_{i-1}}\, \cong\, 
\R(f\theta)_\ast C_{(X_\bullet, M_\bullet)/(\mathscr C_{i-1}, \mathscr N_{i-1})}$ for any $i$. 
\item[\mbox{\rm (2)}] $\R(f\theta)_\ast C_{(X_\bullet, M_\bullet)/(\mathscr C_i, \mathscr N_i)}$ 
is bounded and has finitely generated cohomologies for any $i$.
\end{list}
In particular, the projective system $\{ \R^m(f\theta)_\ast C_{(X_\bullet, M_\bullet)/(\mathscr C_i, \mathscr N_i)} \}_i$ 
satisfies the Mittag-Laffler condition for each $m$. 
\end{prp}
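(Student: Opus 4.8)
The plan is to reduce both assertions to Kato's structure theory of log-PD-envelopes together with the base-change theorem for proper pushforward of complexes of finite Tor-dimension. Throughout write $C_i$ for $C_{(X_\bullet,M_\bullet)/(\mathscr C_i,\mathscr N_i)}$; by construction it is a bounded complex on $X_{\bullet,\mathrm{Zar}}$ with coherent $\O_{X_\bullet}$-terms, namely the logarithmic de Rham complex of the log-PD-envelope $\mathscr D^{(i)}_\bullet$ of $i_\bullet$ over $(\mathscr C_i,\mathscr N_i)$. The first input is flatness: since $(\mathscr P_\bullet,\mathscr M_\bullet)$ is log-smooth over $(\mathscr C,\mathscr N)$, Kato's flatness theorem for log-PD-envelopes (\cite{Ka:log}) gives that $\mathscr D^{(i)}_\bullet$ is flat over $\O_{\mathscr C_i}$ and that its formation commutes with the base change $\mathscr C_{i-1}\hookrightarrow\mathscr C_i$, whence $\mathscr D^{(i)}_\bullet\otimes_{\O_{\mathscr C_i}}\O_{\mathscr C_{i-1}}\cong\mathscr D^{(i-1)}_\bullet$. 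As the log differentials of $\mathscr P_\bullet/\mathscr C$ are locally free over $\O_{\mathscr P_\bullet}$, it follows that each $C_i$, viewed through $f\theta$, is a bounded complex of $\O_{\mathscr C_i}$-flat sheaves, and that $C_i\otimes^{\mathbb L}_{(f\theta)^{-1}\O_{\mathscr C_i}}(f\theta)^{-1}\O_{\mathscr C_{i-1}}\simeq C_{i-1}$ (the derived tensor product being underived, by flatness).

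For (2), I would write $\R(f\theta)_\ast=\R f_\ast\circ\R\theta_\ast$. Cohomological descent for the hypercovering $\theta$ identifies $\R\theta_\ast C_i$ with the log-crystalline complex of $(X,M)/(\mathscr C_i,\mathscr N_i)$, which — computed Zariski-locally on $X$ by the log de Rham complex of the PD-envelope of a local exact embedding of $X$ into a log-smooth formal $(\mathscr C,\mathscr N)$-scheme — is manifestly a bounded complex with coherent cohomology; hence $\R\theta_\ast C_i\in D^b_{\mathrm{coh}}(X)$. Since $f$ is proper and $\mathscr C_i$ is a Noetherian scheme, $\R(f\theta)_\ast C_i=\R f_\ast(\R\theta_\ast C_i)$ lies in $D^b_{\mathrm{coh}}(\mathscr C_i)$, which is exactly (2); the bounded range of $m$ is controlled by the amplitude of $C_i$ and the (finite) cohomological dimensions of $\theta$ and $f$.

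For (1), the previous two paragraphs say precisely that $C_i$ is an $\O_{\mathscr C_i}$-flat bounded complex with coherent terms pushed forward along the proper morphism $f\theta$ to the Noetherian base $\mathscr C_i$. By the base-change theorem for such complexes (e.g.\ \cite{BO78}, or classically by reducing — via cohomological descent for a finite truncation of $X_\bullet$ and a finite affine cover — to cohomology and base change for a finite complex of flat quasi-coherent sheaves), $\R(f\theta)_\ast C_i$ is a perfect complex of $\O_{\mathscr C_i}$-modules whose formation commutes with arbitrary base change. Applying base change along $\mathscr C_{i-1}\hookrightarrow\mathscr C_i$ and feeding in the identification $C_i\otimes^{\mathbb L}_{(f\theta)^{-1}\O_{\mathscr C_i}}(f\theta)^{-1}\O_{\mathscr C_{i-1}}\simeq C_{i-1}$ yields the isomorphism of (1). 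The Mittag-Leffler assertion is then the standard consequence (cf.\ \cite{BO78} B.9): (1) forces the kernels and cokernels of the transition maps of $\{\R^m(f\theta)_\ast C_i\}_i$ to be subquotients of the coherent modules $\R^m(f\theta)_\ast C_i$ (all $m$), and Noetherianity of $\mathscr C_i$ makes the images stabilize.

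The routine-looking steps conceal two points requiring care. The main obstacle is the boundedness of $\R\theta_\ast C_i$: a priori the simplicial hypercovering contributes in arbitrarily high degree, so one genuinely needs cohomological descent (equivalently, independence of the embedding system) in order to recognize $\R\theta_\ast C_i$ as the intrinsic — and bounded — log-crystalline complex of $X$. Secondly, in (1) one cannot invoke a naive projection formula for $\otimes^{\mathbb L}_{\O_{\mathscr C_i}}\O_{\mathscr C_{i-1}}$, because $\O_{\mathscr C_{i-1}}$ is not a perfect $\O_{\mathscr C_i}$-module; this is exactly why the argument must route through the perfectness of $\R(f\theta)_\ast C_i$, which in turn rests on the $\O_{\mathscr C_i}$-flatness of $C_i$ established in the first paragraph.
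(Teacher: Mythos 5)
Your write-up is essentially a correct reconstruction of the intended argument, but note that the paper itself does not prove this proposition: it is quoted from Shiho (the claim inside the proof of \cite{shiho:rel1} 1.15), with \cite{BO78} B.9 invoked for the $\mathbb R\varprojlim$/Mittag--Leffler side. So you are necessarily taking a different route, namely actually assembling the proof: flatness and base-change compatibility of the log-PD-envelope (Kato \cite{Ka:log}, the mechanism behind 6.10 --- observe that this step uses that $(X,M)\to (C,N)$ is log-smooth \emph{and integral}, which holds here because the semistable log structure has charts $\mathbb N\to\mathbb N^r$, $1\mapsto(1,\dots,1)$; you should say this), \'etale/crystalline descent identifying $\R\theta_\ast C_{(X_\bullet,M_\bullet)/(\mathscr C_i,\mathscr N_i)}$ with the intrinsic log-crystalline complex of $(X,M)/(\mathscr C_i,\mathscr N_i)$ (which is also what rescues properness, since the individual levels $X_n\to C$ of the hypercovering are only \'etale over $X$, not proper over $C$), EGA-type finiteness for the proper $f$, and cohomology-and-base-change for a bounded $\O_{\mathscr C_i}$-flat complex. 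What this buys is an actual argument, with the hypotheses (properness, log-smoothness, integrality, Zariski-type embedding system) made visible, in place of the paper's pointer; the cost is that each ingredient is itself a cited theorem, so in substance your proof and Shiho's coincide.

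One justification is wrong as stated: for the Mittag--Leffler conclusion you argue that the images of the transition maps stabilize because they are subquotients of coherent modules over a Noetherian base. Decreasing chains of coherent submodules need not stabilize --- $\mathscr C_i$ is a curve over $\mathcal V_i$, not Artinian; compare $(t^n)\subset k[t]$ --- so Noetherianity alone gives nothing. The stabilization genuinely uses the derived base-change isomorphisms of (1) along the whole tower together with the finiteness of (2); that combination is exactly the content of \cite{BO78} B.9, which you already cite, so the fix is simply to invoke that lemma (as the paper does) instead of the chain argument. With that replacement the proposal is correct.
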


\vspace*{3mm}

\begin{rmk} In section 4 we construct an embedding system such that $i_\bullet$ 
is an exact closed immersion. In this section we do not need such a special embedding system.
\end{rmk}

We recall the notion of iso-coherent sheaves in \cite{shiho:rel1} 1.2. 
The category of iso-coherent sheaves on the Zariski site of a $p$-adic formal scheme 
$\mathcal S$ 
is the category such that an object has a form $\mathcal F \otimes \mathbb Q$ 
for a coherent $\O_{\mathcal S}$-module $\mathcal F$ 
and the  group of homomorphisms is given by 
$\mathrm{Hom}({\cal F}\otimes \mathbb Q, {\cal G}\otimes \mathbb Q) 
= \mathrm{Hom}_{\mathrm{Coh}}({\cal F}, {\cal G}) \otimes \mathbb Q$, where 
$\mathrm{Hom}_{\mathrm{Coh}}({\cal F}, {\cal G})$ 
is the abelian group of homomorphisms as coherent sheaves. 

\begin{thr}\label{relcrys1} Under these hypotheses on $(X, M)$, 
$\R f_{(X,M) / (\mathscr C, {\mathscr N}), {\rm crys} \ast } ({\O}_{\mathrm{crys}, X,K})$ 
is a perfect complex of iso-coherent sheaves on $\mathscr C$. 
Moreover, the iso-coherent cohomology sheaf 
$\R^mf_{(X,M)/(\mathscr C, {\mathscr N}), {\rm crys}\ast}({\O}_{\mathrm{crys}, X,K})$ 
admits a Frobenius structure for each $m$.
\end{thr}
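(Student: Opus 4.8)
The plan is to deduce the statement from the two assertions of Proposition \ref{consist}. The key input is that $\R(f\theta)_\ast C_{(X_\bullet, M_\bullet)/(\mathscr C_i, \mathscr N_i)}$ is, for each $i$, a bounded complex with finitely generated cohomology sheaves on the Noetherian scheme $\mathscr C_i$, and that these are compatible under derived reduction modulo $p$. First I would recall the standard fact (see \cite{BO78} B.9, or the analogous statement in \cite{shiho:rel1} 1.3--1.5) that, for a projective system $\{K_i\}$ of complexes of $\O_{\mathscr C_i}$-modules satisfying the derived compatibility in Proposition \ref{consist}(1) together with the boundedness and finiteness in Proposition \ref{consist}(2), the object $K\otimes\Q$, where $K := \R\varprojlim_i K_i$, is a perfect complex of iso-coherent sheaves on $\mathscr C$. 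The point is that, since $\mathscr C$ is a Noetherian $p$-adic formal scheme admitting locally a presentation by finite free modules, a complex of iso-coherent sheaves is perfect exactly when its cohomology sheaves are iso-coherent and it is of finite Tor-amplitude; both follow from Proposition \ref{consist}: boundedness of amplitude is uniform in $i$ by part (2), and iso-coherence of $\R^m K$ follows because, by the Mittag-Leffler conclusion of Proposition \ref{consist}, $\R^m\varprojlim_i \R^m(f\theta)_\ast C_{\bullet/(\mathscr C_i,\mathscr N_i)} \;=\; \varprojlim_i \R^m(f\theta)_\ast C_{\bullet/(\mathscr C_i,\mathscr N_i)}$, the higher $\varprojlim$ vanish, and the resulting module tensored with $K$ is finitely generated over $\O_{\mathscr C}\otimes\Q$. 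This identifies $\R f_{(X,M)/(\mathscr C,\mathscr N),\mathrm{crys}\ast}(\O_{\mathrm{crys},X,K})$ with $K\otimes\Q$ and gives the first assertion.

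For the Frobenius structure, I would use that the embedding system $(\mathscr P_\bullet, \mathscr M_\bullet)$ in \eqref{HKsystem} was chosen to carry a Frobenius endomorphism extending $\sigma_{\mathscr C}$ on $\mathscr C$, compatibly with $\sigma$ on $\mathcal V$; concretely, condition (ii) in the construction of the Hyodo-Kato embedding system provides such a lift of Frobenius at each simplicial level. This induces a morphism of crystalline complexes covering the absolute Frobenius, hence a morphism
$$
\sigma^\ast\, \R(f\theta)_\ast C_{(X_\bullet, M_\bullet)/(\mathscr C_i, \mathscr N_i)} \;\longrightarrow\; \R(f\theta)_\ast C_{(X_\bullet, M_\bullet)/(\mathscr C_i, \mathscr N_i)}
$$
in the derived category, compatible in $i$; passing to $\R\varprojlim_i$ and tensoring with $K$ gives the $\sigma$-linear endomorphism $\Phi$ on $\R f_{(X,M)/(\mathscr C,\mathscr N),\mathrm{crys}\ast}(\O_{\mathrm{crys},X,K})$ and on each cohomology sheaf $\R^m$. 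That $\Phi$ is an isomorphism (i.e. a genuine Frobenius \emph{structure}, not merely an endomorphism) I would prove by the usual base-change argument: the formation of the log-crystalline complex is independent of the choice of embedding system up to canonical quasi-isomorphism, and comparing the given embedding system with its Frobenius-twist shows $\Phi$ is an isomorphism after inverting $p$, exactly as in the Hyodo-Kato setting \cite{hy-ka} and in \cite{shiho:rel1}. Alternatively, one can check the isomorphism fibrewise after the comparison theorems of section 3, reducing to the known case that log-crystalline cohomology of a log-smooth proper log-scheme carries a bijective Frobenius.

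The main obstacle is the first part: one must be careful that ``perfect complex of iso-coherent sheaves'' is the right notion and that the $\R\varprojlim$ genuinely produces it. The subtlety is that $\R\varprojlim$ of a system of perfect complexes on the $\mathscr C_i$ need not be perfect on $\mathscr C$ in general; what saves us is precisely the \emph{derived} compatibility $\R(f\theta)_\ast C_{\bullet/(\mathscr C_i,\mathscr N_i)}\otim^{\mathbb L}_{\O_{\mathscr C_i}}\O_{\mathscr C_{i-1}}\cong \R(f\theta)_\ast C_{\bullet/(\mathscr C_{i-1},\mathscr N_{i-1})}$ from Proposition \ref{consist}(1), which lets one represent the whole system, locally on $\mathscr C$, by a single bounded complex of finite free $\O_{\mathscr C}$-modules (reduce mod $p^i$); the uniform bound on amplitude from part (2) is what makes this possible. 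Once that local presentation is in hand, iso-coherence and perfectness are immediate, the Frobenius is transported along it, and the statement follows. I would expect this argument to occupy essentially one paragraph once the $\R\varprojlim$ bookkeeping from \cite{BO78} B.9 is invoked.
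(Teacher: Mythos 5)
Your proposal is correct in substance, but where the paper disposes of the theorem in two lines of citation, you unpack both citations, so it is worth comparing. For perfectness the paper simply invokes \cite{shiho:rel1} 1.16; you instead rederive it from Proposition \ref{consist} by the $\mathbb R\varprojlim$ argument of \cite{BO78} B.9--B.10, which is indeed how Shiho's result is proved, so the route is legitimate and more self-contained. One imprecision: the finite Tor-amplitude you need does \emph{not} come from Proposition \ref{consist}(2) alone (the $\mathscr C_i$ are non-reduced for $i\ge 2$, so ``bounded with coherent cohomology'' does not give perfectness there); it comes from the derived compatibility of part (1), which lets you descend to the level $i=1$ complex on the regular curve $C$, where boundedness plus coherence does give perfectness, and then lift a local finite free resolution through the tower --- this is exactly the mechanism of the Berthelot--Ogus lemma, and your sketch should make that reduction explicit. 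For the Frobenius, the paper's point is that $f$ is of Cartier type, so \cite{hy-ka} 2.24 gives the Frobenius \emph{structure} (i.e.\ bijectivity after inverting $p$); your construction of the $\sigma$-linear endomorphism from the Frobenius lift on $(\mathscr P_\bullet,\mathscr M_\bullet)$ is fine, but bijectivity is not formal from the existence of that lift --- it is precisely the Hyodo--Kato theorem, so your phrase ``exactly as in the Hyodo--Kato setting'' is doing the same work as the paper's citation and should name the Cartier-type hypothesis. Finally, beware of your proposed alternative of checking bijectivity fibrewise: local freeness of the cohomology sheaves (Theorem \ref{relan}, Corollary \ref{locfree}) is itself deduced in the paper \emph{from} the Frobenius structure, so arguing fibrewise via local freeness would be circular; drop that alternative or replace it by the Cartier-type argument.
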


\begin{proof} The perfectness follows from \cite{shiho:rel1} 1.16.  Since $f$ is of Cartier type, \cite{hy-ka} 2.24  provides 
the Frobenius structure.
\end{proof}

Moreover Shiho (\cite{shiho:rel1} 1.19) was also able, always in our setting \eqref{diag2}, 
to define a base change theorem (see also \cite{Ka:log} 6.10). 
In fact one can complete the diagram  \eqref{diag2} as

\begin{equation} \label{diag3}
\xymatrix{
(X_{s},M_s)  \ar[d]^{\iota} \ar[r]^{f_{s}}&  s^{\times}  \ar[d] ^{\iota} \ar[r]& \hat{s}^\times = {\cal V}^\times \ar[d]^{\hat{\iota}}\\
 (X,M) \ar[r]^f & (C, N) \ar[r] &(\mathscr C, {\mathscr N}),}
\end{equation}

\noindent 
where we indicate the morphism defined by $t \rightarrow 0$ 
by $\hat{\iota}$ and identify $\hat{s}^\times$ with $\cal V^\times$. Note that all the squares are cartesian. 
The $\iota$'s and $\hat{\iota}$ are exact closed immersions and 
$f$ and $f_s$ are proper and log-smooth. Then, following  \cite{shiho:rel1}1.19:
\begin{thr}\label{basechange1} As in  (\ref{diag3}) we  have an isomorphism
$$ 
\mathbb L\hat{\iota}^{\ast}\R f_{(X,M)/(\mathscr C, {\mathscr N}), {\rm crys} \ast}({\O}_{\mathrm{crys}, X,K})  
\iso
{\R f_{s}}_{(X_{s}, M_s) / {{\cal V}^\times}, {\rm crys} \ast } ({\O}_{\mathrm{crys}, X_s, K})
$$
in the derived category of perfect $K$-complexes.
\end{thr}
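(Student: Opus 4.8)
The plan is to reduce the statement to a base-change assertion about the crystalline complexes $C_{(X_\bullet,M_\bullet)/(\mathscr C_i,\mathscr N_i)}$ at each finite level $i$, then pass to the limit over $i$ and tensor with $K$. Fix a Hyodo--Kato embedding system $(\mathscr P_\bullet,\mathscr M_\bullet)$ as in \eqref{HKsystem}. The first step is the purely local (on the level of log-PD-envelopes) comparison: since the squares in \eqref{diag3} are cartesian, $\hat\iota$ is an exact closed immersion, and $f$, $f_s$ are proper and log-smooth, the formation of the log-PD-envelope of $i_\bullet$ commutes with the base change $(\mathscr C_i,\mathscr N_i)\rightsquigarrow(\mathcal V_i^\times)$ along $\hat\iota$; this is exactly the content invoked in \cite{Ka:log} 6.10 and \cite{shiho:rel1} 1.19. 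Concretely, the fibre embedding system $(\mathscr P_{s,\bullet},\mathscr M_{s,\bullet})$ obtained by pulling back along $\hat{s}^\times\to(\mathscr C,\mathscr N)$ serves as a Hyodo--Kato embedding system for $(X_{s,\bullet},M_{s,\bullet})$ over $\mathcal V^\times$, and the log de Rham complex of its PD-envelope is identified with $\mathbb L\hat\iota^\ast$ applied to the original crystalline complex.

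The second step is to promote this level-wise identification to an isomorphism of derived pushforwards. Apply $\mathbb R(f\theta)_\ast$ (resp. $\mathbb R(f_s\theta_s)_\ast$) and use proper base change for Zariski cohomology of coherent sheaves along the cartesian square $X_{s,\bullet}\to X_\bullet$, $\mathscr C\to\hat s$ in \eqref{diag3}: since $f\theta$ is proper, $\mathbb L\hat\iota^\ast\,\mathbb R(f\theta)_\ast(-)\cong\mathbb R(f_s\theta_s)_\ast\,\mathbb L(\text{pullback})^\ast(-)$ on the finite levels. Here the crucial input is Proposition \ref{consist}(1): the transition maps of the projective system are themselves derived base changes $\otimes^{\mathbb L}_{\O_{\mathscr C_i}}\O_{\mathscr C_{i-1}}$, so the system $\{\mathbb R(f\theta)_\ast C_{(X_\bullet,M_\bullet)/(\mathscr C_i,\mathscr N_i)}\}_i$ is a ``consistent'' system of perfect complexes in the sense of \cite{shiho:rel1}, and by Theorem \ref{relcrys1} its limit is a perfect complex of iso-coherent sheaves. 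One then invokes compatibility of $\mathbb L\hat\iota^\ast$ with $\mathbb R\varprojlim_i$ on such consistent systems of perfect complexes (this is where \cite{BO78} B.9 and the boundedness/finiteness in Proposition \ref{consist}(2) enter, guaranteeing the $\varprojlim^1$-terms vanish and everything stays perfect), and finally tensors with $K$ to land in the derived category of perfect $K$-complexes, where $\mathbb L\hat\iota^\ast$ is the honest derived pullback along $\hat s_K\hookrightarrow\mathscr C_K$.

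The main obstacle I anticipate is not the proper base change nor the limit formalism, but verifying that the \emph{derived} pullback $\mathbb L\hat\iota^\ast$ is what one wants and that no higher Tor terms spoil the identification: $\hat\iota$ is a closed immersion of a point into a curve, so $\O_{\hat s}$ has Tor-dimension one over $\O_{\mathscr C}$ and a priori $\mathbb L\hat\iota^\ast$ has a nontrivial $\mathrm{Tor}_1$. The point is that $C$ is a smooth curve, $\hat s$ is cut out by the single regular parameter $t$, and the crystalline complex is, level by level, a complex of $\O_{\mathscr P_\bullet}$-modules flat over $\O_{\mathscr C_i}$ in the relevant range (being a log de Rham complex of a log-smooth PD-envelope over $(\mathscr C_i,\mathscr N_i)$) — so the underived restriction already computes $\mathbb L\hat\iota^\ast$ and agrees with the fibre complex over $\mathcal V^\times$. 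Making this flatness/regularity argument precise at the level of the embedding system, and checking it is stable under passing from $(\mathscr C,\mathscr N)$ to $(C,N)$ and then to $s^\times=\hat s^\times$, is the technical heart; after that, the rest is bookkeeping with Proposition \ref{consist} and Theorem \ref{relcrys1}, citing \cite{shiho:rel1} 1.19 for the corresponding statement without Frobenius and at finite level.
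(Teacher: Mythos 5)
The paper offers no independent proof of this theorem: it is stated as a direct quotation of Shiho's base change theorem (\cite{shiho:rel1} 1.19, see also \cite{Ka:log} 6.10), applied in the situation of diagram \eqref{diag3} where $\hat{\iota}$ is an exact closed immersion and $f$, $f_s$ are proper and log-smooth. Your outline --- finite-level base change of the crystalline complexes via the embedding system, the consistency of the projective system (Proposition \ref{consist}) together with \cite{BO78} B.9 to pass to $\mathbb R\varprojlim$, and finally tensoring with $K$ --- is essentially a reconstruction of that cited argument, so it is correct and matches the paper's (purely citational) approach.
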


 \medskip
 $$ \ast  \ast \ast$$

We will need not only log-crystalline cohomology, 
but also  the entire apparatus developed by Shiho in his work: 
namely two other relative cohomology theories: the log-convergent and the 
log-analytic cohomologies.
In our setting, Shiho \cite{shiho:rel1}, \cite{shiho:rel2} was able to introduce 
the {\it relative m-th  log-convergent cohomology of} 
$(X,M)/(C, N)$ with respect to $(\mathscr C, {\mathscr N})$ which are indicated by 
\begin{equation*} 
	\R^{m}f_{(X,M)/(\mathscr C, {\mathscr N}), {\rm conv} \ast }({\O}_{\mathrm{conv}, X,K}).
\end{equation*}
Here ${\O}_{{\rm conv},X,K}$ is the trivial convergent isocrystal. Let us denote the log-tube of the closed immersion 
$i_\bullet : (X_\bullet, M_\bullet) \rightarrow (\mathscr P_\bullet, \mathscr M_\bullet)$, {\it i.e.}, 
the usual tube $]X_\bullet[_{{\mathscr P}_\bullet^{\mathrm{ex}}}$ of the exactification ${\mathscr P}_\bullet^{\mathrm{ex}}$ 
of $i_\bullet$ Zariski locally on ${\mathscr P}_\bullet$, 
by $]X_\bullet[_{\mathscr P_\bullet}^{\mathrm{log}}$, 
and let $\mathrm{sp} : ]X_\bullet[_{\mathscr P_\bullet}^{\mathrm{log}} \rightarrow \widehat{\mathscr P}_\bullet$ 
be the specialization map where ${\widehat{\mathscr P}}_\bullet$ is 
the completion of ${\mathscr P}_\bullet$ along $X_\bullet$ \cite{shiho:fund2} 2.2.4, \cite{shiho:rel1} 2.19. 
Then the log-convergent cohomology is calculated by the logarithmic de Rham complex 
$$
\Omega^{\bullet}_{{] {{ X}_{\bullet}}[^{\rm log}_{{{\mathscr P}}_{\bullet}}}/{{\mathscr C}_{K}}} \! \!  \! 
< {{\mathscr M}_\bullet}/ {\mathscr N}>
$$

\noindent
of the simplicial rigid analytic space $]X_\bullet[_{\mathscr P_\bullet}^{\mathrm{log}}$ over 
$\mathscr C_K = ]C[_{\mathscr C} = ]C[_{\mathscr C}^{\mathrm{log}}$ 
(because the closed immersion $(C, N) \rightarrow (\mathscr C, \mathscr N)$ is exact) 
\cite{shiho:rel1} 2.34: 
$$
     \R f_{(X,M)/(\mathscr C, {\mathscr N}), {\rm conv} \ast }({\O}_{\mathrm{conv}, X,K}) \iso 
     \R (f\theta)_\ast \mathrm{sp}_\ast \Omega^{\bullet}_{{] {{ X}_{\bullet}}[^{\rm log}_{{{\mathscr P}}_{\bullet}}}/]{{\mathscr C}_{K}}} \! \!  \! 
< {{\mathscr M}_\bullet}/ {\mathscr N}>.
$$
\noindent 
Then there is a canonical comparison morphism 
\begin{equation}
\label{crysconv}
      \mathrm{sp}_\ast \Omega^{\bullet}_{{] {{ X}_{\bullet}}[^{\rm log}_{{{\mathscr P}}_{\bullet}}}/{{\mathscr C}_{K}}} \! \!  \! 
< {{\mathscr M}_\bullet}/ {\mathscr N}>  \, \,
      \longrightarrow 
     K \otimes \varprojlim_i\, 
     C_{(X_\bullet, M_\bullet)/(\mathscr C_i,\mathscr N_i)}
\end{equation}
and it induces the comparison theorem in \cite{shiho:rel1} 2.36:

\begin{thr}\label{basechange3} The canonical morphism \eqref{crysconv} induces an isomorphism 
$$
\R^{m} f_{(X,M) / (\mathscr C, {\mathscr N}), {\rm conv} \ast } ({\O}_{\mathrm{conv}, X,K}) \iso
\R^{m} f_{(X,M) / (\mathscr C, {\mathscr N}), {\rm crys} \ast } ({\O}_{\mathrm{crys}, X,K})
$$
of  iso-coherent sheaves  on $\mathscr C$ such that the Frobenius structures on both sides commute. 
\end{thr}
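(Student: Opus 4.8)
The plan is to compare the two cohomology theories term by term through an explicit resolution of the crystalline complex by the log-de Rham complex of a log-PD-envelope, and to show that the comparison map $\eqref{crysconv}$ becomes an isomorphism after passing to cohomology sheaves. First I would work Zariski-locally on each simplicial level $\mathscr P_\bullet$: replace $i_\bullet$ by its exactification $\mathscr P_\bullet^{\mathrm{ex}}$, so that on the one hand the tube $]X_\bullet[^{\mathrm{log}}_{\mathscr P_\bullet}$ is the ordinary tube $]X_\bullet[_{\mathscr P_\bullet^{\mathrm{ex}}}$ and the log-de Rham complex $\Omega^\bullet_{]X_\bullet[^{\mathrm{log}}_{\mathscr P_\bullet}/\mathscr C_K}\!<\!\mathscr M_\bullet/\mathscr N\!>$ is the de Rham complex of the exactified tube, and on the other hand the crystalline complex $C_{(X_\bullet,M_\bullet)/(\mathscr C_i,\mathscr N_i)}$ is computed by the log-de Rham complex of the log-PD-envelope of $i_\bullet^{\mathrm{ex}}$ over $(\mathscr C_i,\mathscr N_i)$. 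Thus on both sides we are comparing, level by level, an "analytic" (tubular) de Rham complex with the inverse limit of "formal" (PD) de Rham complexes over the same log-smooth base $(\mathscr C,\mathscr N)$ — the relative version of the classical crystalline–convergent comparison.

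Next I would establish that $\eqref{crysconv}$ is a quasi-isomorphism on stalks. Because $(\mathscr P_\bullet^{\mathrm{ex}},\mathscr M_\bullet)$ is log-smooth over $(\mathscr C,\mathscr N)$ and the immersion is now exact, étale-locally there are log-coordinates, and one reduces — exactly as in Shiho's absolute case and in Berthelot's foundational comparison — to a Poincaré-lemma computation: the de Rham complex of the tube of an exact log-smooth formal scheme relative to $(\mathscr C,\mathscr N)$ resolves the structure sheaf of the analytic space in the same way the inverse limit of PD-de Rham complexes does. The key input here is Shiho's comparison $\eqref{crysconv}$ as constructed in \cite{shiho:rel1} 2.36, together with the base formal scheme being fixed and log-smooth over $\mathcal V$; the relative Poincaré lemma is local on $\mathscr C_K$ and follows from the absolute one applied fiberwise. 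I would then apply $\R(f\theta)_\ast$ to both complexes; since $f$ is proper and log-smooth, Theorem \ref{relcrys1} (via \cite{shiho:rel1} 1.16) guarantees that the crystalline side produces a perfect complex of iso-coherent sheaves, and Proposition \ref{consist} gives the Mittag-Leffler condition needed to interchange $\R(f\theta)_\ast$ with $\varprojlim_i$ and to identify $\R\varprojlim$ with $\varprojlim$ on cohomology. Hence the induced map on the $m$-th iso-coherent cohomology sheaves is an isomorphism.

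For the Frobenius compatibility, I would note that both $\eqref{crysconv}$ and the specialization/exactification constructions are functorial for the Frobenius lift $\sigma_{\mathscr C}$ on $\mathscr C$ (extended to $\mathscr P_\bullet$ by hypothesis (ii) in $\eqref{HKsystem}$): the crystalline Frobenius is the one of Theorem \ref{relcrys1}, coming from \cite{hy-ka} 2.24 as $f$ is of Cartier type, and the convergent Frobenius is the pullback along the lifted Frobenius on the tubes. Chasing through the construction of $\eqref{crysconv}$ — which is built from the natural map from analytic forms on the tube to the PD-de Rham complex — shows the square of Frobenius actions commutes before taking $\R(f\theta)_\ast$, hence after. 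The main obstacle I anticipate is the relative Poincaré lemma underlying the quasi-isomorphism of $\eqref{crysconv}$: one must handle the non-exactness of the original immersion $i_\bullet$ and the fact that the comparison is only defined after exactification Zariski-locally, so checking that the local quasi-isomorphisms glue and descend to the simplicial scheme $X_\bullet$ (using that the log-structures come from the Zariski topology, per (ii)) is the delicate bookkeeping step; once this is granted, the rest is formal from \ref{consist}, \ref{relcrys1} and the cited results of Shiho.
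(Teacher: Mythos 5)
The paper itself does not prove this statement: both the morphism \eqref{crysconv} and the fact that it induces the asserted isomorphism, Frobenius compatibility included, are imported wholesale from Shiho (\cite{shiho:rel1} 2.34--2.36), so the paper's ``proof'' is a citation. Your proposal instead sketches an independent proof, and its outer, formal layer matches what such a proof would need: pass to an exactification so that the log-tube is an ordinary tube and the crystalline complex is the log-de Rham complex of the log-PD-envelope; use properness through Proposition \ref{consist} (Mittag-Leffler, so that $\varprojlim_i$, $\otimes K$ and $\R^m(f\theta)_\ast$ interact correctly) and Theorem \ref{relcrys1} (perfectness and iso-coherence); get Frobenius compatibility from functoriality under the chosen lifts $\sigma_\bullet$ and $\sigma_{\mathscr C}$.

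The gap is at the central step. You assert that \eqref{crysconv} is a quasi-isomorphism on stalks ``by a Poincar\'e-lemma computation'' and that ``the relative Poincar\'e lemma is local on $\mathscr C_K$ and follows from the absolute one applied fiberwise'', while simultaneously naming \cite{shiho:rel1} 2.36 as ``the key input'' --- but 2.36 \emph{is} the statement being proved, so as written the argument is circular, and without that citation the crucial point is simply not established. Note that the two sides are not de Rham complexes of the same object: the convergent side lives on the open tube of radius $1^-$, whereas $K \otimes \varprojlim_i C_{(X_\bullet, M_\bullet)/(\mathscr C_i, \mathscr N_i)}$ is the de Rham complex of the $p$-adically completed log-PD-envelope, which, roughly speaking, sees only the smaller closed tube of radius $p^{-1/(p-1)}$; the map \eqref{crysconv} is a restriction between these two regimes, and showing that it becomes an isomorphism on the relative cohomology sheaves (with the correct commutation of $\varprojlim_i$, $\otimes K$ and pushforward) is precisely the content of Shiho's theorem, not a routine fibrewise application of an absolute Poincar\'e lemma over $\mathscr C_K$ --- the comparison takes place over the formal base $\mathscr C$, not fibre by fibre. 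Likewise, the commutation of the Hyodo--Kato Frobenius structure of Theorem \ref{relcrys1} with the Frobenius induced by $\sigma_\bullet$ on the convergent side is asserted by ``chasing through the construction'' rather than checked. So either carry out the crystalline/convergent local comparison in the relative log setting in detail, or do what the paper does and simply invoke \cite{shiho:rel1} 2.36.
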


\vspace*{3mm}

\begin{rmk}
To any locally free log-convergent isocrystal $\cal E$, 
it is possible to associate a log-crystalline isocrystal $\Phi(\cal E)$ \cite{shiho:rel1} 2.35. 
Here we should have written $\Phi ({\cal O}_{\mathrm{conv}, X,K})$
for the structural log-convergent isocrystal ${\cal O}_{\mathrm{crys}, X,K}$.
But $\Phi ({\cal O}_{\mathrm{conv}, X,K})$ is the structural log-crystalline isocrystal. 
Hence we prefer to omit $\Phi$.
\end{rmk}

As a corollary, if we specialize the above theorem to the case presented in (8), we have ([SH08] 2.38):

\begin{thr}\label{basechangeconv1} With the notation above, there is a natural isomorphism
$$ 
\mathbb L\hat{\iota}^{\ast}\R f_{(X,M)/(\mathscr C, {\mathscr N}), {\rm conv} \ast}({\O}_{\mathrm{conv}, X,K})  
\iso
{\R f_{s}}_{(X_{s}, M_s) / {{\cal V}^\times}, {\rm conv} \ast } ({\O}_{\mathrm{conv}, X_s, K})
$$
in the derived category of perfect $K$-complexes.
\end{thr}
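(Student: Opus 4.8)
The plan is to deduce this from the two base-change-type results already available --- the comparison between relative log-convergent and relative log-crystalline cohomology over $\mathscr C$ (Theorem \ref{basechange3}) and the log-crystalline base-change isomorphism along $\hat\iota$ (Theorem \ref{basechange1}) --- together with the comparison between log-convergent and log-crystalline cohomology for $f_s$ over ${\cal V}^\times$ in the absolute (point-base) situation. Thus the statement is essentially Shiho's \cite{shiho:rel1} 2.38; what needs to be said is how these pieces fit together and which compatibility has to be checked.

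Concretely I would argue as follows. By Theorem \ref{basechange3}, the canonical comparison morphism \eqref{crysconv}, pushed forward by $\R(f\theta)_\ast$, is a quasi-isomorphism, hence induces an isomorphism
$$
\R f_{(X,M)/(\mathscr C,{\mathscr N}),{\rm conv}\ast}(\O_{\mathrm{conv},X,K})\ \iso\ \R f_{(X,M)/(\mathscr C,{\mathscr N}),{\rm crys}\ast}(\O_{\mathrm{crys},X,K})
$$
in the derived category of perfect complexes of iso-coherent sheaves on $\mathscr C$ (both sides are perfect by Theorem \ref{relcrys1}). Applying the derived pullback $\mathbb{L}\hat\iota^\ast$ along the exact closed immersion $\hat s\hookrightarrow\mathscr C$ of \eqref{diag3}, which preserves perfectness, and then Theorem \ref{basechange1} on the crystalline side, one gets
$$
\mathbb{L}\hat\iota^\ast\,\R f_{(X,M)/(\mathscr C,{\mathscr N}),{\rm conv}\ast}(\O_{\mathrm{conv},X,K})\ \iso\ \R{f_s}_{(X_s,M_s)/{\cal V}^\times,{\rm crys}\ast}(\O_{\mathrm{crys},X_s,K})
$$
in the derived category of perfect $K$-complexes. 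Since $f_s:(X_s,M_s)\to{\cal V}^\times$ is proper and log-smooth, the comparison theorem between its log-convergent and its log-crystalline cohomology (the absolute, point-base counterpart of Theorem \ref{basechange3}; \cite{shiho:rel1}, \cite{shiho:rel2}) identifies the right-hand side with $\R{f_s}_{(X_s,M_s)/{\cal V}^\times,{\rm conv}\ast}(\O_{\mathrm{conv},X_s,K})$. Composing the three isomorphisms yields the assertion.

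The step I expect to be the main obstacle is checking that the composite so obtained is the \emph{canonical} base-change morphism on the log-convergent side, equivalently that $\mathbb{L}\hat\iota^\ast$ transports the comparison morphism \eqref{crysconv} for $(X,M)/(\mathscr C,{\mathscr N})$ to its counterpart for $(X_s,M_s)/{\cal V}^\times$. For this one base-changes the Hyodo--Kato embedding system $({\mathscr P}_\bullet,{\mathscr M}_\bullet)$ of \eqref{HKsystem} along $s^\times\to(C,N)$ and ${\cal V}^\times\to(\mathscr C,{\mathscr N})$ to an embedding system for $(X_{s,\bullet},M_{s,\bullet})$ over ${\cal V}^\times$, and one uses that exactification commutes with base change along the exact closed immersion $\hat\iota$, so that the log-tube of the base-changed system is the fibre over $\hat s_K$ of $]X_\bullet[_{{\mathscr P}_\bullet}^{\mathrm{log}}$, compatibly with the specialization maps and with the logarithmic de Rham complexes. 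Together with Proposition \ref{consist} --- which supplies the Mittag-Leffler condition, so that $\mathbb{L}\hat\iota^\ast$ may be interchanged, in the derived sense, with $K\otimes\mathbb{R}\varprojlim_i$ on the crystalline complexes --- this produces the commutative square relating the two comparison morphisms and shows the three isomorphisms above are compatible. The genuinely delicate points are precisely this interchange of the (in general non-exact) tube/exactification construction with the base change along $\hat\iota$ and the bookkeeping of the $\mathbb{R}\varprojlim$'s; both are carried out by Shiho, so in the paper this step reduces to the citation of \cite{shiho:rel1} 2.38.
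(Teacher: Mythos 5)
Your proposal is correct and follows essentially the paper's own route: the paper presents this statement as an immediate corollary of the comparison Theorem~\ref{basechange3} specialized to the base-change situation of \eqref{diag3}, with the substance (including the compatibility of the comparison morphisms with $\mathbb{L}\hat{\iota}^{\ast}$) delegated to the citation of Shiho \cite{shiho:rel1} 2.38, exactly as you conclude. Your explicit chain through Theorems~\ref{basechange3} and~\ref{basechange1} and the absolute comparison over ${\cal V}^\times$ is just an unpacking of that citation.
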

 
Let us introduce, after \cite{shiho:rel1}, {the {\it log-analytic cohomology sheaves} 
 {\it  of}  $(X,M) /(C, N)$ with respect to $(\mathscr C, {\mathscr N})$ which is indicated and defined by 
\begin{equation*} 
	\R^{m}  f_{(X,M) / (\mathscr C, {\mathscr N}), {\rm an} \ast} (\O_{\mathrm{an}, X, K}) 
	= \R^m g_{K \ast}^{\mathrm{ex}} 
	\Omega^{\bullet}_{{] {{ X}_{\bullet}}[^{\rm log}_{{{\mathscr P}}_{\bullet}}}/{{\mathscr C}_{K}}} \! \!  \! 
< {{\mathscr M}_\bullet}/ {\mathscr N}>. 
\end{equation*}
in \cite{shiho:rel1} 4.1, 
where $g_K^{\mathrm{ex}} : ]X_\bullet[_{{\mathscr P}_\bullet}^{\mathrm{log}} \rightarrow \mathscr C_K$ 
is the induced morphism from $g : \mathscr P_\bullet \rightarrow \mathscr C$. 
Then they are $\O_{\mathscr C_K}$-sheaves.  
By \cite{shiho:rel1} 4.6 we have

\begin{thr} \label{logan} In the previous notation, there is an isomorphism 
$$
\mathrm{sp}_{\ast}\R^m  f_{(X,M) / (\mathscr C, {\mathscr N}), {\rm an} \ast } ({\O}_{{\rm an},X,K}) \iso 
\R^m f_{(X,M) / (\mathscr C, {\mathscr N}), {\rm conv} \ast } ({\O}_{{\rm conv},X,K}), 
$$
for any $m$ which is compatible with Frobenius maps, 
where $\mathrm{sp} : \mathscr C_K \rightarrow \mathscr C$ 
is the specialization morphism. Moreover, 
the log-analytic cohomology 
$\R^{m} f_{(X,M) / (\mathscr C, {\mathscr N}), {\rm an} \ast }({\O}_{\mathrm{an}, X,K})$ 
is a coherent $\O_{{\mathscr C_{K}}}$-sheaf such that the Frobenius map 
is an isomorphism. 
\end{thr}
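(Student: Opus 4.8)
The plan is to reduce everything to Shiho's comparison results between log-analytic and log-convergent cohomology at the level of embedding systems, and then to propagate coherence and Frobenius-compatibility through the spectral sequences computing the hypercohomology. First I would observe that both sides are computed from the same logarithmic de Rham complex $\Omega^{\bullet}_{]X_\bullet[^{\rm log}_{\mathscr P_\bullet}/\mathscr C_K}\!<\!\mathscr M_\bullet/\mathscr N\!>$ on the simplicial rigid analytic space $]X_\bullet[^{\rm log}_{\mathscr P_\bullet}$: the log-convergent cohomology is $\R(f\theta)_\ast \mathrm{sp}_\ast$ of this complex while the log-analytic cohomology is $\R g^{\mathrm{ex}}_{K\ast}$ of it. Since $g^{\mathrm{ex}}_K = \mathrm{sp}\circ(f\theta)$ factors through the specialization map, applying $\mathrm{sp}_\ast$ to the log-analytic side and comparing gives exactly the claim — provided one knows that $\mathrm{sp}_\ast$ commutes with the relevant $\R$'s up to the stated isomorphism. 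This is precisely the content of \cite{shiho:rel1} 4.6, which I would invoke after checking that our Hyodo–Kato embedding system $(\mathscr P_\bullet,\mathscr M_\bullet)$ satisfies its hypotheses (separated, finite type, log-smooth over $(\mathscr C,\mathscr N)$, with the log-structure Zariski-local — all guaranteed by conditions (i)–(iii) in \eqref{HKsystem}).

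Next I would address the Frobenius compatibility. The Frobenius endomorphism $\sigma_{\mathscr C}$ on $\mathscr C$ extends to the embedding system by construction (condition (ii)), hence induces a semilinear endomorphism of the de Rham complex and therefore of both cohomology theories; the comparison morphism of \cite{shiho:rel1} 4.6 is built functorially from the specialization map, so it intertwines these two Frobenius actions. I would spell this out by noting that $\mathrm{sp}$ is compatible with the Frobenius lifts on $]X_\bullet[^{\rm log}_{\mathscr P_\bullet}$ and on $\widehat{\mathscr P}_\bullet$, so the induced map on $\R g^{\mathrm{ex}}_{K\ast}$ commutes with Frobenius, and then $\mathrm{sp}_\ast$ of it commutes with Frobenius on the convergent side.

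For the coherence statement I would argue as follows. By Theorem \ref{basechange3} and Theorem \ref{relcrys1}, the log-convergent cohomology sheaf $\R^m f_{(X,M)/(\mathscr C,\mathscr N),{\rm conv}\ast}(\O_{\mathrm{conv},X,K})$ is an iso-coherent sheaf on $\mathscr C$, i.e.\ of the form $\mathcal F\otimes\mathbb Q$ for a coherent $\O_{\mathscr C}$-module $\mathcal F$. Its pullback under $\mathrm{sp}^\ast$ to $\mathscr C_K$ is then a coherent $\O_{\mathscr C_K}$-module, and the isomorphism above identifies $\mathrm{sp}_\ast \R^m f_{{\rm an}\ast}(\O_{\mathrm{an},X,K})$ with this iso-coherent sheaf. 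One checks, using that $\mathrm{sp}_\ast$ is fully faithful on coherent sheaves on the quasi-compact rigid space $\mathscr C_K$ (the adjunction $\mathrm{sp}^\ast\mathrm{sp}_\ast\mathcal G\iso\mathcal G$ for coherent $\mathcal G$), that $\R^m f_{{\rm an}\ast}(\O_{\mathrm{an},X,K})$ itself must be coherent — it is $\mathrm{sp}^\ast$ of the iso-coherent sheaf in question, up to the canonical identification. Finally, the Frobenius map on the log-analytic side is an isomorphism because the corresponding Frobenius on the log-crystalline cohomology is an isomorphism by Theorem \ref{relcrys1} (via \cite{hy-ka} 2.24, using that $f$ is of Cartier type), and the comparison isomorphisms of Theorems \ref{basechange3} and \ref{logan} transport this property.

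The main obstacle I anticipate is the careful verification that the de Rham complex used here is genuinely the same object as the one Shiho uses in \cite{shiho:rel1} 4.1 and 4.6 — that is, matching the exactification construction $]X_\bullet[^{\rm log}_{\mathscr P_\bullet} = \,]X_\bullet[_{\mathscr P_\bullet^{\mathrm{ex}}}$ performed Zariski-locally, and confirming that our not-necessarily-exact closed immersion $i_\bullet$ still falls within the scope of his hypotheses after exactification. Once this bookkeeping is in place, the statement is essentially a repackaging of \cite{shiho:rel1} 4.6 together with Theorems \ref{relcrys1} and \ref{basechange3}; there is no genuinely new input, only the need to state it in the exact form required for the constructions of section 4.
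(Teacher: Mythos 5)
Your skeleton for the first part --- reduce the comparison isomorphism and its Frobenius compatibility to \cite{shiho:rel1} 4.6 applied to the Hyodo--Kato embedding system --- is the same as the paper's, and that part is fine (the paper cites 4.6 for everything in the statement, \emph{including} the coherence of $\R^m f_{(X,M)/(\mathscr C, \mathscr N), \mathrm{an}\ast}(\O_{\mathrm{an},X,K})$, and only has to supply an argument for the final claim that the Frobenius map on the analytic side is an isomorphism). Two of your remaining steps, however, do not go through as written.

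First, your coherence argument is circular. To identify $\R^m f_{\mathrm{an}\ast}(\O_{\mathrm{an},X,K})$ with $\mathrm{sp}^\ast$ of the iso-coherent convergent sheaf you invoke the counit $\mathrm{sp}^\ast\mathrm{sp}_\ast\mathcal G \iso \mathcal G$, but this counit is an isomorphism only once one already knows that $\mathcal G$ is coherent (for a general $\O_{\mathscr C_K}$-module it fails); knowing only that $\mathrm{sp}_\ast\mathcal G$ is iso-coherent gives you nothing about $\mathcal G$ itself. Coherence of the relative log-analytic cohomology is a genuine theorem and must be taken from \cite{shiho:rel1} 4.6 (where it is part of the statement), not deduced by this adjunction. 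Second --- and this is precisely the one point the paper actually argues --- your justification that the Frobenius map on the analytic side is an isomorphism, namely that the comparison isomorphisms ``transport this property,'' begs the question: the comparison controls only $\mathrm{sp}_\ast$ of the (linearized) Frobenius map, and to descend ``isomorphism'' from $\mathrm{sp}_\ast\varphi$ to $\varphi$ you need exactly that $\mathrm{sp}_\ast$ is conservative on coherent sheaves. The paper's proof is: if $\mathrm{sp}_\ast\mathcal F = 0$ for a coherent sheaf $\mathcal F$ on $\mathscr C_K$ then $\mathcal F = 0$, applied to the kernel and cokernel of the Frobenius map, whose $\mathrm{sp}_\ast$-images vanish because the convergent Frobenius structure is an isomorphism. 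You do assert full faithfulness of $\mathrm{sp}_\ast$ on coherent sheaves, and invoked at this point (with coherence already secured from 4.6) it would close the gap; but as organized, that property is used where it cannot be (to prove coherence) and omitted where it is needed (to prove the Frobenius map is an isomorphism), and it is in any case asserted without proof.
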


\begin{proof} The assertion follows from \cite{shiho:rel1} 4.6 except that on the Frobenius structures on the 
log-analytic cohomology. 
By definition $\R^m f_{(X,M) / (\mathscr C, {\mathscr N}), {\rm an} \ast } ({\O}_{{\rm an}, X,K})$ 
is endowed with a Frobenius map which becomes the Frobenius structure 
on the log-convergent cohomology sheaf 
$\R^m f_{(X,M) / (\mathscr C, {\mathscr N}), {\rm conv} \ast } ({\O}_{{\rm conv},X,K})$ 
after taking the direct image by the specialization morphism $\mathrm{sp}$. 
If $\mathrm{sp}_{\ast} {\cal F}=0$ 
for a coherent sheaf $\cal F$ on ${\mathscr C}_K$, then one can argue that ${\cal F}=0$,  and so we may 
conclude that the Frobenius map is an isomorphism on 
$\R^m f_{(X,M)/(\mathscr C, {\mathscr N}), {\rm an} \ast}({\O}_{{\rm an}, X,K})$.  
\end{proof} 

\medskip

 $$ \ast  \ast \ast$$
 
So far we have not discussed the differential structure on the cohomology sheaves. 
We are going to do that in this subsection and we will refer to it as "{\it the Gauss-Manin connection}" with respect 
the composite of log-smooth morphisms 
$$
(X, M) \rightarrow (C, N) \rightarrow \mathrm{Spec}\, k. 
$$

As is the same with the case of usual crystalline cohomology \cite{Ber:Cri74} V, 3.6, 
the log-crystalline cohomology 
$\R(f\theta)_\ast C_{(X_\bullet, M_\bullet)/(\mathscr C_i, \mathscr N_i)}$ 
is a log-crystal in the derived category of $\mathcal O_{\mathrm{crys}, (C, N)/\mathcal V_i}$-modules 
by the base change theorem of log-crystalline 
cohomology for quasi-coherent and flat log-crystals \cite{Ka:log} 6.10. 
Let us put $\mathscr C(1) = (\mathscr C, \mathscr N) \times_{\mathrm{Spec}\, \mathcal V} (\mathscr C, \mathscr N)$  
(resp. $\mathscr P_\bullet(1) = 
(\mathscr P_\bullet, \mathcal M_\bullet) \times_{\mathrm{Spec}\, \mathcal V} (\mathscr P_\bullet, \mathcal M_\bullet)$), 
$\mathscr D(1)$ the $p$-adically complete log-PD-envelope of the diagonal embedding 
$(C, N) \rightarrow \mathscr C(1)$ ($\mathscr D_i(1)$ its reduction modulo $p^i$) \cite{Ka:log} 5.3, 
and $\mathrm{pr}_\alpha : \mathscr D(1) \rightarrow (\mathscr C, \mathscr N)$ the natural $\alpha$-th projection for $\alpha=1, 2$. 
If $C_{(X_\bullet, M_\bullet)/\mathscr D_i(1)}$ is the crystalline complex associated to the log-crystal 
$\mathcal O_{\mathrm{crys} X/\mathcal V}$ modulo $p^i$ on Zariski site of $X_\bullet$ which is defined by 
the logarithmic de Rham complex of the log-PD-envelope of the diagonal closed immersion 
$(X_\bullet, M_\bullet) \rightarrow \mathscr P_\bullet(1)$, 
then the crystalline cohomology of $(X, M)$ with respect to $\mathscr D_i(1)$ is given by 
$$
    \R f_{(X, M)/\mathscr D_i(1), {\rm crys} \ast} ({\O}_{{\rm crys} X/\mathcal V_i}) 
 = \R(f\theta)_\ast C_{(X_\bullet, M_\bullet)/\mathscr D_i(1)}
$$
\cite{Ka:log} 6.4. Since $\mathrm{pr}_\alpha$ is flat,  the base change theorem induces canonical isomorphisms 
$$
     \mathrm{pr}_2^\ast\R^m f_{(X, M)/(\mathscr C_i, {\mathscr N}_i), {\rm crys} \ast}({\O}_{{\rm crys}, X/\mathcal V_i})\, 
     \displaystyle{\mathop{\rightarrow}^{\cong}}\, 
     \R^mf_{(X, M)/\mathscr D_i(1), {\rm crys} \ast}({\O}_{{\rm crys} X/\mathcal V_i})
     \displaystyle{\mathop{\leftarrow}^{\cong}}\, 
     \mathrm{pr}_1^\ast\R^m f_{(X, M)/(\mathscr C_i, {\mathscr N}_i), {\rm crys} \ast } ({\O}_{{\rm crys} X/\mathcal V_i}). 
$$
The collection of these isomorphisms for all $i$ forms a HPD-stratification 
(see \cite{shiho:fund1} 4.3.1 and \cite{shiho:rel1} 1.21) and it induces the Gauss-Manin connection 
$$
   \nabla^{\mathrm{GM}}_{\mathrm{crys}} : \R^m f_{(X,M) / (\mathscr C, {\mathscr N}), {\rm crys} \ast } ({\O}_{{\rm crys},X,K})
   \rightarrow  \R^m f_{(X,M) / (\mathscr C, {\mathscr N}), {\rm crys} \ast } ({\O}_{{\rm crys}, X,K}) 
   \otimes_{\mathcal O_{\mathscr C}} \Omega_{(\mathscr C, \mathscr N)/\mathcal V}^1. 
$$
for the log-crystalline cohomology by Proposition \ref{consist} and \cite{Ka:log} 6.2. 
Moreover, the connection $\nabla^{\mathrm{GM}}_{\mathrm{crys}}$ is horizontal with respect to the Frobenius structure 
induced by our chosen Frobenius endomorphism on $(\mathscr P_\bullet, \mathcal M_\bullet)$.

In the case of log-convergent (resp. log-analytic) cohomology we have a connection 
as a following sense according to \cite{shiho:rel1} 4.10.

\begin{prp} \label{convcon} There is a unique isocrystal $\mathcal F$ 
on the log-convergent site $((C, N)/\mathcal V)_{\mathrm{conv}}^{\mathrm{log}}$ such that, if 
$\mathcal F^{\mathrm{an}} = \mathrm{sp}^\ast\mathcal F$ is the associated logarithmic connection on 
$(\mathscr C_K, \mathscr N_K)$ for the specialization morphism $\mathrm{sp} : \mathscr C_K \rightarrow \mathscr C$, 
then the stratification 
$$
       \mathrm{pr}_2^\ast \mathcal F^{\mathrm{an}}\, \displaystyle{\mathop{\longrightarrow}^{\cong}}\, 
       \mathrm{pr}_1^\ast \mathcal F^{\mathrm{an}}
$$
induced from the connection of $\mathcal F^{\mathrm{an}}$ is given by the canonical composite of base change isomorphisms 
$$
     \mathrm{pr}_2^\ast \R f_{(X, M)/(\mathscr C, \mathscr N), \mathrm{an} \ast}(\O_{\mathrm{an},X, K})\, 
     \displaystyle{\mathop{\rightarrow}^{\cong}}\, 
     \R f_{(X, M)/\mathscr C(1), \mathrm{an} \ast}(\O_{\mathrm{an},X, K})
     \displaystyle{\mathop{\leftarrow}^{\cong}}\, 
     \mathrm{pr}_1^\ast \R f_{(X, M)/(\mathscr C, \mathscr N), \mathrm{an} \ast}(\O_{\mathrm{an},X, K}). 
$$
Here $\mathrm{pr}_\alpha : ]C[_{\mathscr C(1)}^{\mathrm{log}} \rightarrow ]C[_{\mathscr C}$ 
is the $\alpha$-th projection for $\alpha=1, 2$. 
Moreover, the Frobenius structures on cohomologies above commutes with the isomorphisms. 
\end{prp}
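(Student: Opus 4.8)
The plan is to obtain $\mathcal F$ directly from the relative log-analytic cohomology of $(X,M)$ over the self-product $\mathscr C(1)$, in the spirit of Shiho's relative theory (\cite{shiho:rel1} \S4), and then to adjoin the Frobenius-equivariance by exactly the argument already used for Theorem \ref{logan}. Throughout, recall that $(C,N)\hookrightarrow(\mathscr C,\mathscr N)$ is exact, so that $]C[_{\mathscr C}=]C[^{\mathrm{log}}_{\mathscr C}=\mathscr C_K$, and that by Theorem \ref{logan} the sheaves $\R^m f_{(X,M)/(\mathscr C,\mathscr N),\mathrm{an}\ast}(\O_{\mathrm{an},X,K})$ are coherent $\O_{\mathscr C_K}$-modules. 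Under this exactness and log-smoothness hypothesis, an isocrystal on $((C,N)/\mathcal V)^{\mathrm{log}}_{\mathrm{conv}}$ is the same datum as a coherent $\O_{\mathscr C_K}$-module equipped with a \emph{convergent} stratification along the system of tubular neighbourhoods $]C[^{\mathrm{log}}_{\mathscr C(\bullet)}$ (\cite{shiho:fund1}), and $\mathcal G\mapsto(\mathrm{sp}^\ast\mathcal G,\text{induced stratification})$ is an equivalence onto this category; so uniqueness of $\mathcal F$ will be automatic once we produce the stratification, and the content is entirely in its construction.

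First I would produce the stratification. For $\alpha=1,2$ the projections $\mathrm{pr}_\alpha:\mathscr C(1)\to(\mathscr C,\mathscr N)$ are morphisms of formal log-$\mathcal V$-schemes compatible with the diagonal $(C,N)\hookrightarrow\mathscr C(1)$, and $(X,M)\to(C,N)$ is proper and log-smooth over the diagonally embedded base. Choosing a compatible embedding system over $\mathscr C(1)$ refining $(\mathscr P_\bullet,\mathscr M_\bullet)$ and applying the log-analytic base change theorem of \cite{shiho:rel1} underlying 4.10 — which may also be deduced from the proper base change for log-crystalline cohomology \cite{Ka:log} 6.10 together with the comparison Theorems \ref{basechange3} and \ref{logan} — yields, for each $\alpha$, a canonical isomorphism of coherent $\O_{]C[^{\mathrm{log}}_{\mathscr C(1)}}$-modules
$$
\mathrm{pr}_\alpha^\ast\,\R^m f_{(X,M)/(\mathscr C,\mathscr N),\mathrm{an}\ast}(\O_{\mathrm{an},X,K})\ \xrightarrow{\ \cong\ }\ \R^m f_{(X,M)/\mathscr C(1),\mathrm{an}\ast}(\O_{\mathrm{an},X,K}).
$$
Writing $\mathcal F^{\mathrm{an}}:=\R^m f_{(X,M)/(\mathscr C,\mathscr N),\mathrm{an}\ast}(\O_{\mathrm{an},X,K})$, the composite of the $\alpha=2$ isomorphism with the inverse of the $\alpha=1$ isomorphism is the required stratification $\varepsilon:\mathrm{pr}_2^\ast\mathcal F^{\mathrm{an}}\xrightarrow{\ \cong\ }\mathrm{pr}_1^\ast\mathcal F^{\mathrm{an}}$.

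Next I would check that $\varepsilon$ is a genuine, convergent stratification. Carrying out the same construction over the triple product $\mathscr C(2)$ and invoking the functoriality of the base change isomorphisms with respect to the three projections $\mathscr C(2)\to\mathscr C(1)$ gives the cocycle identity on $]C[^{\mathrm{log}}_{\mathscr C(2)}$; pulling back along the diagonal $\Delta:\mathscr C_K\to]C[^{\mathrm{log}}_{\mathscr C(1)}$ gives the normalisation $\Delta^\ast\varepsilon=\mathrm{id}$. Since these isomorphisms live, by construction, on the full rigid-analytic log-tubes $]C[^{\mathrm{log}}_{\mathscr C(\bullet)}$ — not merely on the log-PD-envelopes $\mathscr D(1)$ of the diagonal — the stratification automatically satisfies the convergence condition, hence descends to an isocrystal $\mathcal F$ on $((C,N)/\mathcal V)^{\mathrm{log}}_{\mathrm{conv}}$ with $\mathrm{sp}^\ast\mathcal F=\mathcal F^{\mathrm{an}}$; its associated logarithmic connection on $(\mathscr C_K,\mathscr N_K)$ is then, by unravelling the stratifications, the Gauss-Manin connection $\nabla^{\mathrm{GM}}$ discussed above. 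Uniqueness follows from the equivalence recalled in the first paragraph, and running the argument for all $m$ (or for the full complex) gives the statement as written.

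Finally, for the Frobenius compatibility I would use that $\mathscr C$, $\mathscr C(1)$ and $\mathscr C(2)$ carry Frobenius lifts — $\sigma_{\mathscr C}$ on the first and its products on the others — compatible with all the projections and diagonals, and that the embedding systems may be taken with compatible Frobenius endomorphisms (as in the Hyodo-Kato data \eqref{HKsystem}); hence the base change isomorphisms displayed above are compatible with the Frobenius structures, so $\varepsilon$ commutes with them and $\mathcal F$ is an $F$-isocrystal. I expect the main obstacle to be the convergence assertion in the previous paragraph: one must genuinely control the base change isomorphism over $\mathscr C(1)$ on the rigid-analytic log-tube $]C[^{\mathrm{log}}_{\mathscr C(1)}$ and not merely on the PD-neighbourhood, which is precisely the reason for passing through the log-analytic (rather than the log-crystalline) incarnation of the relative cohomology, and which is the crux of Shiho's \cite{shiho:rel1} 4.10, on which I would ultimately rely for this point.
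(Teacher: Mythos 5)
Your proposal is correct and in substance coincides with the paper's treatment: the paper offers no argument of its own for Proposition \ref{convcon}, stating it ``according to [SH08] 4.10'', and your reconstruction — base change isomorphisms over $\mathscr C(1)$, cocycle over $\mathscr C(2)$, convergence from working on the full log-tubes rather than PD-neighbourhoods, Frobenius compatibility from compatible lifts, uniqueness from the equivalence with convergently stratified modules — is precisely the content of that citation, on which you too ultimately rely for the decisive base-change/convergence step. No gap to report.
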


Hence we have the Gauss-Manin connections 
$$
     \begin{array}{l}
       \nabla_{\mathrm{conv}}^{\mathrm{GM}} : 
       \R^m f_{(X, M)/(\mathscr C, \mathscr N), \mathrm{conv} \ast}(\O_{\mathrm{conv},X, K}) 
       \rightarrow \R f_{(X, M)/(\mathscr C, \mathscr N), \mathrm{conv} \ast}(\O_{\mathrm{conv},X, K}) 
       \otimes_{\mathcal O_{\mathscr C}} \Omega_{(\mathscr C, \mathscr N)/\mathcal V}^1 \\
              \nabla_{\mathrm{an}}^{\mathrm{GM}} : 
       \R f_{(X, M)/(\mathscr C, \mathscr N), \mathrm{an} \ast}(\O_{\mathrm{an},X, K}) 
       \rightarrow \R f_{(X, M)/(\mathscr C, \mathscr N), \mathrm{an} \ast}(\O_{\mathrm{an},X, K}) 
       \otimes_{\mathcal O_{]C[_{\mathscr C}}} \Omega_{]C[_{\mathscr C}/K}^1<\mathscr N>
       \end{array}
$$
on both log-convergent and log-analytic cohomologies such that the Frobenius structures are horizontal. 

Let us now compare $\nabla_{\mathrm{crys}}^{\mathrm{GM}}$ with $\nabla_{\mathrm{conv}}^{\mathrm{GM}}$. 
If we denote the log-PD-envelope of the closed immersion 
$(X_\bullet, M_\bullet) \rightarrow (\mathcal P_\bullet, \mathcal M_\bullet)$ 
(resp. $(X_\bullet, M_\bullet) \rightarrow \mathcal P_\bullet(1)$) 
by $\mathscr{DP}_\bullet$ (resp. $\mathscr{DP}_\bullet(1)$), 
then the natural diagram 
$$
\left[\begin{array}{ccccc}
           \mathscr{DP}_\bullet &\overset{\mathrm{pr_1}}\leftarrow &\mathscr{DP}_\bullet(1) 
           &\overset{\mathrm{pr_2}}\rightarrow &\mathscr{DP}_\bullet \\
           \downarrow & &\downarrow & &\downarrow \\
          (\mathscr P_\bullet, \mathscr M_\bullet) &\overset{\mathrm{pr_1}}\leftarrow &\mathscr{P}_\bullet(1) 
           &\overset{\mathrm{pr_2}}\rightarrow&(\mathscr P_\bullet, \mathscr M_\bullet) \\
           \uparrow & &\uparrow & &\uparrow \\
          ]X_\bullet[_{\mathscr P_\bullet}^{\mathrm{log}} &\overset{\mathrm{pr_1}}\leftarrow 
          &]X_\bullet[_{\mathscr P_\bullet(1)}^{\mathrm{log}} 
           &\overset{\mathrm{pr_2}}\rightarrow &]X_\bullet[_{\mathscr P_\bullet}^{\mathrm{log}}  \\
          \end{array}\right]\, \, 
          \mathbf{\longrightarrow}\, \, 
\left[\begin{array}{ccccc}
           (\mathscr C, \mathscr N) &\overset{\mathrm{pr_1}}\leftarrow &\mathscr D(1) 
           &\overset{\mathrm{pr_2}}\rightarrow &(\mathscr C, \mathscr N) \\
           \downarrow & &\downarrow & &\downarrow \\
          (\mathscr C, \mathscr N) &\overset{\mathrm{pr_1}}\leftarrow &\mathscr C(1) 
           &\overset{\mathrm{pr_2}}\rightarrow&(\mathscr C, \mathscr N) \\
           \uparrow & &\uparrow & &\uparrow \\
          ]C[_{\mathscr C} &\overset{\mathrm{pr_1}}\leftarrow 
          &]C[_{\mathscr C(1)}^{\mathrm{log}} 
           &\overset{\mathrm{pr_2}}\rightarrow &]C[_{\mathscr C}  \\
          \end{array}\right]
$$
of ringed $G$-spaces with log-structures 
is commutative (the middle arrow means a map between the corresponding entries in the two parts of the diagram).  
Moreover, if there exists a global chart, then the left part of  diagram could be replaced by 
$$
\begin{array}{ccccc}
           \mathscr{DP}_\bullet &\overset{\mathrm{pr_1}}\leftarrow &\mathscr{DP}_\bullet(1) 
           &\overset{\mathrm{pr_2}}\rightarrow &\mathscr{DP}_\bullet \\
           \downarrow & &\downarrow & &\downarrow \\
          \widehat{\mathscr P}_\bullet^{\mathrm{ex}} &\overset{\mathrm{pr_1}}\leftarrow &\widehat{\mathscr P}_\bullet^{\mathrm{ex}}(1) 
           &\overset{\mathrm{pr_2}}\rightarrow&\widehat{\mathscr P}_\bullet^{\mathrm{ex}} \\
           \uparrow & &\uparrow & &\uparrow \\
          ]X_\bullet[_{\mathscr P_\bullet}^{\mathrm{log}} &\overset{\mathrm{pr_1}}\leftarrow 
          &]X_\bullet[_{\mathscr P_\bullet(1)}^{\mathrm{log}}
           &\overset{\mathrm{pr_2}}\rightarrow &]X_\bullet[_{\mathscr P_\bullet}^{\mathrm{log}}, 
          \end{array}
$$
where $\mathscr P_\bullet^{\mathrm{ex}}$ (resp. $\mathscr P_\bullet(1)^{\mathrm{ex}}$) is the exactification of the closed immersion 
$(X_\bullet, \mathscr M_\bullet) \rightarrow (\mathscr P_\bullet, \mathscr M_\bullet)$ 
(resp. $(X_\bullet, \mathscr M_\bullet) \rightarrow \mathscr P_\bullet(1)$) and 
$\widehat{\mathscr P}_\bullet^{\mathrm{ex}}$ (resp. $\widehat{\mathscr P}_\bullet(1)^{\mathrm{ex}}$) 
is the completion of $\mathscr P_\bullet^{\mathrm{ex}}$ (resp. $\mathscr P_\bullet(1)^{\mathrm{ex}}$) 
along $X_\bullet$. 
Since there exists a chart of the log-structure $\mathscr M_\bullet$ Zariski locally on $\mathscr P_\bullet$ and 
the log-tube $]X_\bullet[_{\mathscr P_\bullet}^{\mathrm{log}} 
= ]X_\bullet[_{\mathscr P_\bullet^{\mathrm{ex}}} = \widehat{\mathscr P}_{\bullet K}^{\mathrm{ex}}$ 
(resp. $]X_\bullet[_{\mathscr P_\bullet(1)}^{\mathrm{log}} 
= ]X_\bullet[_{\mathscr P_\bullet^{\mathrm{ex}}(1)} = \widehat{\mathscr P}_\bullet^{\mathrm{ex}}(1)_K$)
is independent of the choice of charts \cite{shiho:fund2} 2.2.4, 
we have a commutative diagram 
{\small $$
    \begin{array}{ccccc}
        \mathrm{pr}_2^\ast \mathrm{sp}_\ast\R^m f_{(X, M)/(\mathscr C, \mathscr N), \mathrm{an} \ast}(\O_{\mathrm{an},X, K})
     &\displaystyle{\mathop{\rightarrow}^{\cong}}
    & \mathrm{sp}_\ast\R^m f_{(X, M)/\mathscr C(1), \mathrm{an} \ast}(\O_{\mathrm{an},X, K})
     &\displaystyle{\mathop{\leftarrow}^{\cong}}
     &\mathrm{pr}_1^\ast \mathrm{sp}_\ast\R^m f_{(X, M)/(\mathscr C, \mathscr N), \mathrm{an} \ast}(\O_{\mathrm{an},X, K}) \\
     \downarrow & &\downarrow & &\downarrow \\
         \mathrm{pr}_2^\ast\R^m f_{(X, M)/(\mathscr C, {\mathscr N}), {\rm crys} \ast}({\O}_{{\rm crys}, X, K})
     &\displaystyle{\mathop{\rightarrow}^{\cong}}
     &\R^mf_{(X, M)/\mathscr D(1), {\rm crys} \ast}({\O}_{{\rm crys}, X, K})
    &\displaystyle{\mathop{\leftarrow}^{\cong}}
     &\mathrm{pr}_1^\ast\R^m f_{(X, M)/(\mathscr C, {\mathscr N}), {\rm crys} \ast } ({\O}_{{\rm crys}, X, K}) 
     \end{array}
$$}

\noindent
of $\mathcal O_{\mathscr C(1)}$-modules, where $\mathrm{sp}$ of both sides in the upper line is 
$\mathrm{sp} : \mathscr C_K \rightarrow \mathscr C$ and that of the middle is 
$\mathrm{sp} : ]C[_{\mathscr C(1)}^{\mathrm{log}} \rightarrow \widehat{\mathscr C}(1)$ for 
the completion $\widehat{\mathscr C}(1)$ of $\mathscr C(1)$ along $C$. 
Here we use the flatness of $\mathrm{pr}_\alpha$ to take the cohomology sheaves 
of degree $m$. 
Hence we have the proposition below.

\begin{prp} Under the isomorphism between the log-crystalline cohomology and the log-convergent cohomology 
in Theorem \ref{basechange3}, two Gauss-Manin connections $\nabla_{\mathrm{crys}}^{\mathrm{GM}}$ 
and $\nabla_{\mathrm{conv}}^{\mathrm{GM}}$ 
coincide with each other. 
\end{prp}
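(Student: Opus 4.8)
The plan is to read off the statement from the three-layered commutative diagram of ringed $G$-spaces with log-structures displayed just above, by passing to the $m$-th cohomology sheaves. Recall that $\nabla^{\mathrm{GM}}_{\mathrm{crys}}$ is, by construction, the connection attached to the HPD-stratification obtained from the pair of base-change isomorphisms relating $\mathrm{pr}_2^\ast\R^m f_{(X,M)/(\mathscr C,\mathscr N),\mathrm{crys}\ast}$ and $\mathrm{pr}_1^\ast\R^m f_{(X,M)/(\mathscr C,\mathscr N),\mathrm{crys}\ast}$ through $\R^m f_{(X,M)/\mathscr D(1),\mathrm{crys}\ast}$, computed via the crystalline complexes $C_{(X_\bullet,M_\bullet)/\mathscr D_i(1)}$; likewise $\nabla^{\mathrm{GM}}_{\mathrm{an}}$, and hence---after applying $\mathrm{sp}_\ast$ and invoking Proposition \ref{convcon} together with Theorem \ref{logan}---also $\nabla^{\mathrm{GM}}_{\mathrm{conv}}$, is the connection attached to the stratification coming from the base-change isomorphisms over $\mathscr C(1)$, computed via the logarithmic de Rham complexes on the log-tubes $]X_\bullet[^{\mathrm{log}}_{\mathscr P_\bullet(1)}$.

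First I would note that the comparison morphism \eqref{crysconv} is functorial in the embedding system, hence compatible with the two projections $\mathrm{pr}_1,\mathrm{pr}_2$: applying it over $\mathscr P_\bullet(1)$ and over each of the two copies of $\mathscr P_\bullet$, and using the commutativity of the displayed diagram of ringed $G$-spaces, produces a morphism from the ``$\mathscr C(1)$-level'' diagram of complexes computing log-analytic cohomology to the corresponding one computing log-crystalline cohomology. Here one uses, exactly as in the construction of \eqref{crysconv} itself, that the log-tube $]X_\bullet[^{\mathrm{log}}_{\mathscr P_\bullet(1)}$ is computed by the exactification and is independent of the chosen chart (\cite{shiho:fund2} 2.2.4), so that the middle column of the diagram is well defined. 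Then, using the flatness of $\mathrm{pr}_\alpha$ (both on $\mathscr C$ and on $\mathscr C_K$) to commute $\mathrm{pr}_\alpha^\ast$ with taking the degree-$m$ cohomology sheaf, I would pass to $\R^m$ and obtain precisely the commutative square of $\mathcal O_{\mathscr C(1)}$-modules displayed before the statement, whose vertical arrows are the isomorphisms of Theorem \ref{basechange3}, pulled back along $\mathrm{pr}_1,\mathrm{pr}_2$, respectively taken over $\mathscr C(1)$ and $\mathscr D(1)$.

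From this square the conclusion is immediate: the isomorphism of Theorem \ref{basechange3} carries the convergent stratification to the crystalline one, and since a connection is recovered from its (HPD-)stratification by restriction to the first infinitesimal, resp.\ first-order PD, neighborhood of the diagonal and reading off the difference $\mathrm{pr}_2^\ast-\mathrm{pr}_1^\ast$, the two Gauss-Manin connections are identified under Theorem \ref{basechange3}. Compatibility with the Frobenius structures is then automatic, every horizontal isomorphism in the square commuting with Frobenius by Theorem \ref{basechange3} and Proposition \ref{convcon}.

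I expect the only genuine work to be the bookkeeping in the second step: verifying that \eqref{crysconv} fits into a morphism of the full $\mathrm{pr}_1$--$\mathrm{pr}_2$ diagrams rather than merely over $\mathscr C$---that is, the commutativity of the diagram of ringed $G$-spaces together with the exactification and chart-independence statements---and that the passage to $\R^m$ is legitimate, which is where the flatness of the projections and the coherence/perfectness results (Theorems \ref{relcrys1}, \ref{basechange3}, \ref{logan}) enter.
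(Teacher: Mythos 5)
Your proposal follows essentially the same route as the paper: the paper's proof is precisely the preceding discussion, namely the commutative diagram of ringed $G$-spaces with log-structures relating the $\mathrm{pr}_1$--$\mathrm{pr}_2$ data over $\mathscr P_\bullet$, $\mathscr P_\bullet(1)$ to that over $\mathscr C$, $\mathscr C(1)$, $\mathscr D(1)$, the chart-independence/exactification of the log-tubes (\cite{shiho:fund2} 2.2.4) to make the comparison morphism \eqref{crysconv} compatible with both projections, and the flatness of $\mathrm{pr}_\alpha$ to pass to degree-$m$ cohomology sheaves, yielding the commutative square of $\mathcal O_{\mathscr C(1)}$-modules from which the identification of the two stratifications, hence of the two connections, is read off. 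Your identification of the genuine work (compatibility of \eqref{crysconv} with the full $\mathrm{pr}_1$--$\mathrm{pr}_2$ diagrams and the legitimacy of taking $\R^m$) matches exactly where the paper puts its effort.
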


\bigskip

By the existence of integrable connection on 
$\R^m  f_{(X,M)/(\mathscr C, {\mathscr N}), {\rm an} \ast }({\O}_{{\rm an}, X,K})$, which is 
horizontal with respect to the Frobenius structure, we have 

\begin{thr} \label{relan} With the previous notation, 
the log-analytic cohomology sheaf 
$\R^m  f_{(X,M)/(\mathscr C, {\mathscr N}), {\rm an} \ast }({\O}_{{\rm an}, X,K})$ 
is locally free on the rigid analytic space ${\mathscr C}_K$ 
furnished with the G-topology. 
\end{thr}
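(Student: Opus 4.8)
The plan is to use the fact, established just above, that $M:=\R^m f_{(X,M)/(\mathscr C, {\mathscr N}), {\rm an} \ast}(\O_{{\rm an},X,K})$ carries an integrable logarithmic Gauss--Manin connection $\nabla^{\rm GM}_{\rm an}$ (with at worst a logarithmic pole along $\hat s_K$) which is horizontal for the Frobenius structure, together with Theorem~\ref{logan}, which tells us that $M$ is already a coherent $\O_{\mathscr C_K}$-module and that its Frobenius is an isomorphism. Since $\mathscr C_K$ is a smooth rigid analytic curve, its local rings for the G-topology are regular of dimension $\le 1$, hence discrete valuation rings wherever they are not fields; for a coherent sheaf on such a space, local freeness is equivalent to torsion freeness. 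So the whole statement reduces to showing that the torsion subsheaf $T\subseteq M$ vanishes. Note that $T$ is coherent, that it is stable under $\nabla^{\rm GM}_{\rm an}$ (from $am=0$ and the Leibniz rule one gets $a^2\nabla m=0$, so $\nabla m$ is again torsion), and that its support is a discrete subset of $\mathscr C_K$.

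First I would treat the points of $\mathrm{supp}(T)$ lying in $\mathscr C_K\setminus\{\hat s_K\}$. Over that open the connection has no pole, so it is an honest integrable connection on a smooth rigid space, and the classical argument applies: if $T$ were nonzero at a point $x\ne\hat s_K$, choose a local coordinate $z$ at $x$ and an integer $k\ge 1$ with $z^kT_x=0$ but $z^{k-1}T_x\ne 0$; for $m\in T_x$ with $z^{k-1}m\ne 0$, applying $\nabla_{\partial_z}$ to $z^km=0$ and using $z^k\nabla_{\partial_z}m=0$ (as $\nabla_{\partial_z}m\in T_x$) yields $k\,z^{k-1}m=0$, hence $z^{k-1}m=0$ because $k$ is invertible in $K$ --- a contradiction. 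Thus $T$ is supported at $\hat s_K$ only.

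The hard part is the log point $\hat s_K$, where the previous argument breaks down: a logarithmic connection does not annihilate a skyscraper torsion sheaf there (the zero connection on $\O_{\mathscr C_K}/(t)$ is a perfectly good log-connection), so the Frobenius structure is genuinely needed. The key observation is that the chosen lift of Frobenius $\sigma_{\mathscr C}$ is a morphism of log formal $\mathcal V$-schemes lifting the Frobenius of $(C,N)$; since $\mathscr N$ is the log-structure $1\mapsto t$, this forces $\sigma_{\mathscr C}^\ast t=u\,t^p$ with $u$ a unit (there is no additive $p$-term, precisely because $\sigma_{\mathscr C}^\ast t$ must be a section of $\mathscr N$). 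Hence $\sigma:=\sigma_{\mathscr C,K}\colon\mathscr C_K\to\mathscr C_K$ is finite flat of degree $p$ and fixes $\hat s_K$ with ramification index $p$. Finite flatness gives that $\sigma^\ast T$ is the torsion subsheaf of $\sigma^\ast M$ and, computing lengths at $\hat s_K$ (a discrete valuation ring with uniformizer $t$ and residue field $K$), that $\mathrm{length}(\sigma^\ast T)=p\cdot\mathrm{length}(T)$. On the other hand the Frobenius structure, being an $\O_{\mathscr C_K}$-linear isomorphism $\sigma^\ast M\xrightarrow{\ \sim\ }M$, carries $\sigma^\ast T$ isomorphically onto $T$; comparing lengths gives $p\cdot\mathrm{length}(T)=\mathrm{length}(T)$, so $T=0$. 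Therefore $M$ is torsion free, hence locally free on $\mathscr C_K$ with the G-topology.

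I expect the only real obstacle to be making the behaviour of $\sigma$ near $\hat s_K$ precise --- that $\sigma$ is indeed finite flat and ramified of degree exactly $p$ there, with $\sigma^\ast t$ a unit multiple of $t^p$ --- and then the length bookkeeping under $\sigma^\ast$; everything else (the reduction to torsion freeness and the treatment away from $\hat s_K$) is routine. As an alternative to the Frobenius argument one could try to derive a contradiction from torsion at $\hat s_K$ using the base-change isomorphisms of Theorems~\ref{basechange1} and \ref{basechangeconv1} and the perfectness of the resulting $K$-complex, but the length count above is shorter and self-contained.
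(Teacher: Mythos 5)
Your proposal is correct and follows essentially the same route as the paper: local freeness away from $\hat s_K$ via the characteristic-$0$ connection, and vanishing of torsion at $\hat s_K$ by comparing lengths through the Frobenius isomorphism of Theorem \ref{logan}. The only difference is that you spell out what the paper leaves implicit, namely that $\sigma_{\mathscr C}^{\ast}t$ is a unit times $t^{p}$, so pulling back a skyscraper at $\hat s_K$ multiplies its length by $p$, which is exactly the ``modules with different lengths'' contradiction invoked in the paper's proof.
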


\begin{proof} Outside $\hat s_K$ in ${\mathscr C}_{K}$, 
the log-connection is a  usual connection, hence the sheaf is locally free, because we are in characteristic $0$. 
The problem is at $\hat  s_K$. Here it is enough  to check that there is no nontrivial torsion. 
The existence of the Frobenius structure forces an isomorphism between the original module and its transform by Frobenius. 
If we had a nontrivial torsion, we would have an isomorphism between modules with different lengths. 
This is a contradiction.
\end{proof}

\begin{crl} \label{locfree} The log-crystalline (resp. log-convergent) cohomology sheaf
$\R^m  f_{(X,M)/(\mathscr C, {\mathscr N}), {\rm crys} \ast }({\O}_{{\rm crys}, X,K})$ 
(resp. $\R^m  f_{(X,M)/(\mathscr C, {\mathscr N}), {\rm conv} \ast }({\O}_{{\rm conv}, X,K})$) 
is locally projective as $\O_{\mathscr C} \otimes K$-modules. In particular, we have a diagram 
where all the maps are  isomorphisms of finite dimensional $K$-vector spaces for all $m$:
\begin{equation} \label{diag4}
\xymatrix{
H^m_{ \rm log-conv}( (X_s,M_s)/{\cal V}^\times) \ar[d]\ar[r]
&H^{m}_{\rm log-crys}((X_s,M_s)/{\cal V}^\times) \otimes K \ar[d]\\
\R ^{m}  f_{(X,M) / (\mathscr C, {\mathscr N}), {\rm conv} \ast } ({\O}_{\mathrm{conv}, X,K})_{s^{\times}} \ar[r] 
&\R^{m} f_{(X, M)/(\mathscr C, {\mathscr N}), {\rm crys} \ast}({\O}_{\mathrm{crys}, X,K})_{s^{\times}}.}
\end{equation}
\end{crl}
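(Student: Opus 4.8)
The plan is to deduce the statement from the local freeness of the log-analytic cohomology on $\mathscr C_K$ established in Theorem \ref{relan}, transported to $\mathscr C$ by the comparison isomorphisms of Theorems \ref{basechange3} and \ref{logan}, and then to read off the diagram \eqref{diag4} from the base-change Theorems \ref{basechange1} and \ref{basechangeconv1}.

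First I would establish local projectivity. By Theorem \ref{relcrys1} the sheaf $\mathcal G^m := \R^m f_{(X,M)/(\mathscr C,{\mathscr N}),{\rm crys}\ast}(\O_{{\rm crys},X,K})$ is iso-coherent, hence a coherent module over $\O_{\mathscr C}\otimes K = \O_{\mathscr C}[1/p]$, and by Theorems \ref{basechange3} and \ref{logan} one has isomorphisms $\mathcal G^m \iso \R^m f_{(X,M)/(\mathscr C,{\mathscr N}),{\rm conv}\ast}(\O_{{\rm conv},X,K}) \iso \mathrm{sp}_\ast\R^m f_{(X,M)/(\mathscr C,{\mathscr N}),{\rm an}\ast}(\O_{{\rm an},X,K})$. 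I would then work Zariski-locally on $C$: for an affine open $U\subseteq C$, lift it to an affine smooth $\mathcal V$-curve $U_{\mathcal V}$ and let $A=\O_{\mathscr C}(U)$ be the $p$-adic completion of its coordinate ring, so that $A\otimes K$ is the affinoid algebra of the generic fibre $U_K:=\mathrm{sp}^{-1}(U)\subseteq\mathscr C_K$ and $\O_{\mathscr C_K}(U_K)=A\otimes K$. Since $\mathrm{sp}_\ast$ computes sections over tubes, $\mathcal G^m(U)$ is identified, as an $A\otimes K$-module, with the module of sections over $U_K$ of the log-analytic sheaf, which by Theorem \ref{relan} is locally free on $\mathscr C_K$; as $U_K$ is affinoid and $A\otimes K$ is Noetherian, this module is finitely generated and projective over $A\otimes K$. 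Letting $U$ vary, this is exactly the assertion that $\mathcal G^m$, and likewise $\R^m f_{(X,M)/(\mathscr C,{\mathscr N}),{\rm conv}\ast}(\O_{{\rm conv},X,K})$, are locally projective over $\O_{\mathscr C}\otimes K$. (Here $A\otimes K$ is a regular Noetherian ring of Krull dimension $\le 1$, so "projective" amounts to "finitely generated and torsion-free", and local freeness on $\mathscr C_K$ is precisely the vanishing of such torsion — the same point already used in the proof of Theorem \ref{relan}.)

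For the diagram \eqref{diag4} I would argue as follows. Local projectivity gives in particular that $\mathcal G^m$ and its log-convergent counterpart are flat over $\O_{\mathscr C}\otimes K$, so in the base-change (hyper-Tor) spectral sequence for $\mathbb L\hat\iota^\ast$ along the exact closed immersion $\hat\iota:\hat s^\times=\mathcal V^\times\to(\mathscr C,{\mathscr N})$ of \eqref{diag3} all higher Tor-terms vanish; hence $H^m(\mathbb L\hat\iota^\ast\R f_{(X,M)/(\mathscr C,{\mathscr N}),{\rm crys}\ast}(\O_{{\rm crys},X,K}))=\hat\iota^\ast\mathcal G^m=(\mathcal G^m)_{s^\times}$, which is a finite-dimensional $K$-vector space because $\hat s$ has residue field $K$. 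Comparing with Theorem \ref{basechange1} yields the right-hand vertical isomorphism of \eqref{diag4}, and Theorem \ref{basechangeconv1} yields the left-hand one. The two horizontal maps are obtained by applying $\hat\iota^\ast$ (respectively, the absolute comparison for $(X_s,M_s)$) to the isomorphism of Theorem \ref{basechange3}; functoriality of that isomorphism makes it compatible with $\mathbb L\hat\iota^\ast$, so the square commutes and all four maps are isomorphisms of finite-dimensional $K$-vector spaces.

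I expect the only delicate point to be the identifications in the second paragraph: one has to be sure that $\mathscr C$ is, locally on $C$, $p$-adically of finite type over $\mathcal V$ so that its generic fibre is affinoid with ring of functions $A\otimes K$, and that $\mathrm{sp}^{-1}(U)$ really is this affinoid — this is where the equalities $\mathscr C_K=\,]C[_{\mathscr C}=\,]C[_{\mathscr C}^{\mathrm{log}}$, the exactness of $(C,N)\to(\mathscr C,{\mathscr N})$, and the coincidence of the naive and logarithmic tubes for $\mathscr C$ are used. Once these are in place, the passage from "locally free on $\mathscr C_K$" to "locally projective $\O_{\mathscr C}\otimes K$-module", the degeneration of the base-change spectral sequence, and the commutativity of \eqref{diag4} are all formal.
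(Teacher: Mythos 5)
Your proposal is correct and follows exactly the route the paper intends: the corollary is stated without proof there, being deduced from the local freeness of the log-analytic sheaf (Theorem \ref{relan}) transported through the comparison isomorphisms of Theorems \ref{logan} and \ref{basechange3}, with the diagram \eqref{diag4} then obtained from the base-change Theorems \ref{basechange1} and \ref{basechangeconv1} once local projectivity (flatness) collapses $\mathbb L\hat\iota^\ast$ to $\hat\iota^\ast$ on each cohomology sheaf. Your Zariski-local identification of sections over $U$ with sections of the locally free log-analytic sheaf over the affinoid $\mathrm{sp}^{-1}(U)$, and the degeneration of the hyper-Tor spectral sequence, simply make explicit what the paper leaves implicit.
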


As a summary of the results, so far, 
we have the following isomorphism of finite dimensional $K$-spaces with Frobenius structures 
for the pullback at ${\hat s}^{\times}_K$ 
of $\R^mf_{(X,M)/(\mathscr C, {\mathscr N}), {\rm an} \ast}({\O}_{{\rm an}, X,K})$ 
({\it i.e.}, as in \eqref{diag3},  $t \mapsto 0$)  in ${\mathscr C}_K$:
\begin{equation} \label{concl1}
\R^mf_{(X,M)/ (\mathscr C, {\mathscr N}), {\rm an} \ast} ({\O}_{{\rm an}, X,K})_{{\hat s}^{\times}_K} 
\iso H^m_{\rm log-crys}((X_s,M_s)/{\cal V}^\times) \otimes K 
\end{equation}

 \medskip
 $$ \ast  \ast \ast$$

There is another way to introduce an integrable connection 
on the log-crystalline (resp. log-convergent, 
resp. log-analytic) cohomology sheaf 
$\R^m  f_{(X,M) / (\mathscr C, {\mathscr N}), {\rm crys} \ast } ({\O}_{\mathrm{crys}, X,K})$ 
(resp. $\R^m  f_{(X,M) / (\mathscr C, {\mathscr N}), {\rm conv} \ast } ({\O}_{\mathrm{conv}, X,K})$, 
resp. $\R^m  f_{(X,M) / (\mathscr C, {\mathscr N}), {\rm an} \ast } ({\O}_{\mathrm{an}, X,K})$) 
by using the spectral sequences studied by 
Katz and Oda \cite{KO}, sect. 3. The monodromy operator on the log-crystalline 
cohomology $H^m_{ \rm log-crys}( (X_s,M_s)/{\cal V}^\times) \otimes K$ introduced 
by Hyodo and Kato \cite{hy-ka} 3.6 is along this line. 

Let us now recall the construction. 
Let $C_{(X_\bullet. M_\bullet)/\mathcal V_i}^\bullet$ 
(resp. $\Omega_{]X_\bullet[_{\mathscr P_\bullet}^{\mathrm{log}}/K}^\bullet<{\mathscr M}_\bullet>$)
be the logarithmic de Rham complex of the log-PD-envelope 
of the closed immersion 
$\iota_\bullet : (X_\bullet, M_\bullet) \rightarrow (\mathscr P_\bullet, \mathscr M_\bullet)$ 
over $\mathcal V_i$, where $\mathcal V_i = \mathcal V/p^i$ is endowed with the trivial log-structure, 
(resp. the logarithmic de Rham complex of the simplicial rigid analytic space 
$]X_\bullet[_{\mathscr P_\bullet}^{\mathrm{log}}$ over $K$). 
We define decreasing filtrations 
$\mathrm{Fil}\sp q_{\mathrm{crys}, \mathscr C_i}$ of $C_{(X_\bullet, M_\bullet)/\mathcal V_i}^\bullet$ 
and $\mathrm{Fil}\sp q_{\mathrm{an}}$ (resp. $\mathrm{Fil}\sp q_{\mathrm{conv}}$) of 
$\Omega_{]X_\bullet[_{\mathscr P_\bullet}^{\mathrm{log}}/K}^\bullet<{\mathscr M}_\bullet>$ 
(resp. $\mathrm{sp}_\ast\Omega_{]X_\bullet[_{\mathscr P_\bullet}^{\mathrm{log}}/K}^\bullet<{\mathscr M}_\bullet>$) by 
$$
    \begin{array}{l}
     \mathrm{Fil}\sp 0_{\mathrm{crys}, \mathscr C_i} = C_{(X_\bullet, M_\bullet)/\mathcal V_i}^\bullet \\
     \mathrm{Fil}\sp 1_{\mathrm{crys}, \mathscr C_i} = \mathrm{Im}
     (C_{(X_\bullet, M_\bullet)/\mathcal V_i}^{\bullet-1} \otimes 
     \Omega^1_{(\mathscr C_i, \mathcal N_i)/\mathcal V_i} \rightarrow 
     C_{(X_\bullet, M_\bullet)/\mathcal V_i}^\bullet) \hspace*{27mm} \\
     \mathrm{Fil}\sp 2_{\mathrm{crys}, \mathscr C_i} = 0
     \end{array}
 $$
and 
$$
    \begin{array}{l}
     \mathrm{Fil}\sp 0_{\mathrm{an}} = \Omega_{]X_\bullet[_{
     \mathscr P_\bullet}^{\mathrm{log}}/K}^\bullet<{\mathscr M}_\bullet>  \\
     \mathrm{Fil}\sp 1_{\mathrm{an}} = \mathrm{Im}
     (\Omega_{]X_\bullet[_{\mathscr P_\bullet}^{\mathrm{log}}/K}^{\bullet-1}<{\mathscr M}_\bullet> 
     \otimes \Omega^1_{]C[_{\mathscr C}/K}<\mathscr N> \rightarrow 
     \Omega_{]X_\bullet[_{\mathscr P_\bullet}^{\mathrm{log}}/K}^{\bullet}<{\mathscr M}_\bullet>) \\
     \mathrm{Fil}\sp 2_{\mathrm{an}} = 0
     \end{array}
 $$
(resp. $\mathrm{Fil}\sp q_{\mathrm{conv}} = \mathrm{sp}_\ast\mathrm{Fil}\sp q_{\mathrm{an}}$ 
where $\mathrm{sp} : ]X_\bullet[_{\mathscr P_\bullet} \rightarrow {\widehat{\mathscr P}}_\bullet$ is the specialization map), 
respectively. Since 
$$
    0 \rightarrow 
    g^\ast\Omega^1_{(\mathscr C_i, \mathcal N_i)/\mathcal V_i} 
       \rightarrow
       \Omega^1_{(\mathscr P_{\bullet, \mathscr V_i}, \mathcal M_{\bullet, i})/\mathcal V_i}
      \rightarrow 
      \Omega^1_{(\mathscr P_{\bullet, \mathscr V_i}, \mathcal M_{\bullet, i})/
      (\mathscr C_i, \mathcal N_i)} 
      \rightarrow 0
$$
is an exact sequence of sheaves of locally free 
$\O_{\mathscr P_\bullet \otimes \mathcal V_i}$-modules of finite type, we have 
$$
   \begin{array}{l} 
      \mathrm{gr}^0_{\mathrm{crys}, \mathscr C_i} 
      = \mathrm{Fil}^0_{\mathrm{crys}, \mathscr C_i}/\mathrm{Fil}^1_{\mathrm{crys}, \mathscr C_i}
       = C_{(X_\bullet, M_\bullet)/(\mathscr C_i, \mathcal N_i)}^\bullet \\
      \mathrm{gr}^1_{\mathrm{crys}, \mathscr C_i} 
      = \mathrm{Fil}^1_{\mathrm{crys}, \mathscr C_i}/\mathrm{Fil}^2_{\mathrm{crys}, \mathscr C_i}
       = g^\ast\Omega^1_{(\mathscr C_i, \mathcal N_i)/\mathcal V_i} 
       \otimes_{\O_{\mathscr P_\bullet \otimes \mathcal V_i}}
       C_{(X_\bullet, M_\bullet)/(\mathscr C_i, \mathcal N_i)}^\bullet[-1], 
       \end{array}
$$
where $g : \mathscr P_\bullet \rightarrow \mathscr C$ in \eqref{HKsystem}. 
Here $(C^\bullet, d^\bullet)[-1]$ means the $-1$-shift, that is, $(C^{\bullet-1}, -d^{\bullet-1})$. 
The similar hold for the filtered complexes $(\mathrm{sp}_\ast\Omega_{]X_\bullet[_{
     \mathscr P_\bullet}^{\mathrm{log}}/K}^\bullet<{\mathscr M}_\bullet>, \mathrm{Fil}\sp q_{\mathrm{conv}})$ 
and 
$(\Omega_{]X_\bullet[_{
     \mathscr P_\bullet}^{\mathrm{log}}/K}^\bullet<{\mathscr M}_\bullet>, \mathrm{Fil}\sp q_{\mathrm{an}})$. 
By this decreasing filtration we have a spectral sequence 
$$
      \underline{E}\sb 1\sp{qr} 
      = \R\sp{q + r}(f\theta)_\ast\mathrm{gr}^q_{\mathrm{crys}, \mathscr C_i}\, 
       \Rightarrow\,  
          \R\sp{q + r}(f\theta)_\ast C_{(X_\bullet, M_\bullet)/\mathcal V_i}^\bullet
$$
where
$$
   \begin{array}{l} 
    \underline{E}\sb 1\sp{0r} 
       = \R^r(f\theta)_\ast C_{(X_\bullet, M_\bullet)/(\mathscr C_i, \mathcal V_i)}^\bullet \\
           \underline{E}\sb 1\sp{1r} 
       = \R^r(f\theta)_\ast C_{(X_\bullet, M_\bullet)/(\mathscr C_i, \mathcal V_i)}^\bullet 
       \otimes_{\O_{\mathscr C_i}} 
       \Omega_{(\mathscr C_i, {\mathscr N}_i)/\mathcal V_i}^1.
    \end{array}
$$
Then the differential 
$d\sb 1\sp{0r} : \underline{E}\sb 1\sp{0r} \rightarrow \underline{E}\sb 1\sp{1r}$ 
gives an integrable $\mathcal V_i$-connection 
$$
      d\sb 1\sp{0r} : 
      \R^r(f\theta)_\ast C_{(X_\bullet, M_\bullet)/(\mathscr C_i, \mathcal V_i)}^\bullet
      \rightarrow  \R^r(f\theta)_\ast C_{(X_\bullet, M_\bullet)/(\mathscr C_i, \mathcal V_i)}^\bullet
      \otimes_{\O_{\mathscr C_i}} 
       \Omega_{(\mathscr C_i, {\mathscr N}_i)/\mathcal V_i}^1. 
$$
By taking the limit on $i$ and tensoring  with $ K$, we have a $K$-connection 
$$
    \nabla_{\mathrm{crys}}^{\mathrm{KO}} : 
      \R ^{m}  f_{(X, M)/(\mathscr C, {\mathscr N}), {\rm crys} \ast }({\O}_{\mathrm{crys}, X,K})
      \rightarrow \R ^{m}f_{(X, M)/(\mathscr C, {\mathscr N}), {\rm crys} \ast }({\O}_{\mathrm{crys}, X,K}) \otimes_{\O_{\mathscr C}}
       \Omega_{(\mathscr C, {\mathscr N})/\mathcal V}^1. 
$$
by Proposition \ref{consist}. 
We also have the integrable connections $\nabla_{\mathrm{conv}}^{\mathrm{KO}}$ and 
$\nabla_{\mathrm{an}}^{\mathrm{KO}}$ for the log-convergent cohomology and 
the log-analytic cohomology by the similar way. 
Moreover, they are compatible via the comparison isomorphisms in Theorems \ref{basechange3} and \ref{logan}.

\begin{thr} \label{GM} Two connections $\nabla_{\mathrm{crys}}^{\mathrm{GM}}$ 
and $\nabla_{\mathrm{crys}}^{\mathrm{KO}}$ introduced above on 
the log-crystalline cohomology 
coincide with each other. The same holds for the log-convergent (resp. log-analytic) cohomology. 
\end{thr}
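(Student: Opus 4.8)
The plan is to reduce the statement to an explicit local computation of Katz--Oda type, which in the non-logarithmic crystalline situation is exactly \cite{KO}, \S3 and \cite{Ber:Cri74} V, 3.6, and then to transport the log-crystalline case to the log-convergent and log-analytic cases by the compatibilities already recorded above. First I would note that both $\nabla^{\mathrm{GM}}_{\mathrm{crys}}$ and $\nabla^{\mathrm{KO}}_{\mathrm{crys}}$ are built by functorial procedures which commute with Zariski localization on $\mathscr C$ and with passage to each level of the \'etale hypercovering and of the embedding system. Since a connection on a locally free $\O_{\mathscr C}\otimes K$-module (Corollary \ref{locfree}) is determined by its restriction to a Zariski cover, we may assume $\mathscr C$ affine and that $\mathscr N$, as well as $\mathscr M_\bullet$ Zariski-locally on $\mathscr P_\bullet$, admit global charts; then the exactifications $\widehat{\mathscr P}_\bullet^{\mathrm{ex}}$, $\widehat{\mathscr P}_\bullet(1)^{\mathrm{ex}}$ appearing in the second displayed diagram before the statement are available globally, and we may further choose, \'etale-locally, logarithmic coordinates on $\widehat{\mathscr P}_\bullet^{\mathrm{ex}}$ extending the coordinate $t$ on $\mathscr C$.

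Next I would recall the two constructions in this local model. The absolute log-crystalline cohomology is computed by the logarithmic de Rham complex $C^\bullet_{(X_\bullet,M_\bullet)/\mathcal V_i}$ of the log-PD-envelope $\mathscr{DP}_\bullet$ of $(X_\bullet,M_\bullet)\hookrightarrow(\mathscr P_\bullet,\mathscr M_\bullet)$ over $\mathcal V_i$, and by construction $\nabla^{\mathrm{KO}}_{\mathrm{crys}}$ is the differential $d_1^{0,r}$ of the spectral sequence attached to $\mathrm{Fil}^\bullet_{\mathrm{crys},\mathscr C_i}$, which is read off from the exact sequence $0\to g^\ast\Omega^1_{(\mathscr C_i,\mathcal N_i)/\mathcal V_i}\to \Omega^1_{(\mathscr P_{\bullet},\mathcal M_{\bullet})/\mathcal V_i}\to \Omega^1_{(\mathscr P_{\bullet},\mathcal M_{\bullet})/(\mathscr C_i,\mathcal N_i)}\to 0$. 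On the other side, $\nabla^{\mathrm{GM}}_{\mathrm{crys}}$ is the connection attached, by the usual dictionary, to the HPD-stratification coming from base change along $\mathrm{pr}_\alpha:\mathscr D_i(1)\to(\mathscr C_i,\mathscr N_i)$ (cf. \cite{shiho:rel1} 1.21): the log-PD-envelope $\mathscr{DP}_\bullet(1)$ of $(X_\bullet,M_\bullet)\hookrightarrow\mathscr P_\bullet(1)$ computes the cohomology over $\mathscr D_i(1)$, the two projections $\mathscr{DP}_\bullet(1)\to\mathscr{DP}_\bullet$ induce the base-change isomorphisms, and the stratification is their comparison.

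The comparison itself is then the classical Taylor-expansion computation. In the chosen coordinates, $\mathscr{DP}_\bullet(1)$ is a log-PD-polynomial algebra over $\mathscr{DP}_\bullet$ in the variables $\tau_j=1\otimes x_j-x_j\otimes1$ together with the base variable $\tau_0=1\otimes t-t\otimes1$; the stratification is $\theta(\omega)=\sum_{\underline n}\nabla_{\underline n}(\omega)\,\underline\tau^{[\underline n]}$, and extracting the coefficient of $\tau_0$ identifies the first-order part of $\theta$ in the base direction with $d_1^{0,r}$ of the Katz--Oda filtration; this is the argument of \cite{KO}, \S3, transported to the logarithmic PD-setting as in \cite{Ka:log} 6.2, \cite{BO78} 3.x, and the commutative diagram of ringed $G$-spaces displayed before the statement, together with the independence of the log-tube of the chart \cite{shiho:fund2} 2.2.4, shows that the same computation applies verbatim to the analytic incarnation. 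For the log-convergent and log-analytic cases I would not redo the computation: $\nabla^{\mathrm{KO}}_{\mathrm{conv}}$ and $\nabla^{\mathrm{KO}}_{\mathrm{an}}$ correspond to $\nabla^{\mathrm{KO}}_{\mathrm{crys}}$ under the comparison isomorphisms of Theorems \ref{basechange3} and \ref{logan}, as already observed, while $\nabla^{\mathrm{GM}}_{\mathrm{conv}}$ corresponds to $\nabla^{\mathrm{GM}}_{\mathrm{crys}}$ by the preceding proposition and $\nabla^{\mathrm{GM}}_{\mathrm{an}}$ to $\nabla^{\mathrm{GM}}_{\mathrm{conv}}$ via $\mathrm{sp}$ and Theorem \ref{logan}; hence the equality propagates.

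I expect the main obstacle to be bookkeeping rather than anything conceptual: keeping the logarithmic charts, the exactifications and the simplicial indices mutually consistent throughout the Taylor argument, and verifying that $\mathrm{Fil}^\bullet_{\mathrm{crys},\mathscr C_i}$ and the stratification are genuinely induced from the \emph{same} short exact sequence of log-differentials (equivalently, that the first infinitesimal neighbourhood cut out of $\mathscr D_i(1)$ matches the $\mathrm{gr}^1$ piece of the filtration). Since the excerpt has already established the commutativity of the large diagram relating $\mathscr{DP}_\bullet$, $\mathscr{DP}_\bullet(1)$, $\mathscr D(1)$ and $]X_\bullet[^{\mathrm{log}}_{\mathscr P_\bullet}$, the essential geometric input is in place, and what remains is to spell out the coordinate computation and its compatibility with the specialization maps.
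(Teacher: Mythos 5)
Your proposal is correct in outline, but it takes a genuinely different route from the paper. The paper's proof is essentially two lines: away from $s$ the log-structures are trivial and the coincidence of the stratification-defined and the Katz--Oda connections is exactly the classical statement of \cite{Ber:Cri74} V, 3.6.4; since by Corollary \ref{locfree} the cohomology sheaf is a coherent, locally projective $\O_{\mathscr C}\otimes K$-module, the difference of the two connections is an $\O$-linear map into a torsion-free module vanishing on the complement of $s$, hence vanishes identically; the log-convergent and log-analytic cases then follow from the comparison theorems, just as you say in your last step. You instead propose to redo the Katz--Oda Taylor-expansion computation directly in the logarithmic PD-setting, identifying the first-order term of the HPD-stratification over $\mathscr D(1)$ with the differential $d_1^{0,r}$ of the filtration $\mathrm{Fil}^\bullet_{\mathrm{crys},\mathscr C_i}$. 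That route is viable and in fact proves the more general coincidence that the paper's remark after the theorem explicitly defers ("one should prove the coincidence ... for more general situations"), since it does not use that the log-structure is generically trivial; the price is the bookkeeping you anticipate, and one point there needs care: in the exactified log-PD-envelope the correct base variable is not $\tau_0=1\otimes t-t\otimes 1$ but the unit-type coordinate $(1\otimes t)/(t\otimes 1)-1$ (and similarly for the $x_j$ lying in the divisor), so the extraction of the first-order coefficient must be done in these logarithmic coordinates, consistently with $dt/t$ generating $\Omega^1_{(\mathscr C,\mathscr N)/\mathcal V}$. With that adjustment your local computation, combined with the localization argument via Corollary \ref{locfree} and the transport to the convergent and analytic theories via Theorems \ref{basechange3} and \ref{logan} and the preceding proposition, gives a complete (if longer) proof; the paper's argument buys brevity by exploiting the special geometry (one log point on a curve), while yours buys generality.
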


\begin{proof} Outside $s$ in $\mathscr C$, the coincidence is proved in 
\cite{Ber:Cri74} V, 3.6.4. Hence the two connections coincide 
on $\mathscr C$ since the cohomology sheaf 
$\R^m f_{(X,M)/(\mathscr C, {\mathscr N}), {\rm crys} \ast}(\O_{\mathrm{crys}, X,K})$ 
is a coherent and locally projective $\O_{\mathscr C}$-module. In the case of the log-convergent (resp. log-analytic) cohomology, 
the coincidence follows from the log-crystalline case and the comparison theorem. 
\end{proof}

\vspace*{3mm}

\begin{rmk} One should prove the coincidence of two definitions of Gauss-Manin connections 
in the log-crystalline (resp. log-convergent, resp. log-analytic) cohomology 
for more general situations. But in this paper we deal with what we need for $p$-adic Clemens-Schmid 
exact sequence. 
\end{rmk}

 \medskip
 $$ \ast  \ast \ast$$

The locally free module $\R^m f_{(X,M)/(\mathscr C, {\mathscr N}), {\rm an} \ast}({\O}_{{\rm an}, X,K})$ 
is free on $]s[_{\mathscr C} =D(0,1^{-})$ 
(the unit open disc) since $\Gamma(]s[_{\mathscr C}, \O_{]s[_{\mathscr C}})$ 
is a Bezout ring by \cite{Laz}, Corollary of Proposition 4. 
Then there exists a finite dimensional $K$-vector space $V_m$ endowed with a nilpotent endomorphism $N_m$ 
and a Frobenius structure $F_m$ satisfying $N_mF_m = pF_mN_m$ such that 
\begin{equation}\label{residue}
(\R^mf_{(X,M)/(\mathscr C, {\mathscr N}), {\rm an} \ast}({\O}_{{\rm an}, X,K})|_{]s[_{\mathscr C}}, 
\nabla, \varphi_m)\, 
\cong (V_m \otimes \O_{]s[_{\mathscr C}}, N_m \otimes 1 + 1 \otimes d, 
F_m \otimes \sigma_{\mathscr C}) 
\end{equation}
and that the vector space $V_m$ is isomorphic to 
$\R^mf_{(X,M)/ (\mathscr C, {\mathscr N}), {\rm an} \ast} ({\O}_{{\rm an}, X,K})_{{\hat s}^{\times}_K}$. 
Here $\nabla$ (resp. $\varphi_m$) is the induced Gauss-Manin connection (resp. the induced Frobenius structure) on 
$\R^mf_{(X,M)/(\mathscr C, {\mathscr N}), {\rm an} \ast}({\O}_{{\rm an}, X,K})$. 
In fact on the open unit disk $]s[_{\mathscr C} =D(0,1^{-})$ the connection reduces to a differential system 
with log-singularities endowed with a Frobenius structure. 
Hence it satisfies the hypotheses of Christol's transfer theorem \cite{chr}, Theorem 4. 
We may then find a new basis such that the matrix which represents the log-connection 
with respect to this basis is given by the constant matrix $N_m$ and the Frobenius action is 
given by $F_m$. $(V_m, N_m, F_m)$ is called the residue of 
$\R^mf_{(X,M)/(\mathscr C, {\mathscr N}), {\rm an} \ast}({\O}_{{\rm an}, X,K})$ 
at $\hat{s}^{\times}_K$.
     
On the log-crystalline cohomology $H^m_{\rm log-crys}((X_s, M_s)/{\cal V}^\times) \otimes K$ which is the right part of \eqref{concl1}, 
Hyodo and Kato introduced a monodromy operator 
with Frobenius structures \cite{hy-ka} 3.6. We would like now to prove the following 
comparison. 

\begin{thr} \label{monodromy} The isomorphism \eqref{concl1} induces an isomorphism 
$$
      V_m \iso H^m_{\rm log-crys}((X_s, M_s)/{\cal V}^\times) \otimes K
$$
of $K$-spaces which is compatible with nilpotent endomorphisms and Frobenius structures. 
\end{thr}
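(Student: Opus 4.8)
The plan is to reduce the statement to the compatibility of two descriptions of the monodromy operator: the one coming from the residue of the Gauss--Manin connection on the nearby disc $]s[_{\mathscr C}$, and the one introduced by Hyodo--Kato on the log-crystalline cohomology of $(X_s,M_s)/\mathcal V^\times$. The isomorphism \eqref{concl1} identifies the two underlying $K$-vector spaces and is known (from Theorem \ref{basechange1}, Theorem \ref{basechange3} and Theorem \ref{logan}, together with Corollary \ref{locfree}) to be compatible with Frobenius. So the content is that under \eqref{concl1} the nilpotent operator $N_m$ on $V_m$ — which by construction in \eqref{residue} is the residue at $\hat s_K^\times$ of $\nabla^{\mathrm{GM}}$ on the log-analytic cohomology sheaf — goes to the Hyodo--Kato monodromy $N$.

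First I would recall that the Hyodo--Kato monodromy on $H^m_{\rm log-crys}((X_s,M_s)/\mathcal V^\times)\otimes K$ is, by \cite{hy-ka} 3.6, exactly the residue of the Katz--Oda type connection $\nabla^{\mathrm{KO}}$ attached to the tower of log-schemes $(X_s,M_s)\to \mathcal V^\times \to \mathcal V$, read off from the short exact sequence of differentials $0\to \Omega^1_{\mathcal V^\times/\mathcal V}\to \Omega^1_{(X_s,M_s)/\mathcal V}\to \Omega^1_{(X_s,M_s)/\mathcal V^\times}\to 0$; concretely one uses $\Omega^1_{\mathcal V^\times/\mathcal V}=\mathcal V\cdot d\log t$ and the boundary map in the associated filtration spectral sequence. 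On the other side, by Theorem \ref{GM} the Gauss--Manin connection $\nabla^{\mathrm{GM}}$ on the log-analytic (equivalently log-crystalline) cohomology sheaf over $(\mathscr C,\mathscr N)$ agrees with the Katz--Oda connection $\nabla^{\mathrm{KO}}_{\mathrm{crys}}$ built from the analogous filtration on $C_{(X_\bullet,M_\bullet)/\mathcal V_i}$ relative to $(\mathscr C_i,\mathscr N_i)$. So the comparison reduces to: the residue at $s$ of the relative Katz--Oda connection over $(\mathscr C,\mathscr N)$ specialises, under the base-change isomorphisms of Theorems \ref{basechange1}, \ref{basechangeconv1}, to the absolute Katz--Oda connection of $(X_s,M_s)/\mathcal V^\times$.

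The key step is therefore a compatibility of filtration spectral sequences under the cartesian base change in \eqref{diag3}: the exactification of $i_\bullet$ along the special point and the log-PD-envelope behave well under pullback by $\hat\iota : \mathcal V^\times \to (\mathscr C,\mathscr N)$, so that the filtered complex computing $\nabla^{\mathrm{KO}}_{\mathrm{crys}}$ pulls back (using the local coordinate $t$, hence the identification $\hat\iota^\ast\Omega^1_{(\mathscr C,\mathscr N)/\mathcal V}\simeq \Omega^1_{\mathcal V^\times/\mathcal V}$, $dt/t \leftrightarrow d\log t$) to the filtered complex computing the Hyodo--Kato monodromy. Taking $\mathbb R^m(f\theta)_\ast$, the flat base change (as in Theorem \ref{basechange1}) turns the residue of $d_1^{0r}$ for the family into the connecting map defining $N$ on the fibre. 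Concretely: write the family connection near $s$ in the constant form $N_m\otimes 1 + 1\otimes d$ of \eqref{residue} via Christol's transfer; its residue at $t=0$ is $N_m$; and the base-change identification of that residue with the Hyodo--Kato operator is exactly the statement that pulling the Katz--Oda filtration back along $t\mapsto 0$ gives the Katz--Oda filtration for the log-point.

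I expect the main obstacle to be precisely this last base-change compatibility for the filtered de Rham complexes: one must check that exactification, the log-PD-envelope, and the two-step filtrations $\mathrm{Fil}^\bullet$ are compatible with the non-flat-looking but cartesian pullback along the exact closed immersion $\hat\iota$, and that the identification $dt/t \mapsto d\log t$ matches the normalisations used by Hyodo--Kato in \cite{hy-ka} 3.6 (including the $p$-adic normalisation making $N_mF_m = pF_mN_m$). Since both connections are known to be horizontal for Frobenius (Theorem \ref{GM} and the construction of \eqref{residue}) and the underlying spaces are finite-dimensional, once the filtered-complex compatibility is in place the conclusion — compatibility of $N_m$ with $N$, and of the Frobenius structures — follows formally by taking cohomology and specialising. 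A clean way to organise the argument is to do everything at finite level modulo $p^i$, where all the PD-envelopes and exactifications are honest schemes, invoke \cite{Ka:log} 6.4, 6.10 for the base-change of log-crystalline cohomology of flat log-crystals, and then pass to the limit and tensor with $K$ using Proposition \ref{consist}.
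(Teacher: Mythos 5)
Your proposal follows essentially the same route as the paper: the paper also reduces to identifying the residue of the Gauss--Manin connection with the Hyodo--Kato operator, recalls the Hyodo--Kato construction via the two-step filtration/connecting homomorphism (the sequence $0 \to C_{(X,M)/(\mathscr C_i,\mathscr N_i)}[-1] \to C_{(X,M)/\mathcal V_i} \to C_{(X,M)/(\mathscr C_i,\mathscr N_i)} \to 0$ given by $\omega \mapsto dt/t \wedge \omega$), specializes it along $t \mapsto 0$ using the \'etaleness of $\mathscr C \to \mathrm{Spf}\,\widehat{\mathcal V[t]}$ to recover the Hyodo--Kato sequence on the fibre, and invokes Theorem \ref{GM} to identify the connecting map with $\nabla^{\mathrm{GM}}$, exactly the base-change-of-filtered-complexes step you single out as the key point. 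So the proposal is correct and matches the paper's argument.
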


\begin{proof} It is sufficient to prove the monodromy operator $N_m$ 
coincides with Hyodo-Kato's monodromy operator 
under the isomorphism \eqref{concl1}. 
We recall the construction of the monodromy operator on the log-crystalline cohomology 
$H^m_{\rm log-crys}((X_s,M_s)/{\cal V}^\times) \otimes K$ following \cite{hy-ka} 3.6. 

Shrinking $C$, we may assume that $\mathscr C \rightarrow \mathrm{Spf}\, \widehat{\mathcal V[t]}$ 
is \'etale where $t$ is a local coordinate of $\hat{s}$ over $\mathcal V$ and $\widehat{\mathcal V[t]}$ is a $p$-adic completion 
of the polynomial ring $\mathcal V[t]$. 
Let $C_{(X, M)/\mathcal V_i}$ the crystalline complex of $(X, M)$ 
over $\mathcal V_i$ on the Zariski site on $X_\bullet$. 
We then have an exact sequence for each $n$ as in \cite{hy-ka} 3.6: 

\begin{equation}\label{crys0}
0 \rightarrow C_{(X, M)/({\mathscr C}_i, \mathscr N_i)}[-1] \rightarrow C_{(X, M)/{{\cal V}_i}} 
\rightarrow C_{(X, M)/({\mathscr C}_i, \mathscr N_i)} \rightarrow 0, 
\end{equation}

\noindent
where the first map is given by the external multiplication by $\omega \mapsto dt/t\wedge \omega$. 
Note that $(C^\bullet, d^\bullet)[-1] = (C^{\bullet-1}, -d^{\bullet-1})$ by definition. 
The exact sequence 
\begin{equation}\label{crys2}
0 \rightarrow C_{(X, M)/({\mathscr C}_i, \mathscr N_i)} \otimes_{{\cal O}_{\mathscr C_i}} {\cal V}_i[-1] 
\rightarrow C_{(X, M)/{{\cal V}_i}} \otimes_{{\cal O}_{\mathscr C_i}} {\cal V}_i
\rightarrow C_{(X, M)/({\mathscr C}_i, \mathscr N_i)} 
\otimes_{{\cal O}_{\mathscr C_i}} {\cal V}_i \rightarrow 0
\end{equation}
induced by tensoring \eqref{crys0} with ${\cal O}_{\mathscr C_i} \rightarrow {\cal V}_i$, 
which is the reduction map 
modulo the ideal of definition of the finite flat subscheme $\widehat{s}$ of $\mathscr C$ 
over $\cal V$ (then $t$ maps to $0$),  
is nothing but the second exact sequence in \cite{hy-ka} 3.6 
since $\mathscr C \rightarrow \mathrm{Spf}\, \widehat{\mathcal V[t]}$ 
is \'etale. Hence the connecting homomorphisms on the  cohomology  groups 
with respect to the short exact sequence (\ref{crys2}) induce Hyodo-Kato's monodromy operator on 
$H^m_{\rm log-crys}((X_s, M_s)/{\cal V}^\times) \otimes K$ by the isomorphism 
$$
          H^m_{\rm log-crys}((X_s, M_s)/{\cal V}^\times) \otimes K 
      \iso  (\varprojlim\R^m(f_{s} \theta_{s})_{ \ast} C_{(X, M)/({\mathscr C}_i, \mathscr N_i), s^{\times}}) \otimes K,
$$
where $C_{(X, M)/({\mathscr C}_i, \mathscr N_i), s^{\times}}$ is the fiber of 
$C_{(X, M)/({\mathscr C}_i, \mathscr N_i)}$ at $s^\times$ and it is isomorphic to the crystalline complex 
of $(X_s, M_s)/s^\times$ with respect to the embedding system 
$(X_{s, \bullet}, M_{s, \bullet}) \rightarrow ({\mathscr P}_\bullet, \mathscr M_\bullet) 
\otimes_{\O_{\mathscr C}}\, {\cal V}\, \, 
(\O_{\mathscr C_\mathcal V} \rightarrow {\cal V}, \, \,  t \mapsto 0)$ 
induced from the diagram \eqref{HKsystem}. 

Since the connecting homomorphism on the cohomology sheaves with respect 
to the short exact sequence \eqref{crys0} is just the edge homomorphism of $E_1$-terms 
of the Leray spectral sequences, it coincides with the Gauss-Manin connection on 
the crystalline cohomology sheaves by Theorem \ref{GM}. 
Hence, two monodromy operators coincides with each other under the isomorphism \eqref{concl1}. 
\end{proof}

Let us now consider an exact sequence 
\begin{equation}
\label{convseq}
      0 \rightarrow \Omega^{\bullet}_{{] {{ X}_{\bullet}}[^{\rm log}_{{{\mathscr P}}_{\bullet}}}/{{\mathscr C}_K}} \! \!  \! 
< {{\mathscr M}_\bullet}/ {\mathscr N}>[-1] 
      \rightarrow   \Omega^{\bullet}_{{] {{ X}_{\bullet}}[^{\rm log}_{{{\mathscr P}}_{\bullet}}}} \! \!  \! 
< {{\mathscr M}_\bullet}>
      \rightarrow 
      \Omega^{\bullet}_{{] {{ X}_{\bullet}}[^{\rm log}_{{{\mathscr P}}_{\bullet}}}/{{\mathscr C}_K}} \! \!  \! 
< {{\mathscr M}_\bullet}/ {\mathscr N}> \rightarrow 0
\end{equation}
of $\O_{]X_\bullet[_{{\mathscr P}_\bullet/{\mathscr C}_K}}$-modules, 
where $ \Omega^{\bullet}_{{] {{ X}_{\bullet}}[^{\rm log}_{{{\mathscr P}}_{\bullet}}}} \! \!  \! 
< {{\mathscr M}_\bullet}>$ is the logarithmic de Rham complex 
of $(X, M)/K$ with respect to the embedding system (\ref{HKsystem}) 
and the first map is given by the external multiplication by $\omega \mapsto dt/t\wedge \omega$. 
Applying $\R  \Gamma(]X_{s,\bullet}[_{{{\mathscr P}}_{\bullet}},  -)$ to \eqref{convseq}, 
we obtain an exact sequence of $K$-vector spaces 
$$
\begin{array}{ll}
\cdots \rightarrow H^m_{\rm log-conv}((X_s, M_s)/{\cal V})
&\rightarrow \Gamma(]s[_{\mathscr C},  \R^{m} f_{(X,M) / (\mathscr C, {\mathscr N}), {\rm an} \ast } ({\O}_{{\rm an},X,K})) \\
&\xrightarrow{\nabla} \Gamma(]s[_{\mathscr C},  
\R^{m} f_{(X,M) / (\mathscr C, {\mathscr N}), {\rm an} \ast } ({\O}_{{\rm an},X,K}))dt/t \rightarrow \cdots 
\end{array}
$$
where $\nabla$ is the log-connection on the free $\Gamma(] s[_{\mathscr C}, \O_{] s[_{\mathscr C}})$-module 
endowed with a Frobenius structure. 
Taking the residue of 
$\Gamma(] s[_{\mathscr C},  \R^{m}f_{(X, M)/(\mathscr C, {\mathscr N}), {\rm an} \ast} ({\O}_{X,K}))$ at  ${\hat s}^{\times}_K$ 
(see \eqref{residue}), we have the following long exact sequence of finite dimensional $K$-vector spaces
\medskip
$$
\dots \rightarrow H^m_{\rm log-conv}((X_s, M_s)/{\cal V}) \rightarrow  H^m_{\rm log-crys}((X_s,M_s)/{\cal V}^\times) \otimes K \xrightarrow{N_m}  H^m_{\rm log-crys}((X_s,M_s)/{\cal V}^\times) \otimes K(-1) \rightarrow \cdots
\leqno \eqref{rel6}
$$

\noindent 
by Theorem \ref{monodromy}, where $(-1)$ means the $(-1)$-th Tate twist of Frobenius structure.

\vspace*{3mm}

\begin{rmk}
Our results in this section also hold if we begin with a locally free isocrystal $\mathcal E$ on 
the log-convergent site $((X, M)/\mathcal V)_{\mathrm{conv}}^{\mathrm{log}}$ 
instead of $\O_{\mathrm{conv}, X, K}$. Then $\O_{\mathrm{crys}, X, K}$ is replaced by the associated 
locally free isocrystal $\Phi(\cal E)$ on the 
log-crystalline site $((X, M)/\mathcal V)_{\mathrm{crys}}^{\mathrm{log}}$ 
in \cite{shiho:fund1} 5.3.1 and \cite{shiho:rel1} 2.35. 
\end{rmk}

\bigskip

\section{Building up the  sequence}

In order to calculate the rigid cohomology of $X$ (for example) 
one does not really  need to take a covering of the variety respecting the fact that  it is defined over $C$. 
But for future use we will need the existence of coverings which are compatible with the map to the curve $C$ 
and, possibly, with a smooth  compactification.   So, we are always in the diagram   \eqref{diag2}. 
Let ${\overline C}$ denote a compactification of $C$. 
By Nagata compactification of $X$  over ${\overline C}$ we may think of having 
a compactification ${\overline X}$ of $X$ over ${\overline C}$. 
We may then suppose that we have a simplicial Zariski hypercovering diagram of the type 

\begin{equation*} 
\xymatrix{
X_{s,\bullet}\ar[d] \ar[r]^{{\iota}_{\bullet}}&X_{\bullet}  \ar[d]  \ar[r] & {\overline X}_{\bullet}  \ar[d]  \ar[r]   & {\mathscr P}_{\bullet}   \\
X_s \ar[d] \ar[r]  & X \ar[d] \ar[r] &{\overline X} \ar[d]  &  \\
s \ar[r]& C \ar[r] & {\overline C}. &
}\end{equation*}

\noindent 
The simplicial map ${\overline X}_{\bullet} \rightarrow {\overline X}$ is a Zariski affine hypercovering, 
${\mathscr P}_{\bullet}$ is a simplicial formal scheme separated of finite type 
over $\cal V$ which is smooth around $X_{\bullet}$ (note that ${\mathscr P}_{\bullet}$ is not ${\mathscr P}_{\bullet}$ in sect. 3) 
and admits a lift $\sigma_{\bullet}$ of Frobenius compatible with the Frobenius $\sigma$ on $\cal V$ 
and all squares are cartesian.  
Such a setting  allows us to calculate $H^{m}_{X_s, {\rm rig}}(X)$.  In fact  we have:

\begin{equation} \label{supportrigid}
H^{m}_{X_s,{\rm  rig}}(X)= 
\R^{m}\Gamma(]{\overline X}_{\bullet}[_{{\mathscr P}_{\bullet}}, 
[j^{\dag}_{] { X}_{\bullet}[_{{\mathscr P}_{\bullet}}}\Omega^{\bullet}_{] {\overline X}_{\bullet}[_{{\mathscr P}_{\bullet}}} \rightarrow 
j^{\dag}_{] { X}_{\bullet}\setminus X_{s,\bullet}[_{{\mathscr P}_{\bullet}}}
\Omega^{\bullet}_{] {\overline X}_{\bullet}[_{{\mathscr P}_{\bullet}}} ]), 
\end {equation}

\noindent 
where $j^\dagger_{]U_\bullet[_{{\mathscr P}_{\bullet}}}{\cal F_\bullet}$ is a simplicial sheaves of overconvergent sections along 
$]{\overline X}_\bullet \setminus U_\bullet[_{\cal P_\bullet}$ 
associated to a sheaf ${\cal F}_\bullet$ of abelian groups on $]{\overline X}_\bullet[_{\cal P_\bullet}$ 
for a simplicial open subscheme $U_\bullet$ of ${\overline X}_\bullet$.  Here $j$ is seen as the map:
$j_{]U_\bullet[_{{\mathscr P}_{\bullet}}} : {]U_\bullet[_{{\mathscr P}_{\bullet}}} \rightarrow ] {\overline X}[_{{\mathscr P}_{\bullet}}$
In particular we will have  a long exact sequence 

\begin{equation*} 
\cdots \rightarrow H^{m}_{X_s, { \rm rig}}(X) \rightarrow H^{m}_{\rm rig}(X) \rightarrow 
H^{m}_{\rm  rig}(X \setminus X_s) \rightarrow \cdots
\end{equation*}

\noindent 
of finite dimensional $K$-vector spaces with Frobenius structures.

In order to construct the exact  sequence  we seek, we will need to deal with complexes which are defined 
over $]{X}_{\bullet}[_{{\mathscr P}_{\bullet}}$. We may then consider an admissible covering 
of $] {\overline  X}_{\bullet}[_{{\mathscr P}_{\bullet}}$  given by 

\begin{equation*} 
\{ ] {\overline X}_{\bullet}[_{{\mathscr P}_{\bullet}} \setminus ] {X}_{s,\bullet}[_{{\mathscr P}_{\bullet}}, ] {X}_{\bullet}[_{{\mathscr P}_{\bullet}} \}.
\end{equation*}

\noindent 
The two complexes which form the simple complex in \eqref{supportrigid} are equal on 
$ ] {\overline X}_{\bullet}[_{{\mathscr P}_{\bullet}} \setminus ] {X}_{s,\bullet}[_{{\mathscr P}_{\bullet}}$ and in the intersection  
$(]{\overline X}_{\bullet}[_{{\mathscr P}_{\bullet}} \setminus ]{X}_{s,\bullet}[_{{\mathscr P}_{\bullet}}) \cap ]{X}_{\bullet}[_{{\mathscr P}_{\bullet}}$ 
(note that $X_s \cap ({\overline X}\setminus X)= \vuoto$). We then conclude that

\begin{equation}  \label{supportrigid1} 
[ j^{\dag}_{] { X}_{\bullet}[_{{\mathscr P}_{\bullet}}}\Omega^{\bullet}_{] {\overline X}_{\bullet}[_{{\mathscr P}_{\bullet}}} 
\rightarrow j^{\dag}_{]{X}_{\bullet}\setminus X_{0,\bullet}[_{{\mathscr P}_{\bullet}}}
\Omega^{\bullet}_{] {\overline X}_{\bullet}[_{{\mathscr P}_{\bullet}}} ] \iso
[\alpha_{\bullet, \ast}( j^{\dag}_{] { X}_{\bullet}[_{{\mathscr P}_{\bullet}}}
\Omega^{\bullet}_{]{\overline X}_{\bullet}[_{{\mathscr P}_{\bullet}}})_{\mid {]{X}_{\bullet} [_{{\mathscr P}_{\bullet}}}} 
\rightarrow \alpha_{\bullet, \ast}(j^{\dag}_{]{X}_{\bullet}\setminus X_{s,\bullet}[_{{\mathscr P}_{\bullet}}}
\Omega^{\bullet}_{]{{\overline X}_{\bullet}}[_{{\mathscr P}_{\bullet}}})_{\mid  {]{X}_{\bullet} [_{{\mathscr P}_{\bullet}}}}]
\end{equation}

\noindent 
where $\alpha_{\bullet}: ]{X}_{\bullet} [_{{\mathscr P}_{\bullet}} \rightarrow ]{\overline X}_{\bullet}[_{{\mathscr P}_{\bullet}}$ and 
of course we may re-write the second part of \eqref{supportrigid1} as

\begin{equation} 
\label{supportrigid2} 
\alpha_{\bullet, \ast}[\Omega^{\bullet}_{] { X}_{\bullet}[_{{\mathscr P}_{\bullet}}}
\rightarrow
j^{\dag}_{] { X}_{\bullet}\setminus X_{s,\bullet}[_{{\mathscr P}_{\bullet}}}\Omega^{\bullet}_{] {{ X}_{\bullet}}[_{{\mathscr P}_{\bullet}}}]
\end{equation}
where now we consider $j_{]U_\bullet[_{{\mathscr P}_{\bullet}}} : 
{]U_\bullet[_{{\mathscr P}_{\bullet}}} \rightarrow ] { X}[_{{\mathscr P}_{\bullet}}$.

Hence we would like to compare $H^m_{X_s, {\rm rig}}(X)$ with  
\begin{equation}
\label{supportrigid11}
H^m_{X_s,{\rm rig}}((X, X)): =\R^m \Gamma({]{X}_{\bullet}[_{{\mathscr P}_{\bullet}}}, 
[\Omega^{\bullet}_{]{X}_{\bullet}[_{{\mathscr P}_{\bullet}}} \rightarrow
j^{\dag}_{]{X}_{\bullet}\setminus X_{s,\bullet}[_{{\mathscr P}_{\bullet}}}
\Omega^{\bullet}_{]{{X}_{\bullet}}[_{{\mathscr P}_{\bullet}}}])
\end{equation}
We will refer to  such a cohomology  $H^m_{X_s,{\rm rig}}((X, X))$  as 
 {\it rigid cohomology of the pair $(X, X)$} with support in $X_s$ along 
 the terminology of \cite{ct} and \cite{ts}. 
 Note that the usual rigid cohomology $H^m_{X_s,{\rm rig}}(X)$ of $X$ with support in $X_s$ 
 is the rigid cohomology $H^m_{X_s,{\rm rig}}((X, \overline{X}))$ of the pair $(X, \overline{X})$ with support in $X_s$. 
Then we claim:

\begin{prp} 
\label{ISO} There exists an isomorphism
\begin{equation*}
H^m_{X_s,{\rm rig}}(X) \iso H^m_{X_s,{\rm rig}}((X, X))
\end{equation*}
which is compatible with Frobenius structures.
\end{prp}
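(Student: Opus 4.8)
The plan is to deduce this from an excision (Mayer--Vietoris) argument for the admissible covering of $]{\overline X}_\bullet[_{{\mathscr P}_\bullet}$ already introduced, namely $\{\,]{\overline X}_\bullet[_{{\mathscr P}_\bullet}\setminus\,]X_{s,\bullet}[_{{\mathscr P}_\bullet},\ ]X_\bullet[_{{\mathscr P}_\bullet}\,\}$. Write $\mathcal K^\bullet$ for the simple complex $[\,j^{\dagger}_{]X_\bullet[_{{\mathscr P}_\bullet}}\Omega^\bullet_{]{\overline X}_\bullet[_{{\mathscr P}_\bullet}}\to j^{\dagger}_{]X_\bullet\setminus X_{s,\bullet}[_{{\mathscr P}_\bullet}}\Omega^\bullet_{]{\overline X}_\bullet[_{{\mathscr P}_\bullet}}\,]$ on $]{\overline X}_\bullet[_{{\mathscr P}_\bullet}$, so that by \eqref{supportrigid} one has $H^m_{X_s,{\rm rig}}(X)=\R^m\Gamma(]{\overline X}_\bullet[_{{\mathscr P}_\bullet},\mathcal K^\bullet)$ (hypercohomology in the simplicial sense), while by \eqref{supportrigid1} and \eqref{supportrigid2} the restriction $\mathcal K^\bullet|_{]X_\bullet[_{{\mathscr P}_\bullet}}$ is the complex $[\,\Omega^\bullet_{]X_\bullet[_{{\mathscr P}_\bullet}}\to j^{\dagger}_{]X_\bullet\setminus X_{s,\bullet}[_{{\mathscr P}_\bullet}}\Omega^\bullet_{]X_\bullet[_{{\mathscr P}_\bullet}}\,]$, whose hypercohomology over $]X_\bullet[_{{\mathscr P}_\bullet}$ is $H^m_{X_s,{\rm rig}}((X,X))$ by the definition \eqref{supportrigid11}.

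First I would record the key acyclicity: $\mathcal K^\bullet$ is acyclic on the admissible open $\mathcal U_0:=\,]{\overline X}_\bullet[_{{\mathscr P}_\bullet}\setminus\,]X_{s,\bullet}[_{{\mathscr P}_\bullet}$. Since $X_s\cap({\overline X}\setminus X)=\varnothing$, the subscheme $X_{s,\bullet}$ is closed in ${\overline X}_\bullet$ and its tube is contained in $]X_\bullet[_{{\mathscr P}_\bullet}$; hence over $\mathcal U_0$ the overconvergence conditions defining $j^{\dagger}_{]X_\bullet[}$ and $j^{\dagger}_{]X_\bullet\setminus X_{s,\bullet}[}$ coincide, the structural map of $\mathcal K^\bullet$ is an isomorphism there, and $\mathcal K^\bullet|_{\mathcal U_0}$ is acyclic. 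A fortiori $\mathcal K^\bullet$ is acyclic on $\mathcal U_0\cap\,]X_\bullet[_{{\mathscr P}_\bullet}$ as well.

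Next I would feed this into the Mayer--Vietoris long exact sequence attached to the covering $\{\mathcal U_0,\ ]X_\bullet[_{{\mathscr P}_\bullet}\}$ of $]{\overline X}_\bullet[_{{\mathscr P}_\bullet}$ and to $\mathcal K^\bullet$: since $\R^m\Gamma(\mathcal U_0,\mathcal K^\bullet)=0$ and $\R^m\Gamma(\mathcal U_0\cap\,]X_\bullet[_{{\mathscr P}_\bullet},\mathcal K^\bullet)=0$ for all $m$, the restriction map becomes an isomorphism
$$
H^m_{X_s,{\rm rig}}(X)=\R^m\Gamma(]{\overline X}_\bullet[_{{\mathscr P}_\bullet},\mathcal K^\bullet)\ \iso\ \R^m\Gamma(]X_\bullet[_{{\mathscr P}_\bullet},\mathcal K^\bullet|_{]X_\bullet[_{{\mathscr P}_\bullet}})=H^m_{X_s,{\rm rig}}((X,X)).
$$
All the tubes, the overconvergent-section functors, and the covering are stable under the Frobenius lift $\sigma_\bullet$ on ${\mathscr P}_\bullet$ compatible with $\sigma$, and the restriction map and the Mayer--Vietoris sequence are Frobenius-equivariant, so the isomorphism respects Frobenius structures.

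I expect the only delicate point to be the identifications behind \eqref{supportrigid1} and \eqref{supportrigid2} (already carried out above), namely the compatibility of the functors $j^{\dagger}_{]X_\bullet[}$ and $j^{\dagger}_{]X_\bullet\setminus X_{s,\bullet}[}$ with restriction to the admissible open $]X_\bullet[_{{\mathscr P}_\bullet}$, checked level by level on the simplicial scheme, together with the coincidence of the two overconvergence conditions on $\mathcal U_0$; granting those, the remaining argument is the purely formal excision just outlined.
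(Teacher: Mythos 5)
Your argument is correct, and it reaches the conclusion by a genuinely different (and more formal) route than the paper. The paper first upgrades the covering observation to the identification \eqref{supportrigid1}--\eqref{supportrigid2}, writing the support complex as the \emph{underived} pushforward $\alpha_{\bullet,\ast}$ of its restriction to $]X_\bullet[_{{\mathscr P}_\bullet}$, and then the actual content of its proof of Proposition \ref{ISO} is the acyclicity of $\alpha_{\bullet,\ast}$ on these sheaves, obtained by showing that the open immersions $]X_\bullet[_{{\mathscr P}_\bullet}\to\,]{\overline X}_\bullet[_{{\mathscr P}_\bullet}$ and $]X_\bullet\setminus X_{s,\bullet}[_{{\mathscr P}_\bullet}\to\,]{\overline X}_\bullet[_{{\mathscr P}_\bullet}$ are affinoid morphisms (this is where the specific geometry enters: an open immersion of curves is affine and $X_s$ is a divisor, then base change). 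Your Mayer--Vietoris argument sidesteps that step entirely: since you compare $\R\Gamma$ over $]{\overline X}_\bullet[_{{\mathscr P}_\bullet}$ directly with $\R\Gamma$ over the admissible open $]X_\bullet[_{{\mathscr P}_\bullet}$, no vanishing of higher direct images under $\alpha_\bullet$ is needed; all you use is the coincidence of the two $j^{\dagger}$ complexes off $]X_{s,\bullet}[_{{\mathscr P}_\bullet}$ (the same fact the paper invokes to get \eqref{supportrigid1}) plus the identification of the restricted cone with the complex in \eqref{supportrigid11}, which you correctly flag as the delicate compatibility of $j^{\dagger}$ with restriction to the tube. What your route buys is formality and generality (it would work without the curve/divisor input); what the paper's route buys is the stronger sheaf-level statement that the underived $\alpha_{\bullet,\ast}$ already computes the answer, an acyclicity technique (affinoid/quasi-Stein direct images) that the authors reuse immediately afterwards, e.g.\ for $\iota_{\bullet K}$ following \eqref{supportrigid6.tris}. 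Your Frobenius-equivariance remark is fine, since the covering and both members are stable under the chosen lift $\sigma_\bullet$ and the Mayer--Vietoris triangle is functorial for it.
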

\begin{proof} We need to prove the acyclicity of  $\alpha _{\bullet}$ on the sheaves which appear in \eqref{supportrigid2}. 
This follows from the  fact that the open immersions 
$]X_{\bullet} [ _{{\mathscr P}_{\bullet}} \rightarrow ]{\overline X}_{\bullet}[_{{\mathscr P}_{\bullet}}$ 
and $]X_{\bullet} \setminus X_{s, \bullet} [ _{{\mathscr P}_{\bullet}} \rightarrow ]{\overline X}_{\bullet}[_{{\mathscr P}_{\bullet}}$ 
are affinoid morphisms. This comes, by base change, 
from the fact that an open immersion of curves is affine and $X_s$ is a divisor.
 \end{proof}

\noindent 
We would like now to understand the two simplicial complexes which appear  in the definition of  the 
rigid cohomology of $(X, X)$ with support on $X_s$,  $H^m_{X_s,{\rm rig}}((X, X))\iso H^m_{X_s,{\rm rig}}((X, X))$ 
(as in \eqref{supportrigid11}), in order to involve  it  in another long exact sequence.  As in \cite{ct}, 
the first complex  gives the rigid cohomology $H^m_{\rm rig}((X, X))$ of the pair $(X, X)$, 
and the second complex  gives 
the rigid cohomology $H^m_{\rm rig}((X \setminus X_s, X))$ of the pair $(X \setminus X_s, X)$. 

\vspace*{3mm}

\begin{rmk}
These cohomologies (without support) have been named "naive"  by Berthelot, 
"convergent" by \cite{ls} 7.4.4.
\end{rmk}

 \medskip
 $$ \ast  \ast \ast$$

What we now discuss is an interpretation of such a complex in terms of log-convergent cohomology. 
In order to achieve this goal we have to choose a different good embedding than the original ${\mathscr P}_{\bullet}$ 
used thus far: it will enjoy some exactness properties.

\begin{prp} \label{emb} It is possible to construct a simplicial \'etale hypercovering $X_\bullet$ of $X$ 
which admits a closed immersion in a simplicial smooth formal scheme 
$\mathcal Q_\bullet^{\mathrm{ex}}$ separated of finite type over ${\cal V}$.  
Moreover if we endow $X_\bullet$ with the log-structure $M_\bullet$ induced from the log-structure $M$ on $X$, 
then we can give to 
$\mathcal Q_\bullet^{\mathrm{ex}}$ a fine and saturated log-structure $\mathscr M_\bullet$, 
which comes from a log-structure on the Zariski topology of $\mathcal Q_\bullet^{\mathrm{ex}}$, 
in such a way  that 
we have a commutative diagram

\begin{equation*} 
\xymatrix{
(X_\bullet, M_\bullet) \ar[d] \ar[r]^{i^{\mathrm{ex}}_\bullet}
&(\mathcal Q_\bullet^{\mathrm{ex}}, \mathscr M_\bullet)  \ar[d]   \\
 (X,M) \ar[r] & \mathcal V}
\end{equation*}

\noindent where $i^{\mathrm{ex}}_\bullet$  is an exact closed immersion of log-schemes and 
$(\mathcal Q_\bullet^{\mathrm{ex}}, \mathscr M_\bullet)$ is formally log-smooth over $\mathcal V$ 
and that it admits a lift $\sigma_\bullet$ of Frobenius which is compatible 
with the Frobenius $\sigma$ of $\mathcal V$.
\end{prp}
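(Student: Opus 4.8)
The plan is to build the desired exact embedding system in two stages, first locally (one level of the hypercovering at a time) and then assembling the levels into a simplicial formal scheme. First I would recall that since $X$ is a $k$-variety and $X_s \subset X$ is an NCD, one can choose, \'etale locally on $X$, charts in which $X$ is \'etale over a standard space $\mathrm{Spec}\, k[x_1, \dots, x_{n+1}]$ and $X_s$ is the locus $x_1 \cdots x_r = 0$. Using such local models I would construct, for each member of an affine \'etale covering $\{U_\lambda\}$ of $X$ chosen fine enough, a closed immersion of $U_\lambda$ (with its induced log-structure) into a formal $\mathcal V$-scheme with log-structure coming from a relative SNCD, which is log-smooth over $\mathcal V$ and carries a Frobenius lift; the point is that the standard logarithmic affine space $\mathrm{Spf}\,\widehat{\mathcal V[x_1, \dots, x_{n+1}]}$ with the log-structure associated to $x_1 \cdots x_r$ is formally log-smooth over $\mathcal V$, admits the obvious Frobenius lift $x_i \mapsto x_i^p$, and receives $U_\lambda$ as an \emph{exact} closed immersion when the covering is adapted to the components of $X_s$. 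One then takes fibre products of such local ambient spaces to obtain ambient spaces for the intersections $U_{\lambda_0} \cap \cdots \cap U_{\lambda_q}$, and the Frobenius lifts, log-smoothness and exactness are preserved under these fibre products over $\mathcal V$.

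Next I would pass to the Cech (or more precisely the coskeletal) \'etale hypercovering $X_\bullet$ associated to the covering $\{U_\lambda\}$: at simplicial level $q$ one has a disjoint union of the $(q+1)$-fold intersections, and the exact embeddings constructed above assemble into an exact closed immersion $i^{\mathrm{ex}}_\bullet : (X_\bullet, M_\bullet) \hookrightarrow (\mathcal Q_\bullet^{\mathrm{ex}}, \mathscr M_\bullet)$ of simplicial formal log-schemes, with the simplicial structure maps of $\mathcal Q_\bullet^{\mathrm{ex}}$ induced by the projections between fibre products; one checks these maps are compatible with the chosen Frobenius lifts. Formal log-smoothness over $\mathcal V$ and the requirement that $\mathscr M_\bullet$ be fine, saturated and Zariski-local hold at each level by the local construction and are inherited simplicially in the sense of the convention fixed in Section 2. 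The resulting commutative square with $(X, M) \to \mathcal V$ is then immediate.

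The main obstacle I expect is the simultaneous reconciliation of \emph{exactness} of the closed immersion with \emph{log-smoothness over $\mathcal V$} at the level of the hypercovering while keeping a compatible Frobenius lift: exactifying a closed immersion destroys, a priori, both the smoothness of the ambient formal scheme and the existence of a Frobenius lift, so one cannot simply exactify the ${\mathscr P}_\bullet$ of Section 3. The way around this is precisely to not exactify after the fact but to build the ambient spaces from the standard logarithmic affine spaces (adapted to the branches of $X_s$ meeting each chart), for which exactness of the immersion is automatic by construction, log-smoothness over $\mathcal V$ is the standard fact about logarithmic affine space, and the Frobenius lift $x_i \mapsto x_i^p$ (extended compatibly over the local coordinate $t$ on $\mathscr C$ using $\sigma_{\mathscr C}$) is manifest; the remaining work is the bookkeeping needed to glue these local choices over the intersections and to verify that the simplicial structure maps respect all of the data. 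A secondary point to be careful about is that the \'etale hypercovering must be chosen fine enough that on each member the relevant branches of $X_s$ are individually cut out, so that the induced log-structure $M_\bullet$ really is of the standard SNCD type level by level; this is possible after refining the covering, since $X_s$ is an NCD.
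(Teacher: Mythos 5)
Your local step is fine and matches the paper's starting point: on each member of a suitably adapted affine \'etale covering one can embed $(U_\lambda, M|_{U_\lambda})$ exactly into an affine smooth formal $\mathcal V$-scheme carrying a relative SNCD log-structure and a Frobenius lift $y_j \mapsto y_j^p$. The genuine gap is your claim that \emph{exactness} is preserved when you pass to fibre products of these ambient spaces at the higher simplicial levels. It is not: if a branch $D$ of $X_s$ is seen by two (or more) of the factors, then the product log-structure has one monoid generator per factor mapping to (a unit times) the same local equation of $D$, so its pullback to $X_m$ is strictly bigger than $M_m$ (morally $\mathbb N^2$ instead of $\mathbb N$ on that branch), and the closed immersion of $X_m$ into the product is a closed immersion of log-schemes but \emph{not} an exact one. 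This is precisely the difficulty the paper's proof is organized around: it builds the product ambient space $(\mathcal Q_\bullet, \mathscr L_\bullet)$ by the $\Gamma$-construction over the coskeleton of a $1$-truncated hypercovering, accepts that $i_\bullet$ is non-exact, and then performs an explicit, simplicially compatible exactification of monoids $L_\bullet \to L_\bullet^{\mathrm{ex}}$ (adjoining $\pm(1_{m,\alpha,j} - \sum_{j'} \gamma_{j,j'} 1_{m,\alpha,j'})$), checking in Lemmas \ref{ehc3} and \ref{ehc4} that the induced log-structure on $X_\bullet$ is exactly $M_\bullet$, that $\mathcal Q_\bullet^{\mathrm{ex}} \to \mathcal Q_\bullet$ is log-\'etale and only adjoins units (so the underlying formal scheme stays \emph{smooth} over $\mathcal V$ --- this uses condition (V), that the extra divisors pull back to sums with multiplicities of the basic $D_{m,\alpha,j}$), and that the Frobenius lift extends by acting as multiplication by $p$ on $L_\bullet^{\mathrm{ex}}$. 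So your stated ``way around'' --- avoiding exactification altogether because it would destroy smoothness and the Frobenius lift --- misidentifies the resolution: exactification is unavoidable, but the controlled, monoid-theoretic one above is harmless for both smoothness and Frobenius, unlike a blind exactification of the $\mathscr P_\bullet$ of Section 3.

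A secondary point: your \v{C}ech-style assembly with ``intersections'' $U_{\lambda_0} \cap \cdots \cap U_{\lambda_q}$ silently assumes that the fibre products $X_0 \times_X \cdots \times_X X_0$ of an \'etale covering again carry charts adapted level by level to the branches of $X_s$, with the branch-matching property needed to define the log-structures and the cosimplicial monoids coherently. Because $X_s$ is only an NCD (it may have self-intersections), this can fail, which is why the paper refines $X_0 \times_X X_0$ to an $X_1$ satisfying hypotheses (ii)--(iv) and takes $X_\bullet = \mathrm{cosk}_1^X(X_{\bullet \leq 1})$; the plain \v{C}ech hypercovering suffices only in the self-intersection-free case (Remark \ref{ehc0}). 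You gesture at refining the covering of $X$, but the refinement must also happen at level one, and the coskeleton bookkeeping (the decomposition of $X_m$ inside $T_m$ and Lemma \ref{ehc1}) is what makes the levelwise charts and the exactification compatible with all simplicial structure maps.
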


For our strategy of proof we will construct an \'etale hypercovering $X_\bullet$ such that, 
for each $m$, there is a disjoint decomposition 
$X_m = \coprod_{\alpha \in I_m}X_{m, \alpha}$ 
(each component is not necessarily connected) which is compatible with the simplicial structure 
and that the induced log-structure $M_{m, \alpha}$ on $X_{m, \alpha}$ from $M$ has a chart. 
Then one can explicitly write down 
the exactness procedure, which is explained in \cite{Ka:log} 4.10 
and \cite{shiho:fund2} 2.2.1. 
In order to construct an \'etale hypercovering of $X$, we use the coskeleton functor 
and, to construct an embedding system, we use a $\Gamma$-construction, 
which were studied in \cite{ct} 11.2 and \cite{ts} 7.2, 7.3. 
The proof will end at Lemma \ref{ehc4}. 

First we recall truncated simplicial (formal) schemes and the coskeleton functors. 
Let $\Delta$ be the standard simplicial category such that 
an object of this category will be indicated by 
$[l]= \{ 0,1, \dots l \}$ for any non negative  integer $l$. Let us also denote, 
for a nonnegative integer $q$, the full subcategory of $\Delta$ whose set of objects consists 
of all $[l]$ with $l \leq q$ by $\Delta[q]$. 
A $q$-truncated simplicial (formal) scheme 
over a (formal) scheme $S$ is a contravariant functor from $\Delta[q]$ 
to the category of (formal) schemes over $S$. 
For example, a $1$-truncated simplicial (formal) scheme $Y_{\bullet \leq 1}$ over $S$ is represented by 
\begin{equation}
\label{1-skeleton}
      Y_{\bullet \leq 1} = \left[Y_1\, \displaystyle{\mathop{\mathop{\longleftarrow}^\delta_{}}^{
      \displaystyle{\mathop{\longrightarrow}^{\pi_0}}}_{\displaystyle{\mathop{\longrightarrow}_{\pi_1}}}}\, 
      Y_0\right], 
\end{equation}
where $\pi_l : Y_1 \rightarrow Y_0$ (resp. $\delta : Y_0 \rightarrow Y_1$) 
is a morphism over $S$ corresponding to the map $[0] \rightarrow [1]\, (0 \mapsto l)$ 
(resp. the unique map $[1] \rightarrow [0]$). Then 
 $\pi_0 \circ \delta = \pi_1\circ \delta = \mathrm{id}_{X_0}$ hold. 

The $q$-skeleton functor 
$$
     \mathrm{sk}_q^S ; (\mbox{simplicial (formal) schemes over}\, S)\, \longrightarrow\, 
     (\mbox{$q$-truncated (formal) simplicial schemes over}\, S)
$$
has a right adjoint $\mathrm{cosk}_q^S$ which is called the $q$-coskeleton functor. 
For a nonnegative integer $m$, 
let $\Delta[q]/[m]$ be a category such that an object is a morphism $\xi : [l_\xi] \rightarrow [m]$ of 
$\Delta$ with $l_\xi \leq q$ and a morphism $\theta : \xi \rightarrow \eta$ is a morphism of $\Delta[q]$ 
with $\xi = \eta\circ\theta$. Then $q$-truncated simplicial (formal) scheme $Y_{\bullet \leq q}$ over $S$ 
induces a contravariant functor  from 
$\Delta[q]/[m]$ to the category of (formal) $S$-schemes, $\xi \mapsto  Y_{l_{\xi}}$. Then the $m$-th stage of 
$\mathrm{cosk}_q^S(Y_{\bullet \leq q})$ is given by 
\begin{equation}\label{prod1}
     \mathrm{cosk}_q^S(Y_{\bullet \leq q})_m 
     = \displaystyle{\mathop{\mathop{\mathrm{lim}}_{\longleftarrow}}_{\xi \in \mathrm{Ob}(\Delta[q]/[m])}}\, 
     Y_{l_\xi} 
\end{equation}
where the inverse limit is taken over the diagram of $S$-schemes over $\Delta[q]/[m]$, and 
the morphism $\pi_\zeta : \mathrm{cosk}_q^S(Y_{\bullet \leq q})_m \rightarrow 
\mathrm{cosk}_q^S(Y_{\bullet \leq q})_l$ corresponding to a morphism $\zeta : [l] \rightarrow [m]$ 
is given by the transformation 
$$
        \Delta[q]/[l] \rightarrow \Delta[q]/[m]\hspace*{5mm} \xi \mapsto \zeta \circ \xi
$$
of categories. More explicitly, let us put 
$$
     \begin{array}{l}
       T_m = \prod_{\xi \in \mathrm{Ob}(\Delta[q]/[m])} Y_{l_\xi}, \\
       S_m = \prod_{\theta : \xi \rightarrow \eta \in \mathrm{Mor}(\Delta[q]/[m])} Y_{l_\xi} 
      \end{array}
$$
where the products are taken over $S$, and define morphisms 
$h_i : T_m \rightarrow S_m$ over $S$ for $i = 1, 2$ by 
$$
       \begin{array}{ll}
       h_1((x_\xi)) = (y_{\theta : \xi \rightarrow \eta}) &y_{\theta : \xi \rightarrow \eta} = (x_\xi), \\
       h_2((x_\xi)) = (y_{\theta : \xi \rightarrow \eta}) &y_{\theta : \xi \rightarrow \eta} = \pi_\theta(x_\eta) 
       \end{array}
$$
where $\pi_\theta : Y_{l_\eta} \rightarrow Y_{l_\xi}$ is the corresponding morphism of (formal) schemes 
to $\theta : \xi \rightarrow \eta$. 
Then $\mathrm{cosk}_q^S(Y_{\bullet \leq q})_m$ is isomorphic to the fiber product of the diagram 
\begin{equation} 
\label{cart}
\xymatrix{
\mathrm{cosk}_q^S(Y_{\bullet \leq q})_m \ar[d]_{h_2^\prime} \ar[r]^{\hspace*{5mm} h_1^\prime}&T_m \ar[d]^{h_2} \\
T_m \ar[r]_{h_1} &S_m.
}
\end{equation}
When $m \leq q$, $\mathrm{cosk}_q^S(Y_{\bullet \leq q})_m$ is isomorphic to $Y_m$ by the composite of $h_1^\prime$ 
and the projection $T_m \rightarrow Y_{\mathrm{id}_{[m]}}$ since $\xi = \mathrm{id}_{[m]} \circ \xi$ 
for any object $\xi$ of $\Delta[q]/[m]$. 
If $Y_{\bullet \leq q}$ is separated over $S$, then $\mathrm{cosk}_q^S(Y_{\bullet \leq q})_m$ 
is a closed (formal) subscheme of $T_m$ by $h_1^\prime$ since $h_1$ is a closed immersion. 
Moreover, if $Y_{\bullet \leq q}$ is a $q$-truncated \'etale hypercovering of $S$, 
then $Y_{\bullet \leq q}$ is closed and open in $T_m$. 

Let us consider $\mathrm{cosk}_1^S(Y_{\bullet \leq 1})$ for a $1$-truncated simplicial 
(formal) scheme as in \eqref{1-skeleton}. The set of objects of $\Delta[1]/[m]$ consists of 
$$
     \begin{array}{l}
       \xi_i : [0] \rightarrow [m]\hspace*{5mm} \xi_i(0) = i\,\, \,  (0 \leq i \leq m), \\
       \eta_{i, j} : [1] \rightarrow [m]\hspace*{5mm}\eta_{i, j}(0) = i, \eta_{i, j}(1) = j\, \, \, (0 \leq i < j \leq m), \\
       \zeta_i : [1] \rightarrow [m]\hspace*{5mm}\zeta_i(0) = \zeta_i(1) = i\,\, \,  (0 \leq i \leq m). 
      \end{array}
$$
Then $\mathrm{cosk}_1^S(Y_{\bullet \leq 1})_m$ is characterized in $T_m$ via 
the closed immersion $h_1^{\prime}$ in \eqref{cart} by the following lemma:

\begin{lmm}\label{cosk1} $x \in T_m$ belongs to $\mathrm{cosk}_1^S(Y_{\bullet \leq 1})_m$ if and only if 
$x$ simultaneously satisfies the conditions:
$$
       \begin{array}{l}
               p_{\xi_i}(x) = \pi_0\circ p_{\eta_{i, j}}(x)\hspace*{5mm} \mathrm{for}\, \, 0 \leq i < j \leq m, \\
               p_{\xi_j}(x) = \pi_1\circ p_{\eta_{i, j}}(x)\hspace*{5mm} \mathrm{for}\, \, 0 \leq i < j \leq m, \\
               p_{\zeta_i}(x) = \delta\circ p_{\xi_i}(x)\hspace*{5mm} \mathrm{for}\, \, 0 \leq i \leq m, 
       \end{array}
$$
where $p_{\xi_i} : T_m \rightarrow Y_{l_{\xi_i}}$ (resp. $p_{\eta_{i, j}} : T_m \rightarrow Y_{l_{\eta_{i, j}}}$, 
resp. $p_{\zeta_i} : T_m \rightarrow Y_{l_{\zeta_i}}$) denotes the projection. 
\end{lmm}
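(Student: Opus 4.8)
\noindent
The plan is to make the fiber-product description of $\mathrm{cosk}_1^S(Y_{\bullet\leq 1})_m$ completely explicit. By \eqref{cart}, a point $x\in T_m$ lies in $\mathrm{cosk}_1^S(Y_{\bullet\leq 1})_m$ if and only if $h_1(x)=h_2(x)$ in $S_m$, and by the formulas defining $h_1$ and $h_2$ this means precisely that $\pi_\theta\bigl(p_\eta(x)\bigr)=p_\xi(x)$ for every morphism $\theta:\xi\to\eta$ of the category $\Delta[1]/[m]$ (in the lemma's notation, $p_\xi(x)=x_\xi$). So everything reduces to enumerating $\mathrm{Mor}(\Delta[1]/[m])$ and recording which structural map $\pi_\theta$ each morphism induces on the $Y_{l_\bullet}$'s.

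First I would list the non-identity morphisms $\theta:\xi\to\eta$. Since the sources of the objects are $[0]$ for the $\xi_i$ and $[1]$ for the $\eta_{i,j}$ and $\zeta_i$, the constraint $\xi=\eta\circ\theta$ leaves only the following cases: (a) $\theta:[0]\to[1]$ with $\theta(0)=0$ (resp. $\theta(0)=1$) into $\eta_{i,j}$, whose source is $\xi_i$ (resp. $\xi_j$) and whose structural map is $\pi_0$ (resp. $\pi_1$); (b) the two maps $\theta:[0]\to[1]$ into $\zeta_i$, both with source $\xi_i$ and with structural maps $\pi_0$ and $\pi_1$; (c) the two constant maps $\theta:[1]\to[1]$ into $\eta_{i,j}$, with source $\zeta_i$ resp. $\zeta_j$ and structural maps $\delta\circ\pi_0$ resp. $\delta\circ\pi_1$; and (d) the constant maps $[1]\to[1]$ into $\zeta_i$, with source $\zeta_i$ and structural maps $\delta\circ\pi_0,\ \delta\circ\pi_1$. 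Here one uses the contravariance built into \eqref{prod1} and the conventions for $\pi_0,\pi_1,\delta$ fixed in \eqref{1-skeleton}, together with the observation that there are no non-identity morphisms out of the $\eta_{i,j}$. The equations produced by the morphisms in (a) are exactly the first two conditions in the statement.

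Next I would show that, granted (a), the remaining equations are equivalent to the single condition $p_{\zeta_i}(x)=\delta\circ p_{\xi_i}(x)$. Indeed, substituting the (a)-identities into a (c)-equation turns $\delta\bigl(\pi_0(p_{\eta_{i,j}}(x))\bigr)=p_{\zeta_i}(x)$ into $\delta\bigl(p_{\xi_i}(x)\bigr)=p_{\zeta_i}(x)$, which is the third condition; conversely the third condition, together with the simplicial identities $\pi_0\circ\delta=\pi_1\circ\delta=\mathrm{id}$ recalled after \eqref{1-skeleton}, recovers all the (b)-, (c)- and (d)-equations. Collecting cases (a)--(d), we conclude that $h_1(x)=h_2(x)$ holds if and only if the three displayed conditions hold, which is the lemma.

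The only real work is the combinatorial bookkeeping of the morphism list: one must be careful to write each morphism of $\Delta[1]/[m]$ as a composite of cofaces and codegeneracies and to read off the induced scheme map correctly through the contravariance of $Y_{\bullet\leq 1}$. There is no geometric difficulty; once the morphism list is in hand, the collapse of the redundant equations to the three stated conditions is forced by the simplicial identities, and the argument works verbatim in the formal-scheme setting.
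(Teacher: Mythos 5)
Your proof is correct and follows essentially the same route as the paper: identify membership in $\mathrm{cosk}_1^S(Y_{\bullet\leq 1})_m$ with the equalizer condition $h_1(x)=h_2(x)$, i.e.\ with the family of relations $p_\xi(x)=\pi_\theta\circ p_\eta(x)$ indexed by $\mathrm{Mor}(\Delta[1]/[m])$, single out the three displayed relations, and check via the simplicial identities $\pi_0\circ\delta=\pi_1\circ\delta=\mathrm{id}$ that all remaining relations are consequences. This is exactly the paper's (much terser) argument.

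One inaccuracy in the bookkeeping, which you yourself flag as ``the only real work'': your list (a)--(d) is not the complete list of non-identity morphisms of $\Delta[1]/[m]$. You omit the morphisms with target $\xi_i$ and source $\zeta_i$, namely $\theta:\zeta_i\to\xi_i$ induced by the unique map $[1]\to[0]$ (recall $\xi_i\circ\theta=\zeta_i$); its structural map is $\delta:Y_0\to Y_1$, and the relation it imposes is $p_{\zeta_i}(x)=\delta\circ p_{\xi_i}(x)$ --- which is precisely the third condition of the lemma, and is in fact the morphism the paper has in mind when it says that condition ``comes from'' a morphism of $\Delta[1]/[m]$. The omission happens to be harmless: the missing relation coincides with the third condition (and, as you show, also follows from your (b) and (d)), so the equivalence you prove is the right one. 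But as written, the ``if'' direction of your conclusion rests on the claim that (a)--(d) exhaust the non-identity morphisms, which is false; you should either add this family to the enumeration (which then yields the third condition directly, shortening the reduction step) or explicitly note that its relations are among the three conditions being assumed.
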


\begin{proof} The conditions in the assertion are the relations $\pi_{\xi_i} = \pi_0 \circ\pi_{\eta_{i, j}}, 
\pi_{\xi_j} = \pi_1 \circ\pi_{\eta_{i, j}}, \pi_{\zeta_i} = \delta\circ\pi_{\xi_i}$ which comes from 
morphisms of $\Delta[1]/[m]$, respectively. 
All other relations deduce from these relations. 
\end{proof}

\medskip

At this point  we begin a proof of Proposition \ref{emb}. 
Let us take a $1$-truncated simplicial scheme $X_{\bullet \leq 1}$ over $X$, 
$$
      X_{\bullet \leq 1} = \left[X_1\, \displaystyle{\mathop{\mathop{\longleftarrow}^\delta_{}}^{
      \displaystyle{\mathop{\longrightarrow}^{\pi_0}}}_{\displaystyle{\mathop{\longrightarrow}_{\pi_1}}}}\, 
      X_0\right], 
$$
which satisfies the following hypotheses. 
\begin{list}{}{}
\item[(i)] $X_0 = \coprod_{\alpha \in I_0}X_{0, \alpha}$ 
with $|I_0| < \infty$ is an \'etale covering of $X$ by $\pi$ 
such that $X_{0, \alpha}$ is an affine integral scheme of finite type over $k$ for any $\alpha \in I_0$. 
\item[(ii)] $X_1 = \coprod_{\beta \in I_1}X_{1, \beta}$ with $|I_1| < \infty$ is an \'etale covering of 
$X_0 \times_X X_0$ by $\pi_0 \times \pi_1$ 
such that, for any $\beta \in I_1$, $X_{1, \beta}$ is an affine scheme of finite type over $k$ and 
$\pi_l(X_{1, \beta}) \subset X_{0, \alpha_l}\, (l = 0, 1)$ for some $\alpha_l \in I_0$, 
and $\delta^{-1}(X_{1, \beta})$ coincides with one of $X_{0, \alpha}$ 
for some $\alpha \in I_0$ or is empty. Note that $\alpha_l\, (l = 0, 1)$ and $\alpha$ 
are unique, and  we allow  the $X_{1,\beta}$'s to be not connected.
\item[(iii)] The inverse image $X_{s, 0, \alpha}$ of $X_s$ in $X_{0, \alpha}$ is 
an SNCD over $k$, which is defined by the inverse image 
of $x_{0, \alpha, 1}\cdots x_{0, \alpha, r_{l, \alpha}} = 0$ of 
an \'etale morphism $X_{0, \alpha} \rightarrow \mathbb A^{n+1}_k 
= \mathrm{Spec}\, k[x_{0, \alpha, 1}, \cdots, x_{0, \alpha, n+1}]$ for some $0 \leq r_{0, \alpha} \leq n+1$ 
such that the divisor $D_{0, \alpha, j}$ defined by the inverse image of  $x_{0, \alpha, j} = 0$ in $X_{0, \alpha}$ 
is irreducible for $1 \leq j \leq r_{0, \alpha}$ and $\cap_{j=1}^{r_{0, \alpha}}D_{0, \alpha, j} \ne \emptyset$. 
\item[(iv)] For any $\alpha, \alpha' \in I_0$, $1 \leq j \leq r_{0, \alpha}$, $1 \leq j' \leq r_{0, \alpha'}$ and $\beta \in I_1$, 
if $\pi_0^{-1}(D_{0, \alpha, j}) \cap X_{1, \beta}$ 
and $\pi_1^{-1}(D_{0, \alpha', j'}) \cap X_{1, \beta}$ have a common irreducible component, 
then $\pi_0^{-1}(D_{0, \alpha, j}) \cap X_{1, \beta} = \pi_1^{-1}(D_{0, \alpha', j'}) \cap X_{1, \beta}$ 
as schemes. For $\beta \in I _1$, the set $\{ D_{1, \beta, j}\, |\, 1 \leq j \leq r_{1, \beta}\}$ 
denotes all the collection of divisors of $X_{1, \beta}$ such that each of them is of the form 
$\pi_1^{-1}(D_{0, \alpha, j}) \cap X_{1, \beta}\, (\ne \emptyset)$ for some 
$\alpha \in I_0$ and $1 \leq j \leq r_{0, \alpha}$. 
Note that $D_{1, \beta, j}$ is reduced and the inverse image $X_{s, 1, \beta}$ of $X_s$ in $X_{1, \beta}$ 
is the  union of $\{ D_{1, \beta, j}\, |\, 1 \leq j \leq r_{1, \beta}\}$. 
\end{list}
Such a $1$-truncated simplicial scheme $X_{\bullet \leq 1}$ 
is a $1$-truncated \'etale hypercovering of $X$ 
and it always exists. Indeed, one can put $X_{\bullet \leq 1}$, by definition of NCD's, as follows:
\begin{list}{}{}
\item[(a)] $X_0$ is an \'etale covering of $X$ satisfying the hypotheses (i), (iii). 
\item[(b)] $X_1$ is a disjoint sum of $X_0$ and a Zariski open covering 
$\coprod_\beta\, U_\beta \rightarrow X_0 \times_X X_0$ 
of finite type such that $U_\beta$ is an affine scheme of finite type over $k$ which satisfies 
the inclusion hypothesis for $\pi_0$ and $\pi_1$ in (ii), {\it i.e.}, $\{ U_\beta \}$ is a 
refinement of $\{X_{0, \alpha} \times_X X_{0, \alpha'}\, |\, \alpha, \alpha' \in I_0\}$. 
\item[(c)] The inverse image $U_{s, \beta}$ of $X_s$ in $U_\beta$ is an SNCD 
such that the intersection of all irreducible components of $U_{s, \beta}$ is nonempty.  
\item[(d)]  For $l = 0, 1$, 
$\pi_l : X_1 \rightarrow X_0$ is given by the identity $\mathrm{id}_{X_0}$ on $X_0$ and 
the composition of $U_\beta \rightarrow X_0 \times_X X_0$ 
with  the natural $l$-th projection $X_0 \times_X X_0 \rightarrow X_0$. 
\item[(e)] $\delta : X_0\rightarrow X_1$ is the identity onto the component $X_0$ of $X_1$.
\end{list}

\noindent
Note that the condition (c) holds if one takes a sufficiently fine Zariski covering $\{ U_\beta \}$ of $X_0 \times_X X_0$. 
By the condition (c) the inverse image of an irreducible component of the SNCD $X_{s, 0, \alpha}$ of $X_{0, \alpha}$ by 
$\pi_i\, (i = 0, 1)$ with $\pi_i(U_\beta) \subset X_{0, \alpha}$ 
is either an irreducible component of the SNCD $U_{s, \beta}$ of $U_\beta$ or empty since $\pi_i$ is etale. 
Hence the hypothesis (iv) holds. 

Let us define a simplicial scheme $X_\bullet$ over $X$ by 
$$
        X_\bullet = \mathrm{cosk}_1^X(X_{\bullet \leq 1})
$$
and a log-structure $M_\bullet$ on $X_\bullet$ by the inverse image of the log-structure $M$ on $X$. 
Then $X_\bullet$ is an \'etale hypercovering of $X$. 

We will introduce a disjoint decomposition (it means a partition) on $X_m$ for each nonnegative integer $m$. 
The disjoint decomposition of $X_0 = \coprod_{\alpha \in I_0}X_{0, \alpha}$ 
and $X_1= \coprod_{\alpha \in I_1}X_{1, \alpha}$ induce a disjoint decomposition on the fiber product 
$T_m = \prod_{\xi \in \mathrm{Ob}(\Delta[1]/[m])}X_{l_\xi}$ as 
$$
        T_m = \coprod_{A = ((\alpha_{\xi_i}), (\beta_{\eta_{i, j}}), (\gamma_{\zeta_i}))} 
        T_{m, A}, \hspace*{5mm} T_{m, A} = \prod_{0 \leq i \leq m}X_{0, \alpha_{\xi_i}} \times 
        \prod_{0 \leq i<j \leq m}X_{1, \beta_{\eta_{i, j}}} \times\prod_{0 \leq i \leq m}X_{1, \gamma_{\zeta_i}}, 
$$
where $A = ((\alpha_{\xi_i}), (\beta_{\eta_{i, j}}), (\gamma_{\zeta_i}))$ runs over 
$I_0^{m+1} \times I_1^{m(m+1)/2} \times I_1^{m+1}$, 
and all fiber products are taken over $X$. Remember that $X_m$ is a closed subscheme of $T_m$ via the closed and open immersion  
$h_1^{\prime}$ of the diagram \eqref{cart}. By Lemma \ref{cosk1} we have:

\begin{lmm} Suppose furthermore that $X$ is connected. 
Then $X_m \cap T_{m, A}$ is nonempty if and only if 
the following conditions on $A = ((\alpha_{\xi_i}), (\beta_{\eta_{i, j}}), (\gamma_{\zeta_i}))$ hold simultaneously:
$$
      \left\{\begin{array}{l}
          \pi_0(X_{1, \beta_{\eta_{i, j}}}) \subset X_{0, \alpha_{\xi_i}}\, \, \mathrm{for}\, \, 0 \leq i < j \leq m \\
          \pi_1(X_{1, \beta_{\eta_{i, j}}}) \subset X_{0, \alpha_{\xi_j}}\, \, \mathrm{for}\, \, 0 \leq i < j \leq m \\
          \delta^{-1}(X_{1, \gamma_{\zeta_i}}) = X_{0, \alpha_{\xi_i}}\, \, \mathrm{for}\, \, 0 \leq i \leq m.
      \end{array} \right.
$$
In particular we have the disjoint union 
$$
X_m= \coprod_{A}^{.}  \,( X_m \cap T_{m, A}), 
$$
where $A = ((\alpha_{\xi_i}), (\beta_{\eta_{i, j}}), (\gamma_{\zeta_i}))$ 
runs over the set of indices satisfying the conditions above. 
\end{lmm}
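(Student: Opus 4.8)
Both implications follow from Lemma~\ref{cosk1}, which realizes $\mathrm{cosk}_1^X(X_{\bullet\le 1})_m$ as the locus in $T_m$ cut out by the relations $p_{\xi_i}=\pi_0\circ p_{\eta_{i,j}}$, $p_{\xi_j}=\pi_1\circ p_{\eta_{i,j}}$, $p_{\zeta_i}=\delta\circ p_{\xi_i}$, together with hypothesis (ii) on the $1$-truncated hypercovering and the fact that the decompositions $X_0=\coprod_\alpha X_{0,\alpha}$ and $X_1=\coprod_\beta X_{1,\beta}$ are disjoint. For the ``only if'' direction, take $x\in X_m\cap T_{m,A}$, so that $p_{\xi_i}(x)\in X_{0,\alpha_{\xi_i}}$, $p_{\eta_{i,j}}(x)\in X_{1,\beta_{\eta_{i,j}}}$, $p_{\zeta_i}(x)\in X_{1,\gamma_{\zeta_i}}$. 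Lemma~\ref{cosk1} gives $\pi_0(p_{\eta_{i,j}}(x))=p_{\xi_i}(x)\in X_{0,\alpha_{\xi_i}}$; on the other hand $\pi_0(p_{\eta_{i,j}}(x))\in\pi_0(X_{1,\beta_{\eta_{i,j}}})\subseteq X_{0,\alpha}$ for the unique $\alpha$ provided by (ii), so disjointness of the $X_{0,\bullet}$ forces $\alpha=\alpha_{\xi_i}$, which is the first condition. The second condition follows identically from $\pi_1(p_{\eta_{i,j}}(x))=p_{\xi_j}(x)$. For the third, $\delta(p_{\xi_i}(x))=p_{\zeta_i}(x)\in X_{1,\gamma_{\zeta_i}}$ shows $p_{\xi_i}(x)\in\delta^{-1}(X_{1,\gamma_{\zeta_i}})$, which by (ii) is empty or a single $X_{0,\alpha}$; it is nonempty (it contains $p_{\xi_i}(x)$), hence equals such an $X_{0,\alpha}$, and disjointness again forces $\alpha=\alpha_{\xi_i}$. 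This already delivers the ``in particular'' statement: since $X_m$ is a subscheme of $T_m=\coprod_A T_{m,A}$ we have $X_m=\coprod_A(X_m\cap T_{m,A})$, and what was just proved shows the summands indexed by $A$ violating the conditions vanish, so the union is effectively over the $A$ satisfying the conditions.

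For the ``if'' direction I would exhibit a point of $X_m\cap T_{m,A}$. Via Lemma~\ref{cosk1}, a point of $X_m$ over a point $\xi$ of $X$ is a family $(x_i)_i$ in $X_0$ and $(y_{ij})_{i<j}$, $(z_i)_i$ in $X_1$, all over $\xi$, with $\pi_0(y_{ij})=x_i$, $\pi_1(y_{ij})=x_j$, $z_i=\delta(x_i)$; it lies in $T_{m,A}$ precisely when $x_i\in X_{0,\alpha_{\xi_i}}$ and $y_{ij}\in X_{1,\beta_{\eta_{i,j}}}$ (then $z_i=\delta(x_i)\in X_{1,\gamma_{\zeta_i}}$ automatically, by the third condition). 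Since $X$ is connected and smooth, hence irreducible, I would work at a geometric point over the generic point $\eta$: each $X_{0,\alpha}\to X$ and each $X_{1,\beta}\to X_0\times_X X_0\to X$ is \'etale with dense open image, so all relevant fibres over $\eta$ are nonempty, and I would build the family vertex by vertex, using the surjectivity of $X_0\to X$ and $X_1\to X_0\times_X X_0$ from hypotheses (i), (ii).

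The delicate step, and the one I expect to be the main obstacle, is matching the lifts at the shared vertices: ensuring that the $y_{ij}$ can be chosen in the prescribed components with $\pi_1(y_{ik})=\pi_0(y_{ij})$ whenever $i<k<j$, and likewise for the degeneracies. The point is that fixing $x_i$ constrains the admissible lifts of the next vertex to a nonempty \emph{open} subset of the relevant $X_{0,\alpha}$, and one must choose everything generically enough over $\eta$ that these finitely many open conditions remain simultaneously satisfiable; this is exactly where the irreducibility of $X$ enters essentially, while the ``only if'' direction is purely formal once Lemma~\ref{cosk1} and the disjointness of the decompositions are in hand.
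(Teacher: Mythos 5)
Your treatment of the ``only if'' direction and of the resulting disjoint decomposition is correct and complete, and it is essentially the argument the paper leaves implicit: the lemma is stated immediately after Lemma \ref{cosk1} with no written proof, and the intended justification is exactly the one you give --- a point $x \in X_m\cap T_{m,A}$ has coordinates $p_{\xi_i}(x)$, $p_{\eta_{i,j}}(x)$, $p_{\zeta_i}(x)$ in the prescribed components, Lemma \ref{cosk1} forces the compatibilities $p_{\xi_i}(x)=\pi_0\circ p_{\eta_{i,j}}(x)$, $p_{\xi_j}(x)=\pi_1\circ p_{\eta_{i,j}}(x)$, $p_{\zeta_i}(x)=\delta\circ p_{\xi_i}(x)$, and the disjointness of the decompositions of $X_0$ and $X_1$ together with the uniqueness in hypothesis (ii) pins down the indices. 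You are also right that this half alone yields the displayed decomposition of $X_m$, which is the only consequence used afterwards (it is what defines $X_m=\coprod_{\alpha\in I_m}X_{m,\alpha}$ and feeds into the construction of the embedding system).

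The ``if'' direction is where your proposal has a genuine gap, and the difficulty you flag is not one that genericity can remove. Over a geometric generic point $\bar\eta$ of $X$ the problem is purely combinatorial: one must choose $x_i$ in the finite fibre of $X_{0,\alpha_{\xi_i}}$ and then lift every pair $(x_i,x_j)$ into the fibre of $X_{1,\beta_{\eta_{i,j}}}$, and the componentwise inclusions $\pi_0(X_{1,\beta_{\eta_{i,j}}})\subset X_{0,\alpha_{\xi_i}}$, $\pi_1(X_{1,\beta_{\eta_{i,j}}})\subset X_{0,\alpha_{\xi_j}}$ say nothing about which pairs of fibre points actually lift; these are not open conditions on an irreducible parameter space, so ``choosing generically enough'' has nothing to exploit. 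In fact the nonemptiness claim appears to fail in the stated generality: let $X_0=X_{0,\alpha}$ be a connected degree two \'etale covering of $X$, so that $X_0\times_X X_0$ is the disjoint union of the diagonal and its complement $\Delta'$, take $\Delta'$ (or an affine open piece of it) as one of the $X_{1,\beta}$'s in the construction (a)--(e), and for $m=2$ choose $A$ with $\beta_{\eta_{0,1}}=\beta_{\eta_{0,2}}=\beta_{\eta_{1,2}}$ equal to this component and $\gamma_{\zeta_i}$ the copy of $X_0$ inside $X_1$; the three conditions of the lemma hold, yet a point of $X_2\cap T_{2,A}$ would require three pairwise distinct elements in a two-element fibre, so the intersection is empty. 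Since the paper gives no argument for this direction either and never uses it, the sound course is the one your ``only if'' argument already supports: prove that half, state the decomposition over the indices satisfying the conditions (allowing some pieces to be empty), and do not attempt to complete the lifting argument you sketch.
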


For a general $X$, the lemma above holds on each connected component. 
We define a disjoint decomposition on $X_m$, 
$$
     X_m = \coprod_{\alpha \in I_m}\, X_{m, \alpha}, 
$$
by the induced disjoint decomposition from that of $T_m$. 
Note that $X_{m, \alpha}$ is not connected in general. 
The new disjoint decompositions 
of $X_0$ and $X_1$ are the same with the original ones via the natural projection 
$T_m \rightarrow X_{\mathrm{id}_{[m]}} = X_m\, \, (m = 0, 1)$.

\begin{lmm}\label{ehc1} 
\begin{list}{}{}
\item[{\rm (1)}] Let $\xi :[ l ]\rightarrow [m]$ be a morphism of $\Delta$. 
For $\beta \in I_m$, there is a unique element $\alpha(\xi, \beta) \in I_l$ 
such that $\pi_\xi(X_{m, \beta}) \subset X_{l, \alpha(\xi, \beta)}$. 
\item[{\rm (2)}] Let $\xi : [l_\xi] \rightarrow [m], \eta : [l_\eta] \rightarrow [m]$ be objects of $\Delta[1]/[m]$. 
For $1 \leq j \leq r_{l_\xi, \alpha(\xi, \beta)}$ and $1 \leq j' \leq r_{l_\eta, \alpha(\eta, \beta)}$, 
if $\pi_\xi^{-1}(D_{l_\xi, \alpha(\xi, \beta), j}) \cap X_{m, \beta}$ 
and $\pi_\eta^{-1}(D_{l_\eta, \alpha(\eta, \beta), j'}) \cap X_{m, \beta}$ have a common irreducible component, 
then $\pi_\xi^{-1}(D_{l_\xi, \alpha(\xi, \beta), j}) \cap X_{m, \beta} 
= \pi_\eta^{-1}(D_{l_\eta, \alpha(\eta, \beta), j'}) \cap X_{m, \beta}$ 
as schemes. 
\end{list}
\end{lmm}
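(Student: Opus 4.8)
The overall strategy is to exploit the fact that $X_\bullet=\mathrm{cosk}_1^X(X_{\bullet\leq 1})$ is described very explicitly: each $X_m$ sits as an open and closed subscheme of the fibre product $T_m=\prod_{\xi\in\mathrm{Ob}(\Delta[1]/[m])}X_{l_\xi}$ over $X$, compatibly with the disjoint decompositions, and is carved out inside $T_m$ by relations involving only the structure maps $\pi_0,\pi_1:X_1\to X_0$ and $\delta:X_0\to X_1$ (Lemma~\ref{cosk1} and the fibre-product description~\eqref{cart}). The plan is to reduce both statements to the bottom two levels, where (1) is immediate from the product structure and (2) is precisely condition~(iv). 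Along the way I will use that all $X_m$ are \'etale over $X$ (being open and closed in the fibre product $T_m$ of \'etale $X$-schemes), hence every structure map $\pi_\xi$ of $X_\bullet$ is \'etale; consequently the pullback of a reduced divisor along any $\pi_\xi$ is reduced, and for an \'etale $f:Y\to Z$ the closure of the image of an irreducible component of $Y$ is an irreducible component of $Z$. In particular, if $f^{-1}(D_1)$ and $f^{-1}(D_2)$ share an irreducible component for irreducible divisors $D_1,D_2$, then $D_1=D_2$.

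For part~(1), when $l\leq 1$ the morphism $\xi$ is itself an object of $\Delta[1]/[m]$ and $\pi_\xi$ is the restriction to $X_m$ of the projection $T_m\to X_{l_\xi}$, which by construction maps each block $T_{m,A}$ of the decomposition of $T_m$ into a single component of $X_{l_\xi}$; since $X_{m,\beta}$ lies in one such block, this produces $\alpha(\xi,\beta)$, and uniqueness is immediate because the $X_{l,\alpha}$ are pairwise disjoint and $X_{m,\beta}\neq\emptyset$. For $l\geq 2$ I would bootstrap: for each object $\zeta$ of $\Delta[1]/[l]$ one has $\pi_\zeta\circ\pi_\xi=\pi_{\xi\circ\zeta}$ with $\xi\circ\zeta\in\mathrm{Ob}(\Delta[1]/[m])$, so the previous case places $\pi_\zeta(\pi_\xi(X_{m,\beta}))$ in a definite component of $X_{l_\zeta}$; as $X_l\hookrightarrow T_l=\prod_\zeta X_{l_\zeta}$ is cut out by relations involving only these finitely many projections, $\pi_\xi(X_{m,\beta})$ must then lie in a single block $X_l\cap T_{l,A'}$, i.e.\ in a single $X_{l,\alpha}$, which is $\alpha(\xi,\beta)$.

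For part~(2) the main step is a double reduction. Using that every $D_{1,\gamma,k}$ has the form $\pi_1^{-1}(D_{0,\gamma',k'})\cap X_{1,\gamma}$ (condition~(iv)) together with the simplicial identities $\pi_1\circ\delta=\pi_0\circ\delta=\mathrm{id}$, $\pi_{\xi_i}=\pi_0\circ\pi_{\eta_{i,j}}$, $\pi_{\xi_j}=\pi_1\circ\pi_{\eta_{i,j}}$ and $\pi_{\zeta_i}=\delta\circ\pi_{\xi_i}$, one rewrites, for any object $\xi$ of $\Delta[1]/[m]$ with nonempty left-hand side,
$$
\pi_\xi^{-1}(D_{l_\xi,\alpha(\xi,\beta),j})\cap X_{m,\beta}\;=\;\pi_{\xi'}^{-1}(D_{0,\alpha(\xi',\beta),j_0})\cap X_{m,\beta}
$$
for a suitable object $\xi'$ of $\Delta[1]/[m]$ with $l_{\xi'}=0$ (the index $\alpha(\xi',\beta)$ being forced by part~(1); if the left-hand side is empty there is nothing to prove). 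So one may assume $\xi=\xi_a$ and $\eta=\xi_b$. If $a=b$, the two loci are pullbacks of irreducible divisors along the same \'etale map $\pi_{\xi_a}$, so a common component forces $j=j'$ and equality is trivial. If $a\neq b$, say $a<b$, introduce $\eta_0=\eta_{a,b}$ and $\beta_0=\alpha(\eta_0,\beta)$, so that $\pi_{\eta_0}(X_{m,\beta})\subset X_{1,\beta_0}$, $\pi_{\xi_a}=\pi_0\circ\pi_{\eta_0}$ and $\pi_{\xi_b}=\pi_1\circ\pi_{\eta_0}$; setting $E=\pi_0^{-1}(D_{0,\alpha(\xi_a,\beta),j})\cap X_{1,\beta_0}$ and $E'=\pi_1^{-1}(D_{0,\alpha(\xi_b,\beta),j'})\cap X_{1,\beta_0}$, the two loci become $\pi_{\eta_0}^{-1}(E)\cap X_{m,\beta}$ and $\pi_{\eta_0}^{-1}(E')\cap X_{m,\beta}$. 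A common irreducible component of these, pushed forward along the \'etale $\pi_{\eta_0}$, produces a common irreducible component of $E$ and $E'$; condition~(iv) then yields $E=E'$ as schemes, hence the claimed equality after pulling back along $\pi_{\eta_0}$.

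The main obstacle will be organizational rather than geometric: keeping the simplicial bookkeeping under control in the reduction of part~(2), in particular verifying that the rewriting $\xi\mapsto\xi'$ is consistent with the assignment $\alpha(\cdot,\cdot)$ of part~(1), and that $X_{1,\beta_0}$ together with the divisors $D_{0,\alpha(\xi_a,\beta),j}$ and $D_{0,\alpha(\xi_b,\beta),j'}$ genuinely fall under the hypotheses of condition~(iv) — which uses the remark that the disjoint decompositions of $X_0$ and $X_1$ produced by the construction agree with the original ones. The \'etale-descent input on images of irreducible components is elementary, so all the content lies in the passage back to condition~(iv).
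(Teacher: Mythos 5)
Your proof is correct and follows essentially the same route as the paper's (very terse) argument: part (1) is the compatibility of the product decomposition of $T_m$ with the coskeleton structure maps guaranteed by hypothesis (ii), and part (2) is the reduction via the definitional part of (iv) to level-zero divisors, followed by factoring $\pi_{\xi_a}$ and $\pi_{\xi_b}$ through $\pi_{\eta_{a,b}}$ --- your $\eta_0$ is exactly the paper's $\rho$ --- and then applying (iv). The details you add (the case $a=b$, the consistency of the indices $\alpha(\cdot,\cdot)$, and pushing a common irreducible component forward along the \'etale map $\pi_{\eta_0}$) are precisely the steps the paper leaves implicit in ``the assertion follows from hypothesis (iv).''
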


\noindent
{\it Proof.}  (1) The assertion follows from our hypotheses: in particular  (ii). 

(2) It is sufficient to prove the assertion in the case where $l_\xi = l_\eta = 0$ since 
all divisors in the level of $X_1$ comes from those of $X_0$ by  hypothesis  (iv). Suppose that 
$l_\xi = l_\eta = 0$ and $\xi(0) < \eta(0)$. 
There exists a morphism $\rho : [1] \rightarrow [m]$ such that $\pi_0\circ \pi_\rho = \pi_\xi$ 
and $\pi_1\circ \pi_\rho = \pi_\eta$. Then the assertion follows from  hypothesis (iv). 
\hspace*{\fill} $\Box$

We denote the collection $\{ D_{m, \beta, j}\, |\, 1 \leq j \leq r_{m, \beta}\}$ of reduced divisors 
of $X_{m, \beta}$ such that each of them is a form $\pi_\xi^{-1}(D_{l_\xi, \alpha(\xi, \beta), j'}) \cap X_{m, \beta}\, (\ne \emptyset)$ 
for some $\xi \in \Delta[1]/[m]$ and $1 \leq j' \leq r_{l_\xi, \alpha}$. 
Then the inverse image $X_{s, m, \beta}$ 
of $X_s$ in $X_{m, \beta}$ is a union of  divisors $\{ D_{m, \beta, j}\, |\, 1 \leq j \leq r_{m, \beta}\}$.

Let us now construct an embedding system $i_\bullet : X_\bullet \rightarrow {\cal Q}_\bullet$. 
For $l = 0, 1$, we fix an affine smooth formal scheme $\mathcal R_l = \coprod_{\alpha \in I_l}\mathcal R_{l, \alpha}$ 
separated of finite type over ${\cal V}$ with an SNCD
$\mathcal E_l =\coprod_{\alpha \in I_l}\mathcal E_{l, \alpha}$ relatively to ${\cal V}$, 
which fits into the commutative diagram over ${\cal V}$ for each $\alpha \in I_l$:
\begin{equation*} 
\xymatrix{
X_{l, \alpha} \ar[d] \ar[r]^{i_{l, \alpha}}&\mathcal R_{l, \alpha} \ar[d] \\
\vspace*{10mm} \ar[d]
&\widehat{\mathbb A}_{{\cal V}}^{s_{l, \alpha}} \ar[d] 
&\hspace*{-10mm} = \mathrm{Spf}\, {\cal V}[y_{l, \alpha, 1}, \cdots, y_{l, \alpha, s_{l, \alpha}}]\widehat{\, } \\
\mathrm{Spec}\, k[x_{l, \alpha, 1}, \cdots, x_{l, \alpha, n+1}] \ar[r] 
&\widehat{\mathbb A}_{{\cal V}}^{r_{l, \alpha}} 
&\hspace*{-10mm} = \mathrm{Spf}\, {\cal V}[y_{l, \alpha, 1}, \cdots, y_{l, \alpha, r_{l, \alpha}}]\widehat{\, }} 
\end{equation*}
($\widehat{R}$ denotes a $p$-adic completion of $R$) satisfying the following hypotheses. 
\begin{list}{}{}
\item[(I)] $i_{l, \alpha} : X_{l, \alpha} \rightarrow \mathcal R_{l, \alpha}$ is a closed immersion. 
\item[(II)] The left vertical arrow is the map coming from the hypothesis (iii) for $l=0$. In case $l=1$ 
the set $\{ x_{1, \beta, 1}, \cdots, x_{1, \beta, n+1}\}$ is a system of coordinates of $X_{1, \beta}$ 
such that $D_{1, \beta, j}$ is defined by $x_{1, \beta, j} = 0$ for $1 \leq j \leq r_{1, \beta}$.  
One can take such a system by the hypotheses (ii), (iii) and (iv). 
\item[(III)] The first right vertical arrow is smooth and the inverse 
image of $y_{l, \alpha, 1}\cdots y_{l, \alpha, s_{l, \alpha}} = 0$ 
is the SNCD $\mathcal E_{l, \alpha} = \cup_{j=1}^{s_{l, \alpha}} \mathcal E_{l, \alpha, j}$, 
and the second right vertical arrow is the natural projection 
for  $r_{l, \alpha} \leq s_{l, \alpha}$. 
\item[(IV)] The bottom arrow is defined by $y_{l, \alpha, j} \mapsto x_{l, \alpha, j}$ for $1 \leq j \leq r_{l, \alpha}$.
\item[(V)] The  inverse image $i_{l, \alpha}^{-1}\mathcal E_{l, \alpha, j}$ for $1\leq j \leq s_{l,\alpha}$ is a sum of 
$D_{l, \alpha, 1}, \cdots, D_{l, \alpha, r_{l, \alpha}}$ with multiplicities.

\item[(VI)] There exists a Frobenius $\sigma_{\mathcal R_{l, \alpha}}$ on $\mathcal R_{l, \alpha}$ 
which is an extension of the Frobenius $\sigma$ on ${\cal V}$ such that 
$\sigma_{\mathcal R_{l, \alpha}}(y_{l, \alpha, j}) = y_{l, \alpha, j}^p$ for any 
$1 \leq j \leq s_{l, \alpha}$. 
\end{list}
Indeed, such formal schemes $\mathcal R_l\, (l = 0, 1)$ exist by our  hypotheses  on $X_0$ and $X_1$.  

\vspace*{3mm}

\begin{rmk}    Strictly speaking the hypotheses above are more general than those needed for the constructions 
of the present article: in fact we will be able to have $s_{l, \alpha}=r_{l,\alpha}$ and hypothesis (V) will be automatically satisfied.  
But we think that such hypotheses  will be the correct ones 
if one seeks a functorial behavior for the results connected with our constructions.
\end{rmk}

We define a log-structure of $\mathcal R_l$ which is induced by the SNCD $\mathcal E_l$. 
This log-structure has a local chart $L_{l, \alpha} = \mathbb N^{s_{l, \alpha}}$ defined by 
$1_{l, \alpha, j} \mapsto y_{l, \alpha, j}$, where $1_{l, \alpha, j}$ is the $1$ of the $j$-th component of $L_{l, \alpha}$. 
Then $(\mathcal R_l, L_l^a) = \coprod_{\alpha \in I_l}(\mathcal R_{l, \alpha}, L_{l, \alpha}^a)$ 
is log-smooth over $\mathcal V$ and the underlying morphism of  formal schemes induces 
a closed immersion $(X_l, M_l) \rightarrow (R_l,  L_l^a)$ of log-schemes. 
Here $L_{l, \alpha}^a$ means the log-structure associated to 
the pre-log-structure $L_{l, \alpha}$. 
Moreover, $(\mathcal R_l, L_l^a)$ admits a lift of Frobenius 
$\sigma_{\mathcal R_l} = \coprod_{\alpha \in I_l}\sigma_{\mathcal R_{l, \alpha}}$. 

We define a simplicial formal log-scheme ${\cal Q}_\bullet$ by
\begin{equation}\label{prod2}
     ({\cal Q}_\bullet, \mathscr L_\bullet) = \Gamma_0^{{\cal V}}((\mathcal R_0, L_0^a))^{\leq 0} \times_{\mathrm{Spf}\, {\cal V}} 
     \Gamma_1^{{\cal V}}((\mathcal R_1, L_1^a))^{\leq 1}
     = \left(\prod_{\xi \in \mathrm{Ob}(\Delta[1]/[\bullet])} (\mathcal R_{l_\xi}, L_{l_\xi}^a)\right)_\bullet
\end{equation}
as formal log-schemes over ${\cal V}$ (see the definition and properties of 
the $\Gamma$-construction in \cite{ct} 11.2 and \cite{ts} 7.3.). Then we have a closed immersion
\begin{equation*}
       i_\bullet : (X_\bullet, M_\bullet) \rightarrow ({\cal Q}_\bullet, \mathscr L_\bullet)
\end{equation*}
of formal log-schemes over ${\cal V}$ by \cite{ct}, 11.2.4 and  11.2.7. 
Since the index sets of the products are same in \eqref{prod1} and \eqref{prod2}, 
the fiber product decomposition will induce a decomposition 
$$
    i_m = \coprod_{\alpha \in I_m}i_{m, \alpha} : \coprod_{\alpha \in I_m}(X_{m, \alpha}, M_{m, \alpha}) \rightarrow 
     \coprod_{\alpha \in I_m}({\cal Q}_{m, \alpha}, \mathscr L_{m, \alpha})
$$
of closed immersions for each $m$ and they form a map of  simplicial formal log-schemes. 
(Note that the components of $({\cal Q}_m, \mathscr L_m)$ 
which have no images from $(X_m, M_m)$ can be omitted.) 
By the product construction in \eqref{prod2}, 
$({\cal Q}_\bullet, \mathscr L_\bullet)$ 
is log-smooth over ${\cal V}$ and the underlying simplicial formal scheme ${\cal Q}_\bullet$ is smooth. 
More precisely, each log-structure $\mathscr L_{m, \alpha}$ of ${\cal Q}_{m, \alpha}$ has a chart $L_{m, \alpha}$ 
which is a product of some of $L_{l, \beta}\, (l=0, 1\, \mbox{and}\,  \, \beta \in I_l)$ and is 
isomorphic to $\mathbb N^{s_{m, \alpha}}$ with some $s_{m, \alpha} \geq r_{m, \alpha}$
such that, by reordering the  generators of $L_{m, \alpha}$, 
$1_{m, \alpha, j}$ goes to a generator of $D_{m, \alpha, j}$ in $X_{m, \alpha}$ for $1 \leq j \leq r_{m, \alpha}$ 
and $1_{m, \alpha, j}$ defines a sum of $D_{m, \alpha, 1}, \cdots, D_{m, \alpha, r_{m, \alpha}}$ 
with multiplicities for $r_{m, \alpha} < j \leq s_{m, \alpha}$ in $X_{m, \alpha}$ 
by Lemma \ref{ehc1}. 
In addition $\{ L_m = \prod_{\alpha \in I_m}L_{m, \alpha}\, |\, m \geq 0 \}$ forms a co-simplicial monoid $L_\bullet$ 
by our construction of $({\cal Q}_\bullet, \mathscr L_\bullet)$. 
The Frobenius endomorphisms $\sigma_{\mathcal R_0}$ and $\sigma_{\mathcal R_1}$ induce 
a Frobenius $\sigma_\bullet$ on 
$({\cal Q}_\bullet, \mathscr L_\bullet)$ such that it acts by the multiplication by $p$ on $L_\bullet$. 

Let us define a monoid $L_{m, \alpha}^{\mathrm{ex}}$ in the associated group $L_{m, \alpha}^{\mathrm{gr}}$ by 
$$
    L_{m, \alpha}^{\mathrm{ex}} = L_{m, \alpha}\left[ \left. 
    \pm(1_{m, \alpha, j} - \sum_{j'=1}^{r_{m, \alpha}}\gamma_{j, j'}1_{m, \alpha, j'}) \right|\, 
    \begin{array}{l}
    \mbox{ $r_{m,\alpha} < j \leq s_{m,\alpha}$, the divisor defined by the image  } \\
    \mbox{ of $y_j$  in $X_{m, \alpha}$ is $\sum_{j'=1}^{r_{m, \alpha}}\gamma_{j, j'}D_{m, \alpha, j^{\prime}}$.} 
    \end{array}
    \right]. 
$$

\begin{lmm}\label{ehc3} 
\begin{list}{}{}
\item[\mbox{\rm (1)}] The composite map 
$L_{m, \alpha} \rightarrow \Gamma({\cal Q}_{m, \alpha}, \O_{{\cal Q}_m, \alpha}) 
\rightarrow \Gamma(X_{m, \alpha}, \O_{X_m, \alpha})$ 
of monoids uniquely factors as 
$$
     L_{m, \alpha} \rightarrow L_{m, \alpha}^{\mathrm{ex}} \rightarrow \Gamma(X_{m, \alpha}, \O_{X_m, \alpha}). 
$$
\item[\mbox{\rm (2)}] The log-structure on $X_{m,\alpha}$ associated to 
the pre-log-structure  $L_{m, \alpha}^{\mathrm{ex}} 
\rightarrow \Gamma(X_{m, \alpha}, \O_{X_m, \alpha})$ is isomorphic to $M_{m, \alpha}$. 
\item[\mbox{\rm (3)}] The collection 
$\{ L_m^{\mathrm{ex}} = \prod_{\alpha \in I_m}L_{m, \alpha}^{\mathrm{ex}}\, |\, m \geq 0 \}$ 
forms a co-simplicial monoid $L_\bullet^{\mathrm{ex}}$ and 
the collection $\{ L_{m, \alpha}^{\mathrm{ex}} \rightarrow M_{m, \alpha} \}$ of homomorphisms 
of monoids induces a morphism $L_\bullet^{\mathrm{ex}} \rightarrow M_\bullet$ of co-simplicial monoids. 
\end{list}
\end{lmm}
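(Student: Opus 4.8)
The three parts together say that the monoids $L_{m,\alpha}^{\mathrm{ex}}$ realise, at the level of charts, the \emph{exactification} of the closed immersions $i_{m,\alpha}\colon (X_{m,\alpha},M_{m,\alpha})\hookrightarrow (\mathcal Q_{m,\alpha},\mathscr L_{m,\alpha})$, compatibly with the simplicial structure. My plan is to reduce all three assertions to explicit manipulations inside the free monoid $L_{m,\alpha}\cong\N^{s_{m,\alpha}}$ and its partial group completion, and to invoke the general chart-theoretic exactification formalism of \cite{Ka:log} 4.10 and \cite{shiho:fund2} 2.2.1 only for the passage from monoids to log-structures.

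For (1), recall that $L_{m,\alpha}^{\mathrm{ex}}$ is the submonoid of the associated group $\Z^{s_{m,\alpha}}$ generated by $L_{m,\alpha}$ together with the elements $\pm v_j$, where $v_j=1_{m,\alpha,j}-\sum_{j'=1}^{r_{m,\alpha}}\gamma_{j,j'}1_{m,\alpha,j'}$ for $r_{m,\alpha}<j\le s_{m,\alpha}$. Thus it suffices to show that the composite $L_{m,\alpha}\to\Gamma(\mathcal Q_{m,\alpha},\O)\to\Gamma(X_{m,\alpha},\O)$ sends each $v_j$ to a unit, i.e.\ (multiplicatively) that the image of $y_{m,\alpha,j}$ in $X_{m,\alpha}$ equals a unit times $\prod_{j'}x_{m,\alpha,j'}^{\gamma_{j,j'}}$; but this is exactly the statement, recorded just before the lemma, that the image of $y_{m,\alpha,j}$ defines the divisor $\sum_{j'}\gamma_{j,j'}D_{m,\alpha,j'}$, which comes from hypothesis (V) at levels $0,1$ and Lemma \ref{ehc1}. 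The factorization is then forced by the universal property of inverting these particular elements, and it is unique because $L_{m,\alpha}\to L_{m,\alpha}^{\mathrm{ex}}$ induces an isomorphism on associated groups, so any monoid homomorphism out of $L_{m,\alpha}^{\mathrm{ex}}$ is determined by its restriction to $L_{m,\alpha}$.

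For (2), I would check by a direct computation in $\Z^{s_{m,\alpha}}$ that $(L_{m,\alpha}^{\mathrm{ex}})^{\ast}$ is precisely the subgroup generated by the $v_j$ and that $L_{m,\alpha}^{\mathrm{ex}}=\N^{r_{m,\alpha}}\cdot(L_{m,\alpha}^{\mathrm{ex}})^{\ast}$, where $\N^{r_{m,\alpha}}\hookrightarrow L_{m,\alpha}^{\mathrm{ex}}$ is the sub-chart $\langle 1_{m,\alpha,1},\dots,1_{m,\alpha,r_{m,\alpha}}\rangle$; both are immediate from the fact that $v_j$ involves the additional generator $1_{m,\alpha,j}$ (with $j>r_{m,\alpha}$) with coefficient one and these additional generators are pairwise distinct, so in particular $\overline{L_{m,\alpha}^{\mathrm{ex}}}\cong\N^{r_{m,\alpha}}$. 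Since by (1) the units $v_j$ map into $\Gamma(X_{m,\alpha},\O)^{\ast}$, the exactification mechanism of \cite{Ka:log} 4.10 (see also \cite{shiho:fund2} 2.2.1) shows that $L_{m,\alpha}^{\mathrm{ex}}\to\Gamma(X_{m,\alpha},\O)$ and its sub-chart $\N^{r_{m,\alpha}}\to\Gamma(X_{m,\alpha},\O)$, $e_{j'}\mapsto x_{m,\alpha,j'}$, have the same associated log-structure. The latter is a chart of the divisorial log-structure of the SNCD $X_{s,m,\alpha}=\bigcup_{j'=1}^{r_{m,\alpha}}D_{m,\alpha,j'}$, namely of $M_{m,\alpha}$, by hypotheses (iii), (II) and Lemma \ref{ehc1}; this gives the asserted isomorphism $M_{m,\alpha}\iso(\text{log-structure of } L_{m,\alpha}^{\mathrm{ex}}\to\O)$ and, in particular, a canonical chart map $L_{m,\alpha}^{\mathrm{ex}}\to M_{m,\alpha}$.

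For (3), the co-simplicial structure maps of $L_\bullet$ were produced from those of $\mathcal R_0,\mathcal R_1$ via the $\Gamma$-construction of \cite{ct} 11.2 and \cite{ts} 7.3; one checks that under any such map $L_m\to L_{m'}$ a distinguished generator $v_j$ at level $m$ maps to a $\Z$-linear combination of units and distinguished generators at level $m'$ — again by Lemma \ref{ehc1}, since the pullback to a component of $X_{m'}$ of the divisor $\sum_{j'}\gamma_{j,j'}D_{m,\alpha,j'}$ cut out by the image of $y_{m,\alpha,j}$ is, with multiplicities, a combination of the divisors $D_{m',\beta,k}$. Hence these maps extend uniquely to $L_\bullet^{\mathrm{ex}}$ by the universal property used in (1), giving $L_\bullet^{\mathrm{ex}}$ its co-simplicial structure, and the chart maps of (2) — being functorial — assemble into the desired morphism $L_\bullet^{\mathrm{ex}}\to M_\bullet$ of co-simplicial monoids. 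Among all of this, the only genuinely delicate verification is the identification in (2) of the unit group $(L_{m,\alpha}^{\mathrm{ex}})^{\ast}$ and the precise invocation of the chart-level exactification lemma; parts (1) and (3) amount to unwinding the definitions and the $\Gamma$-construction.
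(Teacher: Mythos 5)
Your proposal is correct and follows essentially the same route as the paper: part (1) via hypothesis (V) and Lemma \ref{ehc1} producing the unique unit image of the adjoined elements, part (2) by identifying the associated log-structure with the divisorial log-structure of the SNCD $\cup_{j}D_{m,\alpha,j}$, and part (3) from the $\Gamma$-construction together with Lemma \ref{ehc1}. Your rendering of (2) through the explicit splitting $L_{m,\alpha}^{\mathrm{ex}}\iso\N^{r_{m,\alpha}}\oplus\Z^{s_{m,\alpha}-r_{m,\alpha}}$ (so $\overline{L_{m,\alpha}^{\mathrm{ex}}}\iso\N^{r_{m,\alpha}}$ is the standard SNCD chart) is just a more explicit form of the paper's terse surjectivity/injectivity argument, and your uniqueness step in (1) — like the paper's — implicitly uses that the relevant monomials in the $x_{m,\alpha,j'}$ are nonzerodivisors on the smooth scheme $X_{m,\alpha}$, since the multiplicative monoid of $\Gamma(X_{m,\alpha},\O_{X_{m,\alpha}})$ is not cancellative in general.
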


\begin{proof} (1) If a divisor defined by the image of $y_j$ in $X_{m, \alpha}$ 
is $\sum_{j'=1}^{r_{m, \alpha}}\gamma_{j, j'}D_{m, \alpha, j'}$, 
then there is a unique unit $u$ of $\Gamma(X_{m, \alpha}, \O_{X_{m, \alpha}})$ such that the images of $y_j$ 
and $u\prod_{j'=1}^{r_{m, \alpha}}y_{j'}^{\gamma_{j, j'}}$ coincide with each other in 
$\Gamma(X_{m, \alpha}, \O_{X_{m, \alpha}})$ by Lemma \ref{ehc1} (2). 
Hence $\pm(1_{m, \alpha, j} - \sum_{j'=1}^{r_{m, \alpha}}\gamma_{j'}1_{m, \alpha, j'})$ should go to $u^{\pm 1}$. 
Since the unit $u$ is unique, we have a desired factorization. 

(2) Since $X_{s, m, \alpha} = \cup_{j=1}^{r_{m, \alpha}}D_{m, \alpha, j}$ is an SNCD of $X_{m, \alpha}$, 
the natural morphism $L_{m, \alpha} \rightarrow M_{m, \alpha}/\O_{X_{m, \alpha}}^\times$ is surjective 
at each stalk. Since we take the greatest quotient by the monomial relations of elements of $L_{m, \alpha}$ 
inside $\O_{X_{m, \alpha}}$, the homomorphism $L_{m, \alpha}^{\mathrm{ex}} \rightarrow M_{m, \alpha}$ 
is injective. 

(3) The assertion follows from our construction and Lemma \ref{ehc1}. 
\end{proof}

Let us define a simplicial log-scheme $({\cal Q}_\bullet^{\mathrm{ex}}, \mathscr M_\bullet)$ by 
$$
            {\cal Q}_\bullet^{\mathrm{ex}} = {\cal Q}_\bullet 
            \times_{\mathrm{Spf}\, \mathbb Z[L_\bullet]\widehat{\, }} \mathrm{Spf}\, \mathbb Z[L_\bullet^{\mathrm{ex}}]\widehat{\, }
$$
and the log-structure ${\mathscr M}_\bullet$ associated to the natural pre-log-structure $L_\bullet^{\mathrm{ex}} \rightarrow 
\Gamma({\cal Q}_\bullet^{\mathrm{ex}} , \O_{{\cal Q}_\bullet^{\mathrm{ex}}})$. 

\begin{lmm}\label{ehc4} The closed immersion 
$i_\bullet : (X_\bullet, M_\bullet) \rightarrow ({\cal Q}_\bullet, \mathscr L_\bullet)$
factors as 
$$
     (X_\bullet, M_\bullet)\, \displaystyle{\mathop{\longrightarrow}^{i^{\rm ex}_\bullet}}\, 
     ({\cal Q}_\bullet^{\mathrm{ex}}, \mathscr M_\bullet)\,  
     \displaystyle{\mathop{\longrightarrow}^{h_\bullet}}\,  
     ({\cal Q}_\bullet, \mathscr L_\bullet) 
$$
simplicial fine and saturated formal log-schemes over $\mathcal V$ such that 
$i^{\rm ex}_\bullet$ is an exact closed immersion and $h_\bullet$ is log-\'etale. 
Moreover, the underlying formal scheme ${\cal Q}_m^{\mathrm{ex}}$ is smooth 
over ${\cal V}$ for any $m$. 
\end{lmm}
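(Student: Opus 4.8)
The plan is to deduce all of the assertions from the explicit shape of the exactification monoid $L_\bullet^{\mathrm{ex}}$, which is Kato's abstract exactification procedure (\cite{Ka:log} 4.10, \cite{shiho:fund2} 2.2.1) written out level by level and already prepared in Lemma \ref{ehc3}. First I would record that shape: writing $e_j := 1_{m,\alpha,j} - \sum_{j'=1}^{r_{m,\alpha}}\gamma_{j,j'}\,1_{m,\alpha,j'}$ for $r_{m,\alpha} < j \le s_{m,\alpha}$, the set $\{1_{m,\alpha,1},\dots,1_{m,\alpha,r_{m,\alpha}}\} \cup \{e_{r_{m,\alpha}+1},\dots,e_{s_{m,\alpha}}\}$ is a $\mathbb Z$-basis of $L_{m,\alpha}^{\mathrm{gr}}$, and, since the multiplicities $\gamma_{j,j'}$ are $\ge 0$ (hypothesis (V)), in this basis one computes $L_{m,\alpha}^{\mathrm{ex}} = \mathbb N^{r_{m,\alpha}} \oplus \mathbb Z^{s_{m,\alpha}-r_{m,\alpha}}$. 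In particular each $L_{m,\alpha}^{\mathrm{ex}}$ is a free, hence fine and saturated, monoid, and $\mathbb Z[L_{m,\alpha}^{\mathrm{ex}}]$ is a Laurent polynomial ring over $\mathbb Z$; so both $({\cal Q}_\bullet,\mathscr L_\bullet)$ (charts $\mathbb N^{s_\bullet}$) and $({\cal Q}_\bullet^{\mathrm{ex}},\mathscr M_\bullet)$ (charts $L_\bullet^{\mathrm{ex}}$) are fine and saturated.

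The factorization $i_\bullet = h_\bullet \circ i_\bullet^{\mathrm{ex}}$ of simplicial formal log-schemes then reads off immediately: by Lemma \ref{ehc3}(1) the map $L_{m,\alpha} \to \Gamma(X_{m,\alpha},\O_{X_{m,\alpha}})$ factors through $L_{m,\alpha}^{\mathrm{ex}}$, which gives a morphism $X_{m,\alpha} \to {\cal Q}_{m,\alpha}^{\mathrm{ex}} = {\cal Q}_{m,\alpha} \times_{\mathrm{Spf}\,\widehat{\mathbb Z[L_{m,\alpha}]}} \mathrm{Spf}\,\widehat{\mathbb Z[L_{m,\alpha}^{\mathrm{ex}}]}$ sitting over the closed immersion $X_{m,\alpha} \to {\cal Q}_{m,\alpha}$, hence itself a closed immersion $i_{m,\alpha}^{\mathrm{ex}}$; by Lemma \ref{ehc3}(3) the $L_m^{\mathrm{ex}}$ form a co-simplicial monoid, so these assemble compatibly with faces and degeneracies and $h_\bullet$ is defined. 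For the exactness of $i_\bullet^{\mathrm{ex}}$ I would note that $(i_\bullet^{\mathrm{ex}})^*\mathscr M_\bullet$ is by construction the log structure on $X_\bullet$ associated to the composite prelog structure $L_\bullet^{\mathrm{ex}} \to \Gamma({\cal Q}_\bullet^{\mathrm{ex}},\O) \to \Gamma(X_\bullet,\O)$, and Lemma \ref{ehc3}(2) identifies this with $M_\bullet$; thus $i_\bullet^{\mathrm{ex}}$ is strict, a fortiori an exact closed immersion. For the log-\'etaleness of $h_\bullet$: it is the base change, along the chart morphism ${\cal Q}_\bullet \to \mathrm{Spf}\,\widehat{\mathbb Z[L_\bullet]}$, of $\mathrm{Spf}\,\widehat{\mathbb Z[L_\bullet^{\mathrm{ex}}]} \to \mathrm{Spf}\,\widehat{\mathbb Z[L_\bullet]}$ induced by $L_\bullet \hookrightarrow L_\bullet^{\mathrm{ex}} \subseteq L_\bullet^{\mathrm{gr}}$; since this inclusion is an isomorphism on associated groups, Kato's criterion (\cite{Ka:log} 3.5) makes the latter morphism log-\'etale, and log-\'etaleness is stable under base change.

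For the smoothness of ${\cal Q}_m^{\mathrm{ex}}$ over ${\cal V}$, the key is precisely the freeness just established: on each component ${\cal Q}_{m,\alpha} \to \widehat{\mathbb A}^{s_{m,\alpha}}_{{\cal V}}$ is smooth, being the fiber product over ${\cal V}$ of the smooth morphisms $\mathcal R_{l,\alpha} \to \widehat{\mathbb A}^{s_{l,\alpha}}_{{\cal V}}$ of hypothesis (III); identifying $\widehat{\mathbb A}^{s_{m,\alpha}}_{{\cal V}}$ with the $p$-adic completion over ${\cal V}$ of the toric model of $L_{m,\alpha} = \mathbb N^{s_{m,\alpha}}$, the scheme ${\cal Q}_{m,\alpha}^{\mathrm{ex}}$ is the base change of this smooth ${\cal V}$-morphism along the map of ${\cal V}$-models induced by $L_{m,\alpha} \hookrightarrow L_{m,\alpha}^{\mathrm{ex}} = \mathbb N^{r_{m,\alpha}} \oplus \mathbb Z^{s_{m,\alpha}-r_{m,\alpha}}$, and the target of that latter map, being the completion over ${\cal V}$ of $\mathbb A^{r_{m,\alpha}} \times (\mathbb A^1 \setminus \{0\})^{s_{m,\alpha}-r_{m,\alpha}}$, is smooth over ${\cal V}$; hence ${\cal Q}_{m,\alpha}^{\mathrm{ex}}$ — and therefore ${\cal Q}_m^{\mathrm{ex}} = \coprod_{\alpha \in I_m} {\cal Q}_{m,\alpha}^{\mathrm{ex}}$ — is smooth over ${\cal V}$. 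To close off Proposition \ref{emb} one finally records that $({\cal Q}_\bullet^{\mathrm{ex}},\mathscr M_\bullet) \to {\cal V}$ is log-smooth (the log-\'etale $h_\bullet$ followed by the log-smooth $({\cal Q}_\bullet,\mathscr L_\bullet) \to {\cal V}$) and that multiplication by $p$ on $L_\bullet^{\mathrm{gr}} = (L_\bullet^{\mathrm{ex}})^{\mathrm{gr}}$ carries $L_\bullet^{\mathrm{ex}}$ into itself, so the Frobenius $\sigma_\bullet$ of $({\cal Q}_\bullet,\mathscr L_\bullet)$ lifts to $({\cal Q}_\bullet^{\mathrm{ex}},\mathscr M_\bullet)$ compatibly with $\sigma$ on ${\cal V}$. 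I expect no deep obstacle remains at this stage — the genuine content is upstream, in Lemma \ref{ehc3} and the coskeleton/$\Gamma$-construction combinatorics — but one must be careful not to conflate ``log-smooth over ${\cal V}$'' with ``smooth over ${\cal V}$'': it is the freeness $L_\bullet^{\mathrm{ex}} \cong \mathbb N^{r_\bullet} \oplus \mathbb Z^{s_\bullet - r_\bullet}$, and not the bare log-\'etaleness of $h_\bullet$, that forces the last assertion.
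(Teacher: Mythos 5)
Your proposal is correct and follows essentially the same route as the paper: exactness of $i^{\rm ex}_\bullet$ from Lemma \ref{ehc3}(2), log-\'etaleness of $h_\bullet$ from $(L_\bullet^{\mathrm{ex}})^{\mathrm{gr}} = L_\bullet^{\mathrm{gr}}$ via Kato's criterion, and the factorization from Lemma \ref{ehc3}(1),(3). Your smoothness argument, via the identification $L_{m,\alpha}^{\mathrm{ex}} \cong \mathbb N^{r_{m,\alpha}} \oplus \mathbb Z^{s_{m,\alpha}-r_{m,\alpha}}$ (using $\gamma_{j,j'} \geq 0$ from hypothesis (V)) and base change of the smooth chart morphism, is just a more structural phrasing of the paper's explicit computation $\Gamma({\cal Q}_{m,\alpha}^{\mathrm{ex}}, \O) = \Gamma({\cal Q}_{m,\alpha}, \O)\bigl[(y_j/\prod_{j'}y_{j'}^{\gamma_{j,j'}})^{\pm 1}\bigr]$, so the two arguments amount to the same thing.
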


\begin{proof} $i^{\rm ex}_\bullet$ is an exact closed immersion by Lemma \ref{ehc3} (2). 
Since $(L_\bullet^{\mathrm{ex}})^{\mathrm{gr}} = L_\bullet^{\mathrm{gr}}$, $h_m$ is log-\'etale. 
By the construction we have
$$
     \Gamma({\cal Q}_{m, \alpha}^{\mathrm{ex}}, \O_{{\cal Q}_{m, \alpha}^{\mathrm{ex}}}) 
     =  \Gamma({\cal Q}_{m, \alpha}, \O_{{\cal Q}_{m, \alpha}})
     \left[\left. (y_j/\prod_{j'=1}^{r_{m, \alpha}}y_{j'}^{\gamma_{j'}})^{\pm 1}\, \right|\, 
     r_{m, \alpha} < j \leq s_{m, \alpha} \right], 
$$
where $\gamma_j$'s are as in the proof of Lemma \ref{ehc3}. 
Hence ${\cal Q}_{m, \alpha}^{\mathrm{ex}}$ is smooth over ${\cal V}$. 
\end{proof}

Finally, the Frobenius endomorphism $\sigma_{{\cal Q}_\bullet}$ on $({\cal Q}_\bullet, \mathscr L_\bullet)$ 
can be extended to the Frobenius $\sigma_\bullet$ on 
$({\cal Q}_m^{\mathrm{ex}}, \mathscr M_\bullet)$ in such a way  $\sigma_\bullet$ acts  as 
multiplication by $p$ on $L_\bullet^{\mathrm{ex}}$. 

This completes our proof of Proposition \ref{emb}. 
\hspace*{\fill} $\Box$

\vspace*{3mm}

\begin{rmk}\label{ehc0} {\rm 
\begin{list}{}{}
\item[(1)]  We could have performed a similar construction for  an \'etale hypercovering 
coming from a truncated one of any level (not only of level 1). 
For a general \'etale hypercovering we
are not able to find a similar result: we can find the good embedding
system as before only if we have a truncated system.
We observe that an alternative methods for the proofs of the present article 
would be to use   truncated systems via  the limit arguments in \cite{ts} 7.5. 
\item[(2)] 
We say  that the NCD  $X_s$ of $X$ has {\it self-intersections} if 
there are some $\alpha \in I_0$ and $j \ne j'$ such that 
the images of  the generic points of $D_{0, \alpha, j}$ and 
$D_{0, \alpha, j'}$ by the \'etale covering $\pi : X_0 \rightarrow X$ (as in the hypotheses (i), (iii)) are  the same. 
If the NCD $X_s$ of $X$ does not have self-intersections, 
then one can take an \'etale covering $X_0$ of $X$ satisfying the hypotheses (i), (iii), 
and put $X_1 = X_0 \times_X X_0$, which is given by 
$X_{0,\alpha} \times_X X_{0,\alpha'}\, (\alpha, \alpha' \in I_0)$, 
with natural projections $\pi_0$, $\pi_1$ 
and the diagonal morphism $\delta$. 
Then this $1$-truncated simplicial scheme over $X$ also satisfies the hypotheses  (ii), (iv). 
In this case our $(X_\bullet, M_\bullet)$ is a \v{C}ech hypercovering of $(X_0, M_0)$ over $(X, M)$. 
Hence, if one takes a smooth lift $({\cal Q}_0, L_0^a)$ of $(X_0, M_0)$, 
then from the \v{C}ech diagram $({\cal Q}_\bullet, \mathscr L_\bullet)$ of $({\cal Q}_0, L_0^a)$ over ${\cal V}$,
one can obtain a similar exactification $({\cal Q}_\bullet^{\mathrm{ex}}, \mathscr M_\bullet)$.
 \end{list}}
\end{rmk}

\medskip

We fix a good embedding system 
$i^{\rm ex}_\bullet : (X_\bullet, M_\bullet) \rightarrow 
({\cal Q}^{\rm ex}_{\bullet}, {\mathscr M}_{\bullet})$ as in Proposition \ref{emb}. 
Then using \'etale descent  for rigid cohomology  \cite{ct} 9.1.1 and forgetting the log-structure, 
we may  write ($X_{s,\bullet}$ is the induced \'etale hypercovering of $X_s$)

\begin{equation}
\label{supportrigid5}
H^m_{X_s,{\rm rig}}(X) \iso
\R ^m \Gamma({]{X}_{\bullet}[_{{\cal Q}^{\rm ex}_{\bullet}}}, [\Omega^{\bullet}_{]{X}_{\bullet}[_{{\cal Q}^{\rm ex}_{\bullet}}}
\rightarrow
j^{\dag}_{]{X}_{\bullet}\setminus X_{s,\bullet}[_{{\cal Q}^{\rm ex}_{\bullet}}}
\Omega^{\bullet}_{]{{X}_{\bullet}}[_{{\cal Q}^{\rm ex}_{\bullet}}}]).
\end{equation}

\noindent
We are now ready to interpret the two complexes which appear in the simple complexes of the right hand side of 
\eqref{supportrigid5}. The first one is just calculating the rigid cohomology 
$H^m_{\mathrm{rig}}((X, X))$ of the pair $(X, X)$. 
The second complex is nothing but the rigid cohomology $H^m_{\mathrm{rig}}((X \setminus X_s, X))$ 
of the pair $(X \setminus X_s, X)$. 
Then we may apply Shiho's result \cite{shiho:fund2} 2.4.4:  
it says that, for the smooth log-scheme $(X_m, M_m)$ which has a Zariski type log-structure, 
its log-convergent cohomology over $\mathcal V$ 
coincides with the rigid cohomology of the pair  $(X_m \setminus X_{s, m}, X_m)$. 
Note again that $M_m$ is the log-structure of $X_m$ induced from the SNCD $X_{s, m}$ and 
the trivial log locus of $(X_m, M_m)$ is $X_m \setminus X_{s, m}$. Hence we have 
$$
      H^m_{\mathrm{rig}}((X \setminus X_s, X)) \iso H^m_{\mathrm{log-conv}}((X, M)/\mathcal V). 
$$
To avoid confusion we stress the  fact that   $H^{m}_{\rm  log-conv}((X, M)/\mathcal V)$ 
denotes  the $m$-th log-convergent cohomology groups of $(X, M)$ relative  to $\mathcal V$. 

To connect our construction to the log theory we will use  $]-[^{\mathrm{log}}$ 
to refer to log-tubes for exact or non exact closed immersions, which has already appeared in section 3: 
for  example  the log-tubes of ${X}_{\bullet} $ for a generic embedding (not exact) 
${\cal Q}_{\bullet}$ will be indicated by $]{X}_{\bullet}[^{\rm log}_{{\cal Q}_{\bullet}}$. 
Of course for our ${\cal Q}^{\rm ex}_{\bullet}$ in the proof of Proposition \ref{emb} we have  
$$
]X_{\bullet}[^{\rm log}_{{\cal Q}_{\bullet}}= 
]X_{\bullet}[_{{\cal Q}^{\rm ex}_{\bullet}}, 
$$
where the second is the "classical" tube, since the closed immersion 
$i^{\rm ex}_\bullet : (X_\bullet, M_\bullet) \rightarrow ({\cal Q}^{\rm ex}_{\bullet}, \mathscr M_\bullet)$ is 
an exactification of $(X_\bullet, M_\bullet) \rightarrow ({\cal Q}_{\bullet} , \mathscr L_\bullet)$. 
Hence the log-convergent cohomology 
$H^m_{\rm log-conv}((X, M)/{\cal V}) = H^m_{\rm log-conv}((X_\bullet, M_\bullet)/{\cal V})$ can be calculated by 
$$
\R^m\Gamma(]{{X}_{\bullet}}[^{\rm log}_{{\cal Q}^{\rm ex}_{\bullet}}, 
\Omega^{\bullet}_{]{{X}_{\bullet}}[^{\rm log}_{{\cal Q}^{\rm ex}_{\bullet}}}\! \! < \! {\mathscr M}_{\bullet} \! >) =
\R^m\Gamma(]{{X}_{\bullet}}[_{{\cal Q}^{\rm ex}_{\bullet}}, 
\Omega^{\bullet}_{]{{X}_{\bullet}}[^{}_{{\cal Q}^{\rm ex}_{\bullet}}}\! \! < \! {\mathscr M}_{\bullet} \! >), 
$$
where $\Omega^{\bullet}_{]{{X}_{\bullet}}[^{}_{{\cal Q}^{\rm ex}_{\bullet}}}< \! {\mathscr M}_{\bullet} \! >$ 
indicates the restriction of the log differential module of the generic fiber of 
$({\cal Q}^{ \rm ex}_{\bullet}, \mathscr M_\bullet)$ to $]{{X}_{\bullet}}[^{}_{{\cal Q}^{\rm ex}_{\bullet}}$. 
We may then write \eqref{supportrigid5} as
\begin{equation*}
H^m_{X_s,{\rm rig}}(X) \iso
\R ^m \Gamma( {] { X}_{\bullet}[^{}_{{\cal Q}^{\rm ex}_{\bullet}}}, 
[\Omega^{\bullet}_{]{X}_{\bullet}[^{}_{{\cal Q}^{\rm ex}_{\bullet}}}
\rightarrow
\Omega^{\bullet}_{] { X}_{\bullet}[^{}_{{\cal Q}^{\rm ex}_{\bullet}}}
\! \! < \! {\mathscr M}_{\bullet} \! >])
\end{equation*}

\medskip
$$\ast \ast \ast$$

Now we would like to continue our  interpretation. 
As a matter of fact we are going 
to use another {\it exact embedding system} 
to calculate the cohomology of the complex 
$[\Omega^{\bullet}_{]{X}_{\bullet}[^{}_{{\cal Q}^{\rm ex}_{\bullet}}}
\rightarrow
\Omega^{\bullet}_{] { X}_{\bullet}[^{}_{{\cal Q}^{\rm ex}_{\bullet}}}
\! \! < \! {\mathscr M}_{\bullet} \! >]$. 
Indeed we will use a new good
embedding system $({\widetilde {\cal Q}}_{\bullet}^{ \rm ex}, \widetilde{\mathscr M}_\bullet$) 
(exact and smooth as before) admitting a map ${\widetilde {\cal Q}}_{\bullet}^{ \rm ex} \rightarrow \mathscr C$  which 
 is log-smooth over $ (\mathscr C, {\mathscr N})$, {\it i.e.}, we take
into account that $(X, M)$ is log-smooth over $(C, N)$. 
From now, by shrinking $C$, we fix a Frobenius $\sigma_{\mathscr C}$ on $(\mathscr C, \mathscr N)$ 
which is defined by $\sigma_{\mathscr C}(t) = t^p$, where $t$ is a coordinate at $\hat{s}$ in $\mathscr C$ over $\mathcal V$.

\begin{prp} \label{emb2} In the previous notation, it is possible to find a good embedding system 
$X_{\bullet} \rightarrow  {\widetilde {\cal Q}}^{ \rm ex}_{\bullet}$ which fits in the commutative diagram

\begin{equation*} 
\xymatrix{
(X_{\bullet}, M_{\bullet}) \ar[d] \ar[r]^{{\tilde {i}}^{\rm ex}_{\bullet}}& 
({\widetilde{\cal Q}}^{\rm ex}_{\bullet}, {\widetilde {\mathscr M}}_{\bullet})  \ar[d] \\
 (C, N) \ar[r] & (\mathscr C, {\mathscr N})}
\end{equation*}

\noindent
where $\widetilde{\mathcal Q}^{\mathrm{ex}}_\bullet$ 
is smooth and separated of finite type over $\mathcal V$, 
the horizontal maps are exact closed immersions, and 
$(\widetilde{\mathcal Q}^{\mathrm{ex}}_\bullet, \widetilde{\mathscr M}_\bullet)$ is 
log-smooth over $(\mathscr C, \mathscr N)$ such that $\widetilde{\mathscr M}_\bullet$ 
comes from a log-structure on the Zariski topology of $\widetilde{\mathcal Q}^{\mathrm{ex}}_\bullet$. 
Moreover  $(\widetilde{\mathcal Q}^{\mathrm{ex}}_\bullet, \widetilde{\mathscr M}_\bullet)$ admits a lift $\widetilde{\sigma}_\bullet$ 
of Frobenius which is compatible with the Frobenius $\sigma_{\mathscr C}$ on $(\mathscr C, \mathscr N)$.
\end{prp}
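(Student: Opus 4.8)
The plan is to repeat, essentially verbatim, the construction carried out in the proof of Proposition~\ref{emb}, but taking all fibre products over $\mathscr C$ rather than over $\Spf\mathcal V$, and starting from local models that are log-smooth over $(\mathscr C,\mathscr N)$ instead of over $\mathcal V$. First I would choose the $1$-truncated \'etale hypercovering $X_{\bullet\le 1}$ of $X$ satisfying hypotheses (i)--(iv) of the proof of Proposition~\ref{emb}, with the extra requirement that $f$ be \emph{monomial} in the chosen coordinates: on each $X_{0,\alpha}$ meeting $X_s$, a local coordinate $t$ of $s$ in $C$ pulls back, in the \'etale coordinate system $x_{0,\alpha,1},\dots,x_{0,\alpha,n+1}$ of hypothesis (iii), to $x_{0,\alpha,1}\cdots x_{0,\alpha,r_{0,\alpha}}$, while $f|_{X_{0,\alpha}}$ is smooth if $X_{0,\alpha}\cap X_s=\varnothing$. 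Such a covering exists because $f:(X,M)\to(C,N)$ is log-smooth and $X_s=f^{-1}(s)$ is a reduced divisor: after shrinking $C$ so that it is \'etale over $\mathbb A^1_k$ via $t$, Kato's structure theorem \cite{Ka:log} 3.5 gives, \'etale-locally on $X$, an \'etale chart of exactly this monomial shape, and the refinement procedure producing $X_1$ (a sufficiently fine Zariski refinement of $X_0\times_X X_0$) and $X_\bullet=\mathrm{cosk}_1^X(X_{\bullet\le 1})$ carries this description along $\pi_0,\pi_1$ to every level, so that on $X_{m,\alpha}$ the coordinate $t$ pulls back to the product of the equations of the branches $D_{m,\alpha,1},\dots,D_{m,\alpha,r_{m,\alpha}}$ of $X_{s,m,\alpha}$.

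Next I would lift the local models over $\mathscr C$. Fix $\sigma_{\mathscr C}$ with $\sigma_{\mathscr C}(t)=t^p$ and shrink $C$ so that $\mathscr C$ is \'etale over $\widehat{\mathbb A}^1_{\mathcal V}$ via $t$. For $l=0,1$ and $\alpha\in I_l$ the log-scheme $(X_{l,\alpha},M_{l,\alpha})$ is affine and log-smooth over $(C,N)$, and $(C,N)\hookrightarrow(\mathscr C,\mathscr N)$ is an exact closed immersion; by the deformation theory of log-smooth morphisms one obtains an affine formally log-smooth lift $(\mathcal R_{l,\alpha},L^a_{l,\alpha})$ over $(\mathscr C,\mathscr N)$, which may be chosen with a chart $L_{l,\alpha}=\mathbb N^{r_{l,\alpha}}$ (so $s_{l,\alpha}=r_{l,\alpha}$, as allowed in Proposition~\ref{emb}), with structure morphism $\mathcal R_{l,\alpha}\to\mathscr C$ sending $t$ to $y_{l,\alpha,1}\cdots y_{l,\alpha,r_{l,\alpha}}$, and with a Frobenius $\sigma_{\mathcal R_{l,\alpha}}(y_{l,\alpha,j})=y_{l,\alpha,j}^p$, which is compatible with $\sigma_{\mathscr C}$ precisely because $t\mapsto\prod_j y_{l,\alpha,j}$. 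Since $\mathscr C$ is \'etale over $\widehat{\mathbb A}^1_{\mathcal V}$, the underlying $\mathcal R_{l,\alpha}$ is \'etale over $\widehat{\mathbb A}^{n+1}_{\mathcal V}$, hence smooth and separated of finite type over $\mathcal V$.

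Finally I would run the $\Gamma$-construction of \cite{ct} 11.2, \cite{ts} 7.3 \emph{relative to $\mathscr C$}, forming
\[
(\widetilde{\mathcal Q}_\bullet,\widetilde{\mathscr L}_\bullet)=\Gamma_0^{\mathscr C}((\mathcal R_0,L^a_0))^{\le 0}\times_{\mathscr C}\Gamma_1^{\mathscr C}((\mathcal R_1,L^a_1))^{\le 1}=\Bigl(\textstyle\prod_{\xi\in\mathrm{Ob}(\Delta[1]/[\bullet])}(\mathcal R_{l_\xi},L^a_{l_\xi})\Bigr)_{\bullet},
\]
the products being over $\mathscr C$, and then exactifying as in \cite{Ka:log} 4.10 and \cite{shiho:fund2} 2.2.1: one defines the monoids $L^{\mathrm{ex}}_{m,\alpha}$, the co-simplicial monoid $L^{\mathrm{ex}}_\bullet$, and sets $\widetilde{\mathcal Q}^{\mathrm{ex}}_\bullet=\widetilde{\mathcal Q}_\bullet\times_{\Spf\mathbb Z[L_\bullet]\widehat{\,}}\Spf\mathbb Z[L^{\mathrm{ex}}_\bullet]\widehat{\,}$ with its Zariski log-structure $\widetilde{\mathscr M}_\bullet$. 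This produces a closed immersion $(X_\bullet,M_\bullet)\to(\widetilde{\mathcal Q}_\bullet,\widetilde{\mathscr L}_\bullet)$ over $(\mathscr C,\mathscr N)$ with a Frobenius $\widetilde\sigma_\bullet$, and $(\widetilde{\mathcal Q}_\bullet,\widetilde{\mathscr L}_\bullet)$ is log-smooth over $(\mathscr C,\mathscr N)$ because it is an iterated fibre product over $(\mathscr C,\mathscr N)$ of the log-smooth $(\mathcal R_{l,\alpha},L^a_{l,\alpha})$; the analogues of Lemmas~\ref{ehc3} and~\ref{ehc4} then give the exact closed immersion $\widetilde i^{\mathrm{ex}}_\bullet$, the log-\'etale $h_\bullet$, and the extension of $\widetilde\sigma_\bullet$, whence $(\widetilde{\mathcal Q}^{\mathrm{ex}}_\bullet,\widetilde{\mathscr M}_\bullet)$ is again log-smooth over $(\mathscr C,\mathscr N)$ with a compatible Frobenius and the square of the statement commutes.

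The step that I expect to require the real argument is the smoothness of $\widetilde{\mathcal Q}^{\mathrm{ex}}_m$ over $\mathcal V$. In contrast with Proposition~\ref{emb}, the fibre products are now taken over $\mathscr C$, so $\widetilde{\mathcal Q}_m$ itself is in general singular over $\mathcal V$ (for two branches one meets the quadric cone $\{y_1y_2=z_1z_2\}$). The point I would exploit is that the exactification resolves precisely this singularity: in $L^{\mathrm{ex}}_\bullet$ one inverts the differences of the chart generators lifting the same branch of $X_{s,m,\alpha}$, and since the images of the chart generator of $\mathscr N$ in the various factors coincide, the product of such units is $1$ in $L^{\mathrm{ex}}$; on rings this turns each defining equation $\prod_j y_j^{(\xi)}=\prod_j y_j^{(\xi')}$ of $\widetilde{\mathcal Q}_m$ inside $\prod_{\xi,\mathcal V}\mathcal R_{l_\xi}$ into an identity, so that $\mathcal O(\widetilde{\mathcal Q}^{\mathrm{ex}}_{m,\alpha})$ becomes a localization of the coordinate ring of the smooth formal $\mathcal V$-scheme $\prod_{\xi,\mathcal V}\mathcal R_{l_\xi}$. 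Carrying this out in general, using the explicit description of $L^{\mathrm{ex}}_{m,\alpha}$ as in the proof of Lemma~\ref{ehc4}, and checking along the way that the chart $\mathbb N\to L^{\mathrm{ex}}_{m,\alpha}$ sending $1$ to the sum of the branch generators meets Kato's log-smoothness criterion over $(\mathscr C,\mathscr N)$, is the technical core; the remainder is formal, being a base change over $\mathscr C$ of the arguments in the proof of Proposition~\ref{emb}.
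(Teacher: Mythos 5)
Your route is genuinely different from the paper's, and the mechanism you isolate is the right one, but the step you yourself call ``the technical core'' is precisely the step the paper's construction is designed to avoid, and in your write-up it remains a sketch. The paper does \emph{not} redo the construction of Proposition \ref{emb} relative to $(\mathscr C,\mathscr N)$: it keeps the absolute system $({\cal Q}^{\rm ex}_\bullet,\mathscr M_\bullet)$ already built there, forms the $p$-adically complete product $({\cal Q}^{\rm ex}_\bullet,\mathscr M_\bullet)\times(\mathscr C,\mathscr N)$ over $\mathcal V$ --- whose underlying formal scheme is automatically smooth over $\mathcal V$, being a product of smooth ones --- and then repairs the failure of exactness of $(X_\bullet,M_\bullet)\to({\cal Q}^{\rm ex}_\bullet,\mathscr M_\bullet)\times(\mathscr C,\mathscr N)$ by a one-step enlargement of the chart, $\widetilde L^{\rm ex}_{m,\alpha}=L^{\rm ex}_{m,\alpha}\oplus\mathbb N[\pm((1_{m,\alpha,1}+\cdots+1_{m,\alpha,r_{m,\alpha}})-1)]$, i.e.\ by adjoining the single unit $u_\alpha$ with $x_{m,\alpha,1}\cdots x_{m,\alpha,r_{m,\alpha}}=u_\alpha t$. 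The analogue of Lemma \ref{ehc4} (Lemma \ref{emb33}) then gives at once the exactness of $\widetilde i^{\rm ex}_\bullet$, log-\'etaleness over $({\cal Q}^{\rm ex}_\bullet,\mathscr L_\bullet)\times(\mathscr C,\mathscr N)$ (hence log-smoothness over $(\mathscr C,\mathscr N)$), smoothness of $\widetilde{\cal Q}^{\rm ex}_m$ over $\mathcal V$ (adjoining $v^{\pm1}$ with $vt=y_1\cdots y_r$ to a smooth ring is a smooth hypersurface in a localization), and the extension of $\sigma_\bullet\times\sigma_{\mathscr C}$. In particular the paper never needs your extra normalization $f^\ast t=x_{0,\alpha,1}\cdots x_{0,\alpha,r_{0,\alpha}}$: the discrepancy unit is absorbed into the monoid.

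In your construction the fibre products are taken over $\mathscr C$, so $\widetilde{\cal Q}_m$ is singular over $\mathcal V$ and everything rests on the claim that the exactification resolves these singularities at every simplicial level. The mechanism is correct: in the two-factor, two-branch model, adjoining $u_i$ with $y'_i=u_iy_1$-type relations and $u_1u_2=1$ does turn $y_1y_2-y'_1y'_2$ into an identity, and the exactified ring is the $p$-adic completion of $\mathcal V[y_1,y_2][u,u^{-1}]$ (times the unused coordinates), which is smooth. But your blanket statement that $\mathcal O(\widetilde{\cal Q}^{\rm ex}_{m,\alpha})$ ``becomes a localization of'' the coordinate ring of $\prod_{\xi,\mathcal V}\mathcal R_{l_\xi}$ is not literally what comes out, and completing the argument requires: (a) the level-$m$ bookkeeping showing that the branch coordinates of each factor match bijectively the divisors $D_{m,\beta,j}$ (this uses hypotheses (ii), (iv), Lemma \ref{ehc1} and the reducedness of $X_s$, and is where your choices $s_{l,\alpha}=r_{l,\alpha}$ and monomiality of $f$ enter); (b) an explicit computation, in the style of Lemma \ref{ehc4}, identifying the relative exactified chart as (free monoid on one generator per branch) plus a free abelian group of units and the corresponding ring as smooth over $\mathcal V$; (c) the simultaneous existence of the relative lifts $\mathcal R_{l,\alpha}$ over $(\mathscr C,\mathscr N)$ with exact monomial structure map, closed immersion of $X_{l,\alpha}$ and Frobenius $y_j\mapsto y_j^p$ (this follows from Kato's chart theorem plus unobstructedness of log-smooth deformations of affines, but it has to be said). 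None of these is fatal, so your proof can be completed; as written, however, the decisive smoothness assertion is verified only in a model case, whereas the paper's detour through the absolute embedding system reduces it to a one-line check.
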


\begin{proof}  
Let us keep the notations as in the proof of Proposition \ref{emb}. 
If we consider the $p$-adically complete fiber product 
$({\cal Q}_\bullet ^{\rm ex}, \mathscr M_\bullet) \times (\mathscr C, {\mathscr N})$ 
over ${\mathcal V}$, then the natural morphism 
$(X_\bullet, M_\bullet) \rightarrow ({\cal Q}_\bullet^{\rm ex}, \mathscr M_\bullet) \times (\mathscr C, {\mathscr N})$ 
is a closed immersion, but is not exact. We will need to modify it in order to get an exact one. 
Note that the log-structure $({\cal Q}_\bullet^{\rm ex}, \mathscr M_\bullet) \times (\mathscr C, {\mathscr N})$ is 
the associated log-structure to the monoid $L_\bullet^{\mathrm{ex}} \oplus \mathbb N$. 

We define a co-simplicial 
monoid $\widetilde{L}_\bullet^{\mathrm{ex}}$ by 
$$
      \begin{array}{l}
      \widetilde{L}_m^{\mathrm{ex}} = \prod_{\alpha \in I_m}\widetilde{L}_{m, \alpha}^{\mathrm{ex}}, \\
      \widetilde{L}_{m, \alpha}^{\mathrm{ex}} = 
      L_{m, \alpha}^{\mathrm{ex}} \oplus \mathbb N[\pm((1_{m, \alpha, 1} + \cdots + 1_{m, \alpha, r_{m, \alpha}}) - 1)]\, \, \, 
      \mbox{in}\, \, \, (L_{m, \alpha}^{\mathrm{ex}} \times \mathbb N)^{\mathrm{gr}}, 
      \end{array}
$$
where $1$ is the generator of the last component $\mathbb N$. Indeed, 
if $\pi_\xi(X_{m, \alpha}) \subset X_{m', \alpha'}$ for $\xi : [m] \rightarrow [m']$, 
the image of the element $\pm((1_{m, \alpha, 1} + \cdots + 1_{m, \alpha, r_{m, \alpha}}) - 1)$ 
is contained in $\widetilde{L}_{m', \alpha'}^{\mathrm{ex}}$ by the condition (V) and Lemma \ref{ehc1}. 
Hence the collection $\{ \widetilde{L}_m^{\mathrm{ex}}\, |\, m \geq 0 \}$ forms a co-simplicial monoid. 
Since there is a unique unit $u_\alpha$ on $X_{m, \alpha}$ 
such that $x_{m, \alpha, 1}x_{m, \alpha, 2}\cdots x_{m, \alpha, r_\alpha} = u_\alpha t$, 
the collection $\{ \widetilde{L}_\bullet^{\mathrm{ex}} \rightarrow M_m \}$ of homomorphisms 
induces a homomorphism $\widetilde{L}_\bullet^{\mathrm{ex}} \rightarrow M_\bullet$
of co-simplicial monoids. Moreover, 
the associated log-structure on $X_\bullet$ to the homomorphism 
$\widetilde{L}_\bullet^{\mathrm{ex}} \rightarrow \Gamma(X_\bullet, {\cal O}_{X_\bullet})$ 
of monoids is the given log-structure $M_\bullet$. 

We define a simplicial formal log-scheme $(\widetilde{{\cal Q}}_\bullet^{\mathrm{ex}}, \widetilde{\mathscr M}_\bullet)$ over ${\cal V}$ 
by 
$$ \widetilde{{\cal Q}}_\bullet^{\mathrm{ex}} 
      = ({\cal Q}_\bullet^{\rm ex} \times \mathscr C) 
      \times_{\mathrm{Spf}\, \mathbb Z[L_\bullet^{\mathrm{ex}} \times \mathbb N]\widehat{\, }} 
      \mathrm{Spf}\, \mathbb Z[\widetilde{L}_\bullet^{\mathrm{ex}}]\widehat{\, } $$
       and 
      $\widetilde{\mathscr M}_\bullet $ is the log-structure  associated to the natural homomorphism $ 
     \widetilde{L}_\bullet^{\mathrm{ex}} \rightarrow \Gamma(\widetilde{{\cal Q}}_\bullet^{\mathrm{ex}}, {\cal O}_{\widetilde{{\cal Q}}_\bullet^{\mathrm{ex}}})$.
By the similar proof of Lemma \ref{ehc4} we have 
 
\begin{lmm}  \label{emb33}
\begin{list}{}{}
\item[\mbox{\rm (1)}] There is a natural commutative diagram 
$$
       \begin{array}{ccc}
       & &(\widetilde{{\cal Q}}_\bullet^{\mathrm{ex}}, \widetilde{\mathscr M}_\bullet) \\
       &\nearrow &\downarrow \\
           (X_\bullet, M_\bullet) &\rightarrow &({\cal Q}_\bullet^{\mathrm{ex}}, \mathscr L_\bullet) 
           \times (\mathscr C, {\mathscr N}) \\
           \downarrow& &\hspace*{18mm} \downarrow \mbox{\rm 2-nd proj.}\\
           (C, N) &\rightarrow &(\mathscr C, {\mathscr N})
       \end{array}
$$
of formal log-schemes over ${\cal V}$ such that 
$(X_\bullet, M_\bullet) \rightarrow (\widetilde{{\cal Q}}_\bullet^{\mathrm{ex}}, \widetilde{\mathscr M}_\bullet)$ 
is an exact closed immersion and 
$(\widetilde{{\cal Q}}_\bullet^{\mathrm{ex}}, \widetilde{\mathscr M}_\bullet) \rightarrow 
({\cal Q}_\bullet^{\mathrm{ex}}, \mathscr L_\bullet) \times (\mathscr C, {\mathscr N})$ is log-\'etale. 
In particular, each level  of $(\widetilde{{\cal Q}}_\bullet^{\mathrm{ex}}, \widetilde{\mathscr M}_\bullet)$ 
is log-smooth over $(\mathscr C, {\mathscr N})$. 
\item[\mbox{\rm (2)}] The underlying simplicial formal scheme 
$\widetilde{{\cal Q}}_\bullet^{\mathrm{ex}}$ is smooth over ${\cal V}$.  
\item[\mbox{\rm (3)}] The morphism $\sigma_\bullet \times \sigma_{\mathscr C}$ on 
$({\cal Q}_\bullet^{\mathrm{ex}}, \mathscr M_\bullet) \times (\mathscr C, {\mathscr N})$ 
can be extended  to a morphism 
$\widetilde{\sigma}_\bullet : (\widetilde{{\cal Q}}_\bullet^{\mathrm{ex}}, \widetilde{\mathscr M}_\bullet) 
\rightarrow (\widetilde{{\cal Q}}_\bullet^{\mathrm{ex}}, \widetilde{\mathscr M}_\bullet)$ 
such that $\widetilde{\sigma}_\bullet$ is compatible with the Frobenius $\sigma$ on ${\cal V}$ 
and acts by multiplying $p$ on $\widetilde{L}_\bullet^{\mathrm{ex}}$. 
\end{list}
\end{lmm}

Now the proof of  Proposition \ref{emb2} is complete. 
\end{proof}

Hence we may use the exact embedding of the previous proposition to calculate the  simple complex appearing in  \eqref{supportrigid5}.
Note that the log-tubes  here coincide with the usual ones: 
${]{X}_{\bullet}[^{\rm log}_{{\widetilde{\cal Q}}_{\bullet }}}= {] { X}_{\bullet}[_{{\widetilde{\cal Q}}^{\rm ex}_{\bullet }}}$. 
This is true because the immersion is exact and we have

\begin{equation}
\label{supportrigid6.tris}
H^m_{X_s,{\rm rig}}(X) \iso
\R^m \Gamma({]{X}_{\bullet}[_{{\widetilde{\cal Q}}^{\rm ex}_{\bullet}}}, 
[\Omega^{\bullet}_{]{X}_{\bullet}[_{{\widetilde{\cal Q}}^{\rm ex}_{\bullet}}}
\rightarrow
\Omega^{\bullet}_{]{{X}_{\bullet}}[_{{\widetilde{\cal Q}}^{\rm ex}_{\bullet}}} \! \! < {\widetilde{\mathscr M}_\bullet} >]).
\end{equation}

\noindent 
But we can also introduce an admissible covering of $ {]{X}_{\bullet}[_{{\widetilde{\cal Q}}^{\rm ex}_{\bullet }}}$:  
this is given by the inverse image  of the admissible covering of ${\mathscr C}_K$ given by the tube of  $\{s \}$ (open unit disk hence quasi-Stein) 
and $V$ where $V$ is a strict  affinoid neighborhood of $C \setminus \{ s \}$ in $\mathscr C$. 
The inverse image of such a covering  gives  an admissible covering of ${]{X}_{\bullet}[_{{\widetilde{\cal Q}}^{\rm ex}_{\bullet }}}$:  
${]{X}_{s,\bullet}[_{{\widetilde{\cal Q}}^{\rm ex}_{\bullet }}}$ and $V_{\bullet}$ which are respectively  
the inverse image of the tube of $\{ s \}$ and of $V$  
in ${]{X}_{\bullet}[_{{\widetilde{\cal Q}}^{\rm ex}_{\bullet }}}$.  In particular  the open immersion 
  $\iota_{\bullet K} : {]{X}_{s,\bullet}[_{{\widetilde{\cal Q}}^{\rm ex}_{\bullet}}} 
 \rightarrow {]{X}_{\bullet}[_{{\widetilde{\cal Q}}^{\rm ex}_{\bullet}}}$ is a quasi-Stein  map.
The restrictions to $V_{\bullet}$ of  the two complexes 
which appear in \eqref{supportrigid6.tris} are the same: using this fact 
and $\mathbb R\iota_{\bullet K \ast} = \iota_{\bullet K \ast}$ 
for coherent sheaves by the quasi-Stein property, we can again re-write  \eqref{supportrigid6.tris}. 
In fact, $H^i_{X_s, {\rm rig}}(X)$ can be calculated as the derived functors of the global section functor 
on ${]{X}_{s,\bullet}[_{{\widetilde {\cal Q}^{\rm ex}}_{\bullet}}}$ of

\begin{equation}
\label{supportrigid7}
[\Omega^{\bullet}_{] { X}_{s,\bullet}[_{{\widetilde {\cal Q}^{\rm ex}}_{\bullet}}}
\rightarrow
\Omega^{\bullet}_{] {{ X}_{s,\bullet}}[_{{\widetilde{\cal Q}^{\rm ex}}_{\bullet}}} \! \! <{\widetilde{\mathscr M}_\bullet}>]. 
\end{equation}

\noindent
By our hypotheses $X_s$ is proper and ${\widetilde{\cal Q}}_{\bullet}^{\rm ex}$ is smooth,  
the first complex  calculates the rigid cohomology of $X_s$, 
while the second calculates the log-convergent cohomology of $X_s$ endowed with the induced log-structure from $X$,  
{\it i.e.}, the log-convergent cohomology of $(X_s,M_s)$  with respect to the trivial 
log-structure on the base field, $ H^{m}_{ \rm log-conv}((X_s,M_s)/{\cal V})$.

\vspace*{3mm}

\begin{rmk} \label{tub} It is tempting to  give a name to the hypercohomology 
of the global sections functor of the previous simple complex  \eqref{supportrigid7},
and to denote  it by 
$H^{m}_{X_s, {\rm rig}}({\hat X}/K)$, where $\hat X$ is the completion of $X$ along $X_s$. 
\end{rmk}

As a corollary of these two interpretations, we obtain  the following long exact sequence 
of finite dimensional $K$-vector spaces:
$$
\cdots \longrightarrow H^{m}_{X_s, {\rm rig}}(X)\,
\displaystyle{\mathop{\longrightarrow}^\alpha}\, H^{m}_{\rm rig}(X_s)\, \longrightarrow\, 
H^{m}_{ \rm log-conv}((X_s,M_s)/{\cal V})\, \longrightarrow \cdots. 
 \leqno \eqref{seqsupp}
$$
\noindent
This long exact sequence is compatible with Frobenius. 
By classical results we know that the  rigid terms of such a long exact sequence have Frobenius structures 
(they have a structure of mixed $F$-isocrystals \cite{ch}), 
hence we may endow $H^{m}_{\rm  log-conv}((X_s,M_s)/{\cal V})$ 
with a Frobenius structure (even if $(X_s, M_s)$ is 
not log-smooth over $k$ endowed with the trivial log-structure).

\vspace*{3mm}

\medskip
\begin{rmk} 
\begin{list}{}{}
\item[(1)] We should remark that we could have proved the existence of  \eqref{seqsupp} by just using the exact embedding system $ (X_{\bullet}, M_{\bullet}) \rightarrow ({\cal Q}^{\rm ex}_{\bullet}, {\mathscr M}_
{\bullet})$.  In this case, $V_{\bullet}$, of the covering above,  would have been  constructed as the complement of  tube of $X_{s,\bullet}$  of radius $\eta<1$.
\item[(2)]  More generally if we try  to deal with a general Hyodo-Kato embedding system $(X_\bullet, M_\bullet) \rightarrow ({\mathscr P}_\bullet, \mathscr M_\bullet)$ as in \eqref{HKsystem}, 
then we would have replaced  the complex  in \eqref{supportrigid6.tris} by 
\begin{equation*}
[\Omega^{\bullet}_{]{X}_{ \bullet}[_{{\mathscr P}_{\bullet}}}
\rightarrow
h_{K,  \bullet \ast}\Omega^{\bullet}_{]{{X}_{ \bullet}}[_{{\mathscr P}_{\bullet}}^{\mathrm{log}}} \! \! 
< \mathscr M_\bullet >]
\end{equation*}
where 
$h_{K,  \bullet} : ]{{X}_{ \bullet}}[_{{\mathscr P}_{\bullet}}^{\mathrm{log}} 
\rightarrow ]{{X}_{ \bullet}}[_{{\mathscr P}_{\bullet}}$ 
is the canonical morphism. As a corollary of our local global comparison, we have an isomorphism 
$$
      H^{m}_{X_s, {\rm rig}}(X) \iso 
      \R^m\Gamma(]{{X}_{s, \bullet}}[_{{\mathscr P}_{\bullet}}, 
      [\Omega^{\bullet}_{]{X}_{s, \bullet}[_{{\mathscr P}_{\bullet}}}
\rightarrow
h_{K, s, \bullet \ast}\Omega^{\bullet}_{]{{X}_{s, \bullet}}[_{{\mathscr P}_{\bullet}}^{\mathrm{log}}} \! \! 
< \mathscr M_\bullet >])
$$
where 
$h_{K, s, \bullet} : ]{{X}_{s, \bullet}}[_{{\mathscr P}_{\bullet}}^{\mathrm{log}} 
\rightarrow ]{{X}_{s, \bullet}}[_{{\mathscr P}_{\bullet}}$ is the restriction of $h_{K, \bullet}$. 
However, one can not directly apply our argument of global and local 
comparison to this complex, for the reason that 
$h_{K, \bullet}^{-1}(V_\bullet)$ is not isomorphic to $V_\bullet$ in general.
As a matter of fact,  the local existence of exact embeddings is sufficient  for proving  
 the exact sequence \eqref{seqsupp}. The authors, however, thought  that 
it  worthwhile  to prove the existence of global exact embedding systems 
of Propositions \ref{emb}, \ref{emb2} for use in  further investigations. 
\end{list}
\end{rmk}

\medskip

$$
\ast \ast \ast
$$

By putting together \eqref{seqsupp} and \eqref{rel6} we have the sequence (as in the introduction):
$$
\begin{array}{ll}
\cdots &\rightarrow H^m_{\rm rig}(X_s) \xrightarrow{\gamma} 
H^m_{\rm log-crys}((X_s,M_s)/{\cal V}^\times) \otimes K  \xrightarrow{N_m}  H^m_{\rm log-crys}((X_s,M_s)/{\cal V}^\times) \otimes K(-1) \xrightarrow{\delta} H^{m+2}_{X_s,{\rm rig}}(X) \\
&\xrightarrow{\alpha}  H^{m+2}_{\rm rig}(X_s) \rightarrow \cdots. 
\end{array} \leqno{(\ref{sequenceA})}
$$
where the maps $\gamma$ and $\delta$ are defined from \eqref{seqsupp} and \eqref{rel6}  by the composites
$$
H^m_{\rm rig}(X_s) \rightarrow  H^m_{\rm log-conv}((X_s, M_s)/{\cal V}) \rightarrow 
H^m_{\rm log-crys}((X_s,M_s)/{\cal V}^\times) \otimes K
$$
and 
$$
H^m_{\rm log-crys}((X_s,M_s)/{\cal V}^\times) \otimes K (-1) \rightarrow  H^{m+1}_{\rm log-conv}((X_s, M_s)/{\cal V}) 
\rightarrow H^{m+2}_{X_s,{\rm rig}}(X).
$$

\medskip

\bigskip

\section{Exactness of the sequence}

 In this last part we would like to prove the exactness of the previous long  sequence (as in the introduction):
$$
\begin{array}{ll}
\cdots &\rightarrow H^m_{\rm rig}(X_s) \xrightarrow{\gamma} 
H^m_{\rm log-crys}((X_s,M_s)/{\cal V}^\times) \otimes K  \xrightarrow{N_m} 
H^m_{\rm log-crys}((X_s,M_s)/{\cal V}^\times) \otimes K(-1) \xrightarrow{\delta} H^{m+2}_{X_s,{\rm rig}}(X) \\
&\xrightarrow{\alpha}  H^{m+2}_{\rm rig}(X_s) \rightarrow \cdots
\end{array} \leqno{(\ref{sequenceA})}
$$
and we will make the further  hypothesis that the field $k$ is finite with  $q=p^a$ elements. 
We will show how such a result (after our translation of the topological tools in our framework) 
is a formal corollary of the fact that in characteristic  $p$ the monodromy filtration coincides with the weight filtration. 
We recall that in the third section we  had the following interpretation \eqref{concl1}:

\begin{equation*}
(\R^mf_{(X,M)/(\mathscr C, {\mathscr N}), {\rm an} \ast}({\O}_{{\rm an},X,K})_{\mid] s[_{\mathscr C}})_{\hat s_K} 
\iso H^m_{\rm log-crys}((X_s,M_s)/{\cal V}^\times) \otimes K, 
\end{equation*}

\noindent 
where the tube $] s[_{\mathscr C}$ is isomorphic, by a choice of the lift $\hat s$ of $s$, 
to the open unit disk with a parameter $t$ corresponding to $\hat s_K$. 
The log-analytic cohomology can be represented by the trivialization \eqref{residue}:
$$
    (\R^mf_{(X,M)/(\mathscr C, {\mathscr N}){\mid] s[_{\mathscr C}}, {\rm an} \ast}({\O}_{{\rm an},X,K}), \nabla, \varphi_m) 
    \iso (V_m \otimes \O_{] s[_{\mathscr C}}, N_m \otimes 1 + 1 \otimes d, F_m \otimes \sigma_{\mathscr C} ). 
$$
where $V_m$ is a $K$-vector space endowed with a monodromy endomorphism $N_m$ 
and a Frobenius structure $F_m$ such that $qF^a_mN_m=N_mF^a_m$. Moreover, 
$V_m$ is isomorphic to $H^m_{\rm log-crys}((X_s,M_s)/{\cal V}^\times) \otimes K$ by Therorem \ref{monodromy}. 

Associated with a nilpotent operator we have a {\it monodromy} filtration on $V_m$. 
We will indicate it by ${\cal M}$ 
and the steps by ${\cal M}_{j}$, $j \in \Z$. Moreover the Frobenius $F^a_m$ 
induces a filtration called the   {\it weight} filtration. 
In this setting we have the equivalence between the two filtrations along the line of  Crew's proof \cite{crew} 10.8:

\begin{thr}
 \label{crew}  
 Under the previous hypotheses, the log-analytic cohomology  sheaf 
  $$\R ^m f_{(X,M) / (\mathscr C, {\mathscr N}), {\rm an} \ast } ({\O}_{{\rm an},X,K})$$
on ${\mathscr C}_K \setminus ]s[_{\mathscr C}$ is pure of weight $m$. 
Moreover  $\R^mf_{(X,M) / (\mathscr C, {\mathscr N}), {\rm an} \ast } ({\O}_{{\rm an}, X,K})$ 
on $] s[_{\mathscr C}$ is unipotent and, for each $j$, 
the graded part for the monodromy filtration, $gr_j^{\cal M} V_m$, is pure of  weight $m+j$ 
for the Frobenius action.
\end{thr}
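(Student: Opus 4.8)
The plan is to deduce the statement from Crew's theorem on the equivalence of the monodromy and weight filtrations for an overconvergent $F$-isocrystal on a curve over a finite field (\cite{crew}~\S10), feeding in the explicit local and ``generic'' descriptions of the sheaf $\mathcal{E}:=\R^m f_{(X,M)/(\mathscr{C},\mathscr{N}),{\rm an}\ast}(\O_{{\rm an},X,K})$ obtained in Sections~3 and~4. First I would settle purity away from $s$. On $C\setminus\{s\}$ the morphism $f$ is smooth and proper and the log structures are trivial, so $\mathcal{E}$ restricted to $\mathscr{C}_K\setminus\,]s[_{\mathscr{C}}$ is (the analytification of) the relative rigid cohomology of the smooth proper family $X\setminus X_s\to C\setminus\{s\}$, which underlies an overconvergent $F$-isocrystal. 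Since $k$ is finite, purity may be checked fibrewise at closed points: for a closed point $c\ne s$ of $C$ the fibre is $H^m_{\rm rig}(X_c)$ with $X_c$ smooth and proper over the finite field $k(c)$, hence pure of weight $m$ by Katz--Messing together with Deligne's Weil~I (comparing the characteristic polynomial of Frobenius on $p$-adic and $\ell$-adic cohomology). Thus $\mathcal{E}$ is pure of weight $m$ on $\mathscr{C}_K\setminus\,]s[_{\mathscr{C}}$, which is the first assertion.

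Next I would record unipotence near $s$. By the isomorphism \eqref{residue}, obtained through Christol's transfer theorem, the restriction $\mathcal{E}|_{]s[_{\mathscr{C}}}$ is isomorphic, together with its connection and Frobenius, to $(V_m\otimes\O_{]s[_{\mathscr{C}}},\,N_m\otimes 1+1\otimes d,\,F_m\otimes\sigma_{\mathscr{C}})$ with $N_m$ nilpotent. Filtering $V_m$ by the kernels of the powers of $N_m$ presents this object as an iterated extension of copies of the trivial connection $(\O_{]s[_{\mathscr{C}}},d)$, so $\mathcal{E}$ is unipotent on $]s[_{\mathscr{C}}$; its residue at $\hat{s}^\times_K$ is $(V_m,N_m,F_m)$, which by Theorem~\ref{monodromy} is $H^m_{\rm log-crys}((X_s,M_s)/{\cal V}^\times)\otimes K$ with its Hyodo--Kato monodromy operator and Frobenius. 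Note the relation $qF_m^aN_m=N_mF_m^a$ obtained by iterating $N_mF_m=pF_mN_m$: it says that $N_m$ sends the $\mu$-eigenspace of $F_m^a$ to the $q^{-1}\mu$-eigenspace, i.e. lowers weights by $2$, matching $N_m\colon V_m\to V_m(-1)$.

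Now I would apply \cite{crew}~10.8. Its hypotheses are exactly what the previous two paragraphs provide: over the finite field $k$ we have an overconvergent $F$-isocrystal on $\mathscr{C}_K\setminus\,]s[_{\mathscr{C}}$ which is pure of weight $m$ and has unipotent local monodromy at $\hat{s}_K$ with residue $(V_m,N_m,F_m^a)$. Crew's theorem --- the $p$-adic differential counterpart of Deligne's argument \cite{de}~1.8.4, in which purity of the intermediate extension of a pure object across the puncture forces the weight filtration on the local representation to coincide with the monodromy one --- then yields that the monodromy filtration ${\cal M}$ of $(V_m,N_m)$ coincides, after the shift by $m$ dictated by purity on the complement, with the weight filtration defined by $F_m^a$; equivalently $gr_j^{\cal M}V_m$ is pure of weight $m+j$ for each $j$. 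This is the second assertion.

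The part where the real work lies --- and it is work imported from \cite{crew} --- is this last step: one must verify that $\mathcal{E}$ genuinely fits the framework of \cite{crew}~\S10 (overconvergence on $\mathscr{C}_K\setminus\,]s[_{\mathscr{C}}$, unipotence at $\hat{s}_K$, and the normalisations of Frobenius and of the Tate twist so that ``$qF_m^aN_m=N_mF_m^a$'' is Crew's convention, with the weight filtration on the residue being the one induced by Frobenius), after which the coincidence of the two filtrations is Crew's theorem. The first two steps, by contrast, are formal consequences of the results of Sections~3--4 and of the classical purity of the cohomology of smooth proper varieties over finite fields.
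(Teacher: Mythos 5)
Your proposal is correct and follows essentially the same route as the paper: identify the sheaf away from $]s[_{\mathscr C}$ with the rigid cohomology of the smooth proper fibers (using its local freeness/base change) to get purity of weight $m$, use the trivialization \eqref{residue} and Theorem \ref{monodromy} for unipotence and the residue $(V_m,N_m,F_m)$, and then invoke Crew's theorem \cite{crew} 10.8. The paper's proof is just a terser version of the same argument; your added details (fibrewise purity via Katz--Messing, the check of Crew's hypotheses and normalizations) fill in exactly what the paper leaves implicit.
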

\begin{proof} In view of our hypotheses the family $X \rightarrow C$ was given by a 
proper and log-smooth morphism with only one  (classically) singular fiber at $s$. 
Since $\R^mf_{(X,M) / (\mathscr C, {\mathscr N}), {\rm an} \ast } ({\O}_{{\rm an}, X,K})$ 
is locally free by Theorem \ref{relan}, 
$\R^mf_{(X,M) / (\mathscr C, {\mathscr N}), {\rm an} \ast } ({\O}_{{\rm an}, X,K})$ 
calculates the rigid cohomology of the proper and smooth fibers except $X_s$. 
Hence they are pure for the Frobenius structure. Now we can apply Crew's theorem, {\it ibid.}
\end{proof}

\noindent 
This gives an equivalence  between monodromy and weight filtrations (up to  a shift). 

\begin{crl}  \label{weight} On  $H^m_{\rm log-crys}((X_s,M_s)/{\cal V}^\times) \otimes K$ the $\mathrm{Ker}\, N_m$ 
has (Frobenius) weights less  than or equal to $m$. 
While $H^m_{\rm log-crys}((X_s,M_s)/{\cal V}^\times) \otimes K (-1)/\mathrm{Im}\, N_m$ has 
(Frobenius) weights greater  than or equal to $m+2$.
\end{crl}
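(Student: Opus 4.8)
The plan is to deduce the corollary formally from Theorem \ref{crew} together with two elementary facts about the monodromy filtration of a nilpotent operator. Write $N = N_m$, $V = V_m$, and let $\mathcal M = (\mathcal M_j)_{j\in\Z}$ be the monodromy filtration of $N$ on $V$, characterized by $N\mathcal M_j \subset \mathcal M_{j-2}$ and $N^j : gr^{\mathcal M}_j V \xrightarrow{\sim} gr^{\mathcal M}_{-j}V$ for $j \ge 0$. First I would note that $\mathcal M$ is stable under the Frobenius $F_m$: the filtration is canonically attached to $N$, and the relation $N_mF_m = pF_mN_m$ shows that $F_m$ intertwines $N_m$ with $p^{-1}N_m$, which has the same monodromy filtration; likewise $\Ker N_m$ and $\img N_m$ are $F_m$-stable. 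By Theorem \ref{crew}, $gr^{\mathcal M}_j V$ is pure of weight $m+j$ for every $j$; equivalently, the Frobenius weight filtration on $V$ is $\mathcal M$ shifted up by $m$.

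Next I would invoke the inclusions $\Ker N \subset \mathcal M_0$ and $\img N \supset \mathcal M_{-1}$, valid for any nilpotent endomorphism of a finite-dimensional vector space. The first is immediate from the definition of $\mathcal M$: if $x \in \Ker N$ were not in $\mathcal M_0$, choose the least $j > 0$ with $x \in \mathcal M_j$; then $0 \ne \bar x \in gr^{\mathcal M}_j V$ but $N^j\bar x = \overline{N^jx} = 0$, contradicting the injectivity of $N^j$ on $gr^{\mathcal M}_j V$. The second inclusion is the linear-algebra transpose of the first, obtained by applying it to $N\dual$ on $V\dual$ and using $\mathcal M_j(N\dual) = \mathcal M_{-1-j}(N)^\perp$ (it can also be checked directly on a Jordan decomposition).

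Granting these, the two assertions follow quickly. For the first: $\Ker N_m$ is an $F_m$-submodule of $\mathcal M_0V$, and $\mathcal M_0V$ carries an $F_m$-stable filtration whose graded pieces are the $gr^{\mathcal M}_jV$ with $j\le 0$, each pure of weight $m+j\le m$; hence $\Ker N_m$ has Frobenius weights $\le m$. For the second: the Tate twist $H^m_{\rm log-crys}((X_s,M_s)/{\cal V}^\times)\ox K(-1)$ is $V$ with the Frobenius rescaled so that $gr^{\mathcal M}_j$ acquires weight $m+j+2$; since $\img N_m\supset\mathcal M_{-1}V$, the quotient $V\ox K(-1)/\img N_m$ is a quotient of $V\ox K(-1)/\mathcal M_{-1}V$, and the latter has an $F_m$-stable filtration with graded pieces $gr^{\mathcal M}_jV$, $j\ge 0$, each of weight $m+j+2\ge m+2$; hence $V\ox K(-1)/\img N_m$ has Frobenius weights $\ge m+2$. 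I do not expect a genuine obstacle: the real content is already in Theorem \ref{crew}, and the rest is the standard dictionary between monodromy and weight filtrations. The only things to watch are the $+2$ shift coming from the twist $K(-1)$ and checking that all the kernels, images and filtration steps involved are honest Frobenius-stable sub- and quotient-objects, so that it makes sense to speak of their weights.
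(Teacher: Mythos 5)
Your proof is correct and follows essentially the same route as the paper: both arguments reduce the statement to the two inclusions $\Ker N_m\subset\mathcal M_0$ and $\mathcal M_{-1}\subset\mathrm{Im}\,N_m$, and then read off the weights from Theorem \ref{crew} (graded piece $gr_j^{\mathcal M}V_m$ pure of weight $m+j$) together with the shift by $2$ coming from the twist $(-1)$. The only difference is cosmetic: the paper cites Deligne 1.6 for the injectivity of $N_m$ on $V_m/\mathcal M_0$ and the identity $N_m\mathcal M_{j+2}=\mathcal M_j$ for $j<0$, whereas you verify these inclusions (and the Frobenius-stability of $\mathcal M$, $\Ker N_m$, $\mathrm{Im}\,N_m$) by elementary linear algebra.
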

\begin{proof} By Deligne \cite{de}1.6, we know that $N_m$ is injective on $V_m/{\cal M}_0 \rightarrow V_m/{\cal M}_{-2}(-1)$, 
hence $\mathrm{Ker}\, N_m \subset {\cal M}_0$. 
On the other hand for each $j<0$ , $N_m{\cal M}_{j+2}={\cal M}_{j}(-1)$: 
hence $ {\cal M}_{-1} (-1)\subset \mathrm{Im}\, N_m$. We conclude that  $V_m(-1)/\mathrm{Im}\, N_m$ 
has weights bigger than or equal to $m+2$.
\end{proof}

 \begin{crl}  \label{direct-sum}  The Frobenius induces a structure of mixed  $F^a$-isocrystal 
 on $H^m_{\rm log-conv}((X_s, M_s)/{\cal V})$ {\rm (\cite{ch})}. Moreover we have a direct sum decomposition 
 with respect to the weights
 $$
H^m_{\rm log-conv}((X_s, M_s)/{\cal V})= H^m_{\rm log-conv}((X_s, M_s)/{\cal V})^{\leq m} 
\oplus H^m_{\rm log-conv}((X_s, M_s)/{\cal V})^{>m}.
$$
\end{crl}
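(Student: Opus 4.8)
The plan is to read off both assertions from the two long exact sequences \eqref{seqsupp} and \eqref{rel6} now at our disposal, combined with Corollary \ref{weight} and the structure theory of mixed $F^a$-isocrystals over a finite field. Recall that the Frobenius structure on $H^m_{\rm log-conv}((X_s,M_s)/{\cal V})$ was defined precisely so that \eqref{seqsupp} is a Frobenius-compatible exact sequence of $K$-vector spaces; thus two things remain to be checked: first, that this Frobenius structure is that of a \emph{mixed} $F^a$-isocrystal, and second, that its weight filtration splits into the weight-$\le m$ and weight-$>m$ parts.

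For the first point I would use \eqref{seqsupp} to present $H^m_{\rm log-conv}((X_s,M_s)/{\cal V})$ as an extension, in the abelian category of $F^a$-isocrystals over $k$, of $\Ker\!\big(H^{m+1}_{X_s,{\rm rig}}(X)\xrightarrow{\alpha}H^{m+1}_{\rm rig}(X_s)\big)$ by $\Coker\!\big(H^m_{X_s,{\rm rig}}(X)\xrightarrow{\alpha}H^m_{\rm rig}(X_s)\big)$. All the rigid cohomologies appearing here, with and without support, of the $k$-varieties involved are mixed $F^a$-isocrystals by \cite{ch}; since the mixed objects form an abelian subcategory stable under subquotients and extensions, $H^m_{\rm log-conv}((X_s,M_s)/{\cal V})$ is mixed. (One could also run this argument through \eqref{rel6}, using the mixedness of $H^\ast_{\rm log-crys}((X_s,M_s)/{\cal V}^\times)\otimes K$, which is implicit in Theorem \ref{crew}.)

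For the second point, the underlying fact is that over the finite field $k$ the linearized Frobenius $F^a$ acts $K$-linearly on a mixed $F^a$-isocrystal, and grouping its generalized eigenspaces according to the weight of the eigenvalue yields a canonical direct sum decomposition by weight, so that the sum of the parts of weight $\le m$ and the sum of the parts of weight $>m$ give the claimed splitting. To identify these pieces with the natural ones I would combine Corollary \ref{weight} with \eqref{rel6}, which furnishes the short exact sequence
\[
0 \longrightarrow \big(H^{m-1}_{\rm log-crys}((X_s,M_s)/{\cal V}^\times)\otimes K(-1)\big)\big/\mathrm{Im}\, N_{m-1} \longrightarrow H^m_{\rm log-conv}((X_s,M_s)/{\cal V}) \longrightarrow \Ker N_m \longrightarrow 0,
\]
whose sub has Frobenius weights $\ge m+1$ and whose quotient has Frobenius weights $\le m$ by Corollary \ref{weight}. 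Since $\Ext^1$ between an object of weights $\le m$ and one of weights $\ge m+1$ vanishes in the category of mixed $F^a$-isocrystals over a finite field — the Frobenius has no nonzero invariants in the relevant $\Hom$-space, whose Frobenius eigenvalues all have absolute value $\ge q^{1/2}>1$ — this sequence splits, matching the sub with $H^m_{\rm log-conv}((X_s,M_s)/{\cal V})^{>m}$ and the quotient with $H^m_{\rm log-conv}((X_s,M_s)/{\cal V})^{\le m}$. The one point where finiteness of $k$ is genuinely used — and hence the only step that is not a formal consequence of the exact sequences and Corollary \ref{weight} — is this $\Ext^1$-vanishing across a weight gap, equivalently the canonical splitting of the weight filtration.
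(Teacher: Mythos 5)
Your proposal is correct, and its first half (mixedness) is the paper's argument: present $H^m_{\rm log-conv}((X_s,M_s)/{\cal V})$ as an extension of mixed $F^a$-isocrystals using the Frobenius-compatible sequences \eqref{seqsupp} and \eqref{rel6} together with \cite{ch}. Where you diverge is the splitting. The paper uses \emph{both} sequences: from \eqref{seqsupp} it exhibits a short exact sequence whose sub is a quotient of $H^m_{\rm rig}(X_s)$ (weights $\leq m$) and whose quotient is a sub of $H^{m+1}_{X_s,\rm rig}(X)$ (weights $>m$), and from \eqref{rel6} the opposite configuration, with sub $\Coker N_{m-1}$ of weights $\geq m+1$ and quotient $\Ker N_m$ of weights $\leq m$ via Corollary \ref{weight}; the two subobjects then realize the two weight pieces and give the decomposition. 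You instead use only the \eqref{rel6} sequence and split it by the vanishing of $\Ext^1$ across a weight gap in the category of $F^a$-isocrystals over a finite field (equivalently, by the canonical generalized-eigenspace decomposition of the $K$-linear operator $F^a$ grouped by weight); note only that what controls $\Ext^1$ here is the cokernel of $\varphi-1$ on the internal $\Hom$, not its invariants, though in finite dimension the absence of the eigenvalue $1$ kills both at once, so your conclusion stands. Each route buys something: yours isolates exactly where finiteness of $k$ enters and makes clear that, once mixedness is known, the decomposition is automatic; the paper's double-sequence argument has the advantage of directly tagging the summand $H^m_{\rm log-conv}((X_s,M_s)/{\cal V})^{\leq m}$ as the image of $H^m_{\rm rig}(X_s)$ and the summand of weights $>m$ as the image of $\Coker N_{m-1}$, which is precisely the form in which the corollary is exploited in the final exactness argument (in your setup that identification on the \eqref{seqsupp} side would have to be re-derived there, as indeed the paper also does).
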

 
 \begin{proof} The cohomological  groups $H^m_{\rm log-conv}((X_s, M_s)/{\cal V})$  sit  in two long exact sequences  \eqref{seqsupp} and \eqref{rel6}  both compatible with Frobenius:  hence they are mixed as $F^a$-isocrystals. 
 In \eqref{seqsupp},  by  the theory of "classical" rigid cohomology we know that $H^m_{\rm rig}(X_s)$ 
 has weights $\leq  \!  \! m$  (\cite{cl} 2.2) while  $H^{m+1}_{X_s,{\rm rig}}(X)$ has weights strictly bigger than $m$ (\cite{ch}, 2.3): 
 then one can insert   $H^m_{\rm log-conv}((X_s, M_s)/{\cal V})$ in a short exact sequence where the first non trivial term 
 has weights $\leq \!    m$ (in fact  it is a quotient of $H_{\rm rig}^m(X_s)$), while the last non trivial term has weights $> \! m$ 
 (because it is a sub $F^a$-isocrystal of $H^{m+1}_{X_s, {\rm rig}}(X)$).  But, using \eqref{rel6}, 
 we can insert $H^m_{\rm log-conv}((X_s, M_s)/{\cal V})$  in another short exact sequence : 
 this time the first non trivial term is the quotient  
 $H^{m-1}_{\rm log-crys}((X_s,M_s)/{\cal V}^\times) \otimes K (-1)/\mathrm{Im}\, N_{m-1}$ 
 which has weights $> \! m$ by Corollary \ref{weight} and the last term is $\mathrm{Ker}\, N_m$ 
 on  $H^m_{\rm log-crys}((X_s,M_s)/{\cal V}^\times) \otimes K$ 
 which has weights $\leq  \! m$ (again by Corollary \ref{weight}).   
 \end{proof}

We are now ready for the last step in the proof of exactness of the Clemens-Schmid sequence. 
As we said, the   part of the long exact sequence  \eqref{seqsupp}
$$
 \dots \rightarrow  H^{m}_{\rm rig}(X_s) 
 \rightarrow H^{m}_{ \rm log-conv}((X_s,M_s)/{\cal V}) \rightarrow H^{m+1}_{X_s, {\rm rig}}(X) \rightarrow \cdots
$$
is compatible with the Frobenius structure, hence it gives a surjection

\begin{equation*} 
  H^{m}_{\rm rig}(X_s)= H^{m}_{\rm rig}(X_s)^{\leq m} \rightarrow H^{m}_{ \rm log-conv}((X_s,M_s)/{\cal V})^{\leq m}.
 \end{equation*}
 
 \noindent
We then have  $\mathrm{Im}(H^{m}_{\rm rig}(X_s)\rightarrow 
H^m_{\rm log-conv}((X_s,M_s)/{\cal V}))= H^{m}_{\rm log-conv}((X_s,M_s)/{\cal V})^{\leq m}$ 
and of course it is contained in $\mathrm{Ker}\, N_m$ according to the long exact sequence \eqref{rel6}.  
Since $\mathrm{Ker}\, N_m$ has weights at most  $m$  and the $\mathrm{Coker}\, N_{m-1}$ has weights $\geq m+1$, 
then the kernel is exactly  isomorphic to $ H^{m}_{\rm  log-conv}((X_s,M_s)/{\cal V} )^{\leq m}$ under the map in \eqref{rel6}. 
Hence  it is  $\mathrm{Im}(H^{m}_{\rm rig}(X_s) \rightarrow H^m_{\rm log-crys}((X_s,M)/{\cal V}^\times) \otimes K)$ 
in $\gamma$ of  \eqref{sequenceA}. In the second part of the sequence, we know that $\mathrm{Coker}\, N_m$ 
has weights $\geq m+2$ (because of the Frobenius twist).  
And it is isomorphic to $H^{m+1}_{\rm  log-conv}(X_s,M_s)/{\cal V} )^{\geq m+2}$ 
because  $\mathrm{Ker}\, N_{m+1}$ has weights $\leq m+1$. Consider 
the long exact sequence \eqref{seqsupp}

$$
H^{m+1}_{\rm  log-conv}((X_s,M_s)/{\cal V} ) 
 \rightarrow H^{m+2}_{X_s,{\rm  rig}}(X) \rightarrow H^{m+2}_{\rm rig}(X_s).
$$

\noindent 
The kernel of the last map is the isomorphic to   $H^{m+1}_{\rm log-conv}((X_s,M_s)/{\cal V})^{\geq m+2}$ 
because $ H^{m+1}_{\rm rig}(X_s)$ has weights $\leq m+1$ and  $H^{m+2}_{X_s, {\rm rig}}(X)$ has weights $\geq m+2$. 
 
This concludes the proof of the exactness of the Clemens-Schmid sequence.

\addcontentsline{toc}{section}{References}

	Bruno Chiarellotto, Dip. Matematica, Univ. Padova, Via Trieste 63, 35121 Padova (Italy), email: chiarbru@math.unipd.it
	
	Nobuo Tsuzuki, Mathematical Inst. Tohoku Univ.,  6-3, Aoba, Aramaki, Aoba-Ku, Sendai 980-8578, email: tsuzuki@math.tohoku.ac.jp
\end{document}